\documentclass[oneside,reqno,11pt]{amsart}
\usepackage{epsfig}
\usepackage{color}
\usepackage{amssymb,amsmath,amsthm,amstext,amsfonts}
\usepackage[normalem]{ulem}
\usepackage{amsmath,amscd,amstext,amsthm,amsfonts,latexsym}
\usepackage[colorlinks=true,linkcolor=blue, urlcolor=black, citecolor=blue]{hyperref}
\hypersetup{
 citecolor=blue}
\theoremstyle{Theorem A}
\theoremstyle{Theorem B}
\theoremstyle{Theorem C}
\theoremstyle{Theorem D}
\theoremstyle{Theorem E}

\makeatletter

\theoremstyle{definition}
\newtheorem{maintheorem}{Theorem}

\newtheorem{maincorollary}[maintheorem]{Corollary}

\numberwithin{equation}{section}
\numberwithin{figure}{section}
\theoremstyle{plain}
\newtheorem*{cor*}{\protect\corollaryname}
\theoremstyle{plain}
\newtheorem{thm}{\protect\theoremname}[section]
\theoremstyle{definition}
\newtheorem{defn}[thm]{\protect\definitionname}
\theoremstyle{question}

\theoremstyle{remark}
\newtheorem{rem}[thm]{\protect\remarkname}
\theoremstyle{plain}
\newtheorem{prop}[thm]{\protect\propositionname}
\theoremstyle{plain}
\newtheorem{lem}[thm]{\protect\lemmaname}
\theoremstyle{plain}
\newtheorem{cor}[thm]{\protect\corollaryname}

\newtheorem{Conjecture}[thm]{Conjecture}



\usepackage{geometry}
\setcounter{secnumdepth}{3}
\setcounter{tocdepth}{3}
\usepackage{amsmath}

\numberwithin{equation}{section}
\numberwithin{figure}{section}
\usepackage{enumitem}		
 \let\footnote=\endnote
\@ifundefined{lettrine}{\usepackage{lettrine}}{}


\theoremstyle{definition}


\def\L{\Lambda}

\def\R{\mathbb{R}}

\def\N{\mathbb{N}}
\def\Z{\mathbb{Z}}

\def\ep{\varepsilon}

\usepackage{float}

\keywords{}

\subjclass[2000]{}

\def\Si{\Sigma}

\def\R{\mathbb{R}}
\def\P{\mathbb{P}\R^d}

\def\loc{\text{loc}}

\def\ep{\varepsilon}
\def\vep{\varepsilon}
\def\A{\mathcal{A}}

\def\E{\mathcal{E}}
\def\L{\mathcal{L}}
\def\M{\mathcal{M}_{\text{inv}}}

\def\glr{\text{GL}(d,\R)}

\makeatother

  \providecommand{\corollaryname}{Corollary}
  \providecommand{\definitionname}{Definition}
  \providecommand{\lemmaname}{Lemma}
  \providecommand{\propositionname}{Proposition}
  \providecommand{\remarkname}{Remark}
  \providecommand{\theoremname}{Theorem}
\providecommand{\theoremname}{Theorem}

\usepackage{tikz,xcolor,hyperref}

\definecolor{lime}{HTML}{A6CE39}
\DeclareRobustCommand{\orcidicon}{
	\begin{tikzpicture}
	\draw[lime, fill=lime] (0,0) 
	circle [radius=0.16] 
	node[white] {{\fontfamily{qag}\selectfont \tiny ID}};
	\draw[white, fill=white] (-0.0625,0.095) 
	circle [radius=0.007];
	\end{tikzpicture}
	\hspace{-2mm}
}
\foreach \x in {A, ..., Z}{\expandafter\xdef\csname orcid\x\endcsname{\noexpand\href{https://orcid.org/\csname orcidauthor\x\endcsname}
			{\noexpand\orcidicon}}
}

\renewcommand{\le}{\leqslant}
\renewcommand{\leq}{\leqslant}
\renewcommand{\geq}{\geqslant}
\renewcommand{\ge}{\geqslant}

\author{Reza Mohammadpour \orcidA{} and Paulo Varandas
\orcidB{}}

\address{Reza Mohammadpour, Department of Mathematics, Uppsala University, Box 480, SE-75106, Uppsala, Sweden.}
\email{reza.mohammadpour@math.uu.se}

\address{Paulo Varandas, Departamento de Matemática
da Universidade de Aveiro
Campus Universitário de Santiago, Portugal.}
\email{paulo.varandas@ua.pt}

\date{\today}

\subjclass[2020]{37A50, 37C40, 37D25, 37D35}
\keywords{
Fiber-bunched matrix cocycles; thermodynamic formalism; Anosov diffeomorphisms; $\alpha$-bunched repellers; $\psi$-mixing; weak Bernoulli.
}

\begin{document}

\title[Statistical properties of equilibrium states for fiber-bunched cocycles]{Statistical properties of equilibrium states for fiber-bunched matrix cocycles and applications}

\maketitle

\begin{abstract}
We contribute to the thermodynamic formalism of H\"older continuous fiber-bunched matrix cocycles, Anosov diffeomorphisms, and hyperbolic repellers. Specifically, we prove that $1$-typical fiber-bunched cocycles $\mathcal{A}$ over topologically mixing subshifts of finite type admit a unique Gibbs equilibrium state $\mu_t$ associated with the non-additive family of potentials $\{t \log \|\mathcal{A}^n\|\}_{n \in \mathbb{N}}$, for a range of parameters $t \in (-t_*, +\infty)$, where $t_* > 0$. Furthermore, these equilibrium states are $\psi$-mixing, therefore weak Bernoulli. In addition, these results allow us to derive consequences for the thermodynamic formalism of open sets of hyperbolic repellers and Anosov diffeomorphisms. In particular, it provides a positive answer to a conjecture posed by Gatzouras and Peres \cite{Gatzouras1997} for $C^1$-open sets of $\alpha$-fiber-bunched hyperbolic repellers.

\end{abstract}


\color{black}
\section{Introduction}

\color{black}

The study of equilibrium states for matrix cocycles has seen significant advances in recent years, particularly through the lens of thermodynamic formalism.  Let $(\Sigma, \sigma)$ be a topologically mixing subshift of finite type and let $\M(\sigma)$ denote the space of $\sigma$-invariant probability measures. Given a matrix cocycle $\A:\Sigma \to \glr$ over $(\Sigma,\sigma)$ and $t\in \R$, we consider the family of potentials $t\Phi_{\A}:=\{t\log \|\A^n\|\}_{n \in \N}$, where
$$
\mathcal A^n(x) = \A(\sigma^{n-1}(x)) \cdot \dots \cdot \A(\sigma(x))  \cdot \A(x)
$$
for each $n\ge 1$ and $x\in \Sigma$ and denote the top Lyapunov exponent of a $\sigma$-invariant probability measure $\mu$ by 
$$
\lambda_1(\mu,\A)=\lim_{n\to\infty} \frac1n \int \log \|\A^n(x)\|\, d\mu(x).
$$
By \cite{CFH, CPZ}, the topological pressure of the potential $t\Phi_{\A}$ is given by
\[ P(\sigma, t\Phi_{\A})=\sup_{\eta\in  \M(\sigma)} 
\Big\{
h_\eta(\sigma)+t \,\lambda_1(\eta,\A)
\Big\},\]
where $h_\eta(\sigma)$ is the measure theoretical entropy.
The family of potentials $t\Phi_{\A}$ is a well-known to be sub-additive 
whenever $t\geq 0$, meaning that
$ 
t\log \|\A^{m+n}(x)\|\le t\log \|\A^{n}(x)\|+ t\log \|\A^{m}(\sigma^n(x))\|
$ 
for every $x\in \Sigma$ and $m,n\in \N$. In this context, there are always equilibrium states, that is, probability measures $\mu$ such that 
$$
h_\mu(\sigma)+t \,\lambda_1(\mu,\A)  = P(\sigma, t\Phi_{\A})
$$
as a consequence of the upper semi-continuity of both the entropy map $\mu\mapsto h_{\mu}(\sigma)$ and $\mu\mapsto\lambda_1(\mu,\A)$ and 
weak$^*$-compactness of $\M(\sigma)$
(see e.g. \cite{Bar, CFH}).
For $t\geq 0$, a key concept in the theory is the quasi-multiplicativity property. 
Feng \cite{Feng11} established that this property implies the existence of a unique Gibbs measure $\mu_t$ for the potential $t\Phi_{\mathcal{A}}$ when $t \geq 0$. Park \cite{park2020quasi} showed that 1-typical cocycles satisfy quasi-multiplicativity, making them a natural class for which Gibbs measures exist for non-negative parameters.
Apart from existence and uniqueness, the statistical properties of such equilibrium states are not yet fully understood. Some statistical properties, including large deviation principles and the central limit theorem, have been established (see \cite{GouezelStoyanov2019,Park-Piraino} and references therein).  Moreover, for one-step cocycles, Piraino~\cite{Piraino20} proved that all such equilibrium states are weak Bernoulli under strong irreducibility and proximality assumptions, while Morris~\cite{Morris21} showed that if an equilibrium state is totally ergodic, then it is $\psi$-mixing and therefore weak Bernoulli. Recently, the first-named author and Park~\cite{MP-uniform-qm} showed that the equilibrium state is totally ergodic, $\psi$-mixing, and hence weak Bernoulli, for one-step cocycles under the irreducibility assumption.

\smallskip
The behavior of the family of potentials $t\Phi_\A$ for $t < 0$ diverges significantly and is much less understood. In fact, if $t<0$ the family $t\Phi_\A$ is super-additive, meaning that  
$$ 
t\log \|\A^{m+n}(x)\|\ge t\log \|\A^{n}(x)\|+ t\log \|\A^{m}(\sigma^n(x))\|
 $$
for every $x\in \Sigma$ and $m,n\in \N$, and equilibrium states may fail to exist. Beyond the simpler cases of diagonal matrices, positive matrices, and dominated cocycles,
where equilibrium states are known to exist and are Gibbs measures, few results are known. One notable exception is the work of Rush \cite{Rush} for one-step cocycles, who employed perturbation techniques and the transfer operator developed in \cite{Park-Piraino} to prove the existence of a unique Gibbs measure ${\mu_t}$ for the potential $t\Phi_{\mathcal{A}}$ when $t < 0$ is sufficiently close to zero, under the mild assumption of 1-typicality.
Quite recently, the first-named author and Quas \cite{mohammadpour-Quas} studied phase transitions for one-step cocycles and proved that there exists a strongly irreducible and proximal one-step cocycle $\A$ and $t<0$ so that the potential $t\Phi_{\A}$ has more than one equilibrium state. This result emphasizes that phase transitions for negative $t$ can appear even in the case of one-step cocycles.  
No results seem to have been proved concerning the thermodynamic formalism of non-locally constant H\"older continuous cocycles for negative parameters $t$.

\smallskip
In this paper, we aim at contributing to the thermodynamic formalism of linear cocycles over a subshift of finite type $(\Sigma,\sigma)$, beyond the context of one-step cocycles. We will always consider fiber-bunched linear cocycles $\A$, which roughly means that the projective action is dominated by the hyperbolicity constants of the shift map
(alternatively a fiber-bunched cocycle is one such that its projectivized skew products $F_{\mathcal{A}}: \Sigma \times \P \rightarrow \Sigma \times \P$ defined by ~\eqref{skew-product} are partially hyperbolic).
We shall assume that the fiber-bunched  H\"older continuous cocycle satisfies the generic assumptions (pinching and twisting) introduced by Bonatti and Viana \cite{bonatti2004lyapunov} and further used by Avila and Viana \cite{avila2007simplicity}. The space of these cocycles, referred to as $1$-typical (cf. Subsection \ref{subsec:typical} for the precise definitions), is a $C^0$-open and dense subset of the space of fiber-bunched H\"older continuous cocycles.

\smallskip
The first results concern the existence and uniqueness of equilibrium states, as well as the regularity of Lyapunov exponents. We prove that there exists $t_* > 0$ such that, for every $t \in (-t_*, +\infty)$, the potential $t\Phi_{\A}$ has a unique equilibrium state $\mu_t$, which is a Gibbs measure, and whose top Lyapunov exponent depends analytically on the parameter $t$ in an open interval containing the origin (cf. Theorem~\ref{thm:Main1}).
We also prove similar results on the analyticity for the sum of $k$ largest Lyapunov exponents for each $1\le k \le d$ in the case of typical cocycles (cf. Corollary~\ref{cor:Main1}). 
The equilibrium states in Theorem~\ref{thm:Main1} are obtained as the natural extension of one-sided shift invariant probability measures $\mu_t$, defined using leading eigenvalues and leading eigenvectors of suitable transfer operators $\L_t$ given by 
\begin{equation*}
\mathcal{L}_t f(x, {u}):=\sum_{y \in \sigma^{-1}x} g(y)\left\|\mathcal{A}(y)^* \frac{u}{\|u\|}\right\|^t f\left(y, \overline{\mathcal{A}(y)^* u}\right),
\end{equation*}
 and acting on the space of real valued H\"older continuous functions $f$ on $\Sigma \times \P$ (we refer the reader to Sections~\ref{sec: Transfer} and ~\ref{sec:Gibbs-eq-states} for details).
This extension of the results by Rush \cite{Rush} to the broader setting of fiber-bunched cocycles introduces significant new challenges. In fact, as in \cite{Rush}, the author deals with one-step cocycles, the associated transfer operator—similar to the classical one introduced in \cite{lepage1982theoremes}—depends only on a finite set of matrices. Consequently, the stationary measure $\nu_t$ (the leading eigenmeasure of $\mathcal{L}_t$) depends solely on $\P$, an important fact that plays a key role in the proof that ${\mu}_t$ satisfies the Gibbs property.
Unlike one-step cocycles, which form a basic class of matrix cocycles, fiber-bunched matrix cocycles constitute a natural and significantly richer class.
In this context, the stationary measures $\nu_t$ still exist but depend on the entire product space $\Sigma \times \P$.  To handle this additional complexity, we follow a different approach, such as the one used by Bonatti and Viana \cite{bonatti2004lyapunov}. 

\smallskip
Subsequently, we study the statistical properties of equilibrium states in greater detail.
In fact, we prove that the equilibrium state ${\mu}_t$ is $\psi$-mixing and hence weak Bernoulli for each $t \in (-t_{\ast}, \infty)$ (we refer the reader to Theorem~\ref{thm:Main2} for the precise statements).  Note that $\psi$-mixing is stronger than usual mixing (see \cite{Bradley05}). Our result extends the work of Call and Park \cite{Call2023}, who proved that the equilibrium state ${\mu}_t$ is weak Bernoulli for $t \geq 0$.

\smallskip
The previous findings also carry substantial implications for smooth dynamical systems $T$ on a compact manifold $M$, as we now describe.
Due to the existence of finite Markov partitions, a uniformly hyperbolic map is semiconjugate to a subshift of finite type. Using the H\"older continuity of the semiconjugacies one can identify the derivative cocycle $DT\mid_F$ over a $DT$-invariant subbundle $F\subset TM$ (see e.g. \cite{Mohammadpour-Varandas}).
This leads to the natural question about the thermodynamic formalism for the family of potentials $t\log \|DT^{n}\mid_{F}\|$ when $t \in \R$. 
In the case where $DT\mid_F$ is conformal, the problem reduces to studying equilibrium states for Hölder continuous potentials, whose theoretical foundations are well established (see e.g. \cite{baladi2000positive, Bow, dolgopyat1998decay, liverani1995decay, viana2016foundations, ruelle2004thermodynamic}). 
In the general (non-conformal) case, while the family $t\log \|DT^{n}\mid_F\|$ is non-additive, one can apply the results described above for linear cocycles. These results establish the existence of Gibbs-type states and their mixing properties for certain non-additive potentials, extending beyond the strongly conformal, reducible, and dominated settings.
This has important applications for hyperbolic repellers and Anosov diffeomorphisms, as we now discuss.   
There is a well-known conjecture by D. Gatzouras and Y. Peres \cite{Gatzouras1997}:

\smallskip
\begin{Conjecture}\label{conjecture} \emph{Let $T: M \rightarrow M$ be an expanding map, and let $K \subseteq M$ be a compact invariant set that satisfies the specification property. Then there exists a unique ergodic  $T$-invariant
measure with the same Hausdorff dimension as $K$. Moreover, it is mixing for $T$ and, possibly, measurably isomorphic to a Bernoulli shift.}
\end{Conjecture}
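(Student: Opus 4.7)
The strategy is to realize the measure of maximal Hausdorff dimension as an equilibrium state $\mu_{-s}$ for a negative parameter $-s<0$ of the non-additive family $-s\,\Phi_{\A}$ associated with the derivative cocycle $\A:=DT$, and then to read off existence, uniqueness, and statistical properties from Theorems~\ref{thm:Main1} and~\ref{thm:Main2}. First I would fix a Markov partition for $T\mid_K$ to obtain a H\"older semiconjugacy $\pi\colon(\Sigma,\sigma)\to(K,T)$ from a topologically mixing subshift of finite type; the derivative of $T$ lifts to a H\"older continuous matrix cocycle $\A$ over $\sigma$, and the $\alpha$-fiber-bunching hypothesis on $T$ translates precisely into fiber-bunching of $\A$ in the sense of the paper. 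On a $C^1$-open and dense subset one may further require that $\A$ be $1$-typical, so that the hypotheses of Theorem~\ref{thm:Main1} are satisfied.

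Next I would invoke a non-conformal Bowen-type equation for the Hausdorff dimension of hyperbolic repellers (in the spirit of Bowen, Falconer, and Barreira--Gelfert): in the fiber-bunched setting, $\dim_H(K)$ is characterized as the unique positive root $s$ of
\[
P(\sigma,-s\,\Phi_{\A})=0.
\]
The specification property, inherited automatically from $(\Sigma,\sigma)$, guarantees that the dimension variational principle $\dim_H(K)=\sup\{\dim_H(\mu):\mu\in\M(\sigma)\ \text{ergodic}\}$ holds. The crucial step is then to verify that $-s\in(-t_*,+\infty)$, with $t_*>0$ the constant provided by Theorem~\ref{thm:Main1}: this is where the $\alpha$-fiber-bunched hypothesis enters essentially, constraining how far $\A$ may deviate from a conformal cocycle and thereby keeping the Bowen root inside the analyticity range of the pressure function. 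Once this is checked, Theorem~\ref{thm:Main1} yields a unique Gibbs equilibrium state $\mu:=\mu_{-s}$ for $-s\,\Phi_{\A}$.

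To conclude that $\pi_*\mu$ is the unique measure of maximal dimension, I would apply a Ledrappier--Young type local-dimension formula for equilibrium states of fiber-bunched expanding cocycles and combine it with the Gibbs property of $\mu$ to obtain
\[
\dim_H(\pi_*\mu)=\frac{h_\mu(\sigma)}{\lambda_1(\mu,\A)}=s,
\]
with uniqueness of the dimension-maximizing ergodic measure following by combining Theorem~\ref{thm:Main1} with the dimension variational principle. Finally, Theorem~\ref{thm:Main2} provides the $\psi$-mixing and weak Bernoulli properties of $\mu$; both descend through the finite-to-one H\"older semiconjugacy $\pi$, so $\pi_*\mu$ is mixing for $T$ and, by Ornstein's theorem, measurably isomorphic to a Bernoulli shift. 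The principal obstacle lies in the third step: quantitatively controlling the constant $t_*$ of Theorem~\ref{thm:Main1} against the Bowen critical parameter $-s$ uniformly over $C^1$-open families of $\alpha$-fiber-bunched repellers, and establishing the exact-dimensional Ledrappier--Young identity in the non-conformal fiber-bunched regime, which requires upgrading the Gibbs estimates into pointwise dimension control.
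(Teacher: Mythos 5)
There is a fundamental mismatch here: the statement you were asked about is a \emph{conjecture}, not a theorem of the paper, and the paper itself records that it is \emph{false} in general --- Barral and Feng showed that nonconformal repellers may fail to support any invariant measure of full Hausdorff dimension. What the paper actually proves (Theorem~\ref{main:t2}, via the coding of Section~\ref{proof of TheoremC} together with Theorems~\ref{thm:Main1} and~\ref{thm:Main2}) is only the thermodynamic-formalism part: for a $C^1$-open and $C^r$-dense class of $\alpha$-bunched repellers, the equilibrium states of $\{t\log\|DS^n\|\}$ exist, are unique Gibbs measures for $t\in(-t_*,+\infty)$, and are $\psi$-mixing and weak Bernoulli. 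Nowhere does the paper identify such an equilibrium state as a measure of maximal Hausdorff dimension, and it explicitly disclaims that this can be done in general. So a complete proof of the conjecture as stated cannot exist, and your proposal must break somewhere.

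The break occurs precisely at the dimension-theoretic steps. First, the Bowen-type equation $P(\sigma,-s\,\Phi_{\A})=0$ with $\Phi_{\A}=\{\log\|\A^n\|\}$ characterizes $\dim_H(K)$ only in the conformal case (or when the expected dimension is at most one). For a genuinely nonconformal repeller the correct pressure equation involves Falconer's singular value function $\varphi^s(\A^n)=\sigma_1\cdots\sigma_{\lfloor s\rfloor}\,\sigma_{\lceil s\rceil}^{s-\lfloor s\rfloor}$, which is a different (and not sub- or super-additive) family of potentials, and even then one gets in general only an upper bound for the dimension. Second, the Ledrappier--Young identity you invoke, $\dim_H(\pi_*\mu)=h_\mu(\sigma)/\lambda_1(\mu,\A)$, fails in the nonconformal setting: the pointwise dimension of an ergodic measure for a nonconformal expanding map involves all the Lyapunov exponents, not just the top one, and exact dimensionality plus the Barral--Feng obstruction show the dimension variational principle itself can fail. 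Third, even granting the strategy, you give no argument that the Bowen root $s$ satisfies $s<t_*$: the constant $t_*$ in Theorem~\ref{thm:Main1} comes from a perturbative spectral-gap argument and may be arbitrarily small, whereas the Bowen root is of the order of $\dim_H(K)$; the $\alpha$-bunching hypothesis controls the ratio $\|DT^{-1}\|^{1+\alpha}\|DT\|<1$ but does not by itself force the critical parameter into the range $(-t_*,+\infty)$. The parts of your outline that do survive --- coding by a Markov partition, passing to a $1$-typical fiber-bunched cocycle on a $C^1$-open and $C^r$-dense set, and quoting Theorems~\ref{thm:Main1} and~\ref{thm:Main2} for uniqueness, $\psi$-mixing and the Bernoulli property --- are exactly the content of the paper's Theorem~\ref{main:t2} and its proof; the dimension claims on top of that are the unproved (and in general unprovable) portion.
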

\smallskip
Although this conjecture was ultimately disproven—since nonconformal repellers may fail to support measures of maximal dimension (see \cite{BarralFeng2011})—it motivated the investigation of the mixing properties of such measures, which, whenever they do exist, are measurably isomorphic to a Bernoulli shift. Let us discuss some of its motivations. It is long known that if $T: M \to M$ is a conformal, transitive, $C^{1+\alpha}$ expanding map, then its absolutely continuous invariant measure is the unique equilibrium state for the potential $-\log|\operatorname{det}DT(x)|$. 
For this reason, Conjecture~\ref{conjecture} has already been well understood in the special case of repellers for conformal expanding maps, in which case all Lyapunov exponents of an invariant measure coincide. In contrast, the nonconformal expanding case presents significantly more challenges, as the relevant potentials may be non-additive and may depend on multiple distinct Lyapunov exponents. 
In order to understand the measures of maximal dimension in this setting, it appears essential to analyze the equilibrium states for potentials of the form $t \log \|DT(x)\|$, for $t\in \mathbb R$.
In \cite{Morris21}, Morris showed that totally ergodic equilibrium measures for one-step cocycles are $\psi$-mixing and weak Bernoulli. By a result of Friedman and Ornstein \cite{Friedman70}, this gives an affirmative answer to a special case of the conjecture, namely for self-affine repelling sets with generic translations where the map $DT(x)$ is locally constant. 
In this paper, we consider a more general class of repellors, which persist under $C^1$-perturbations of the dynamics. We address the longstanding conjecture of Gatzouras and Peres (see Theorem~\ref{main:t2}) and significantly improve upon the results of \cite{Morris21}.

Similar questions can be posed concerning the regularity and mixing properties of equilibrium states for the top Lyapunov exponent of Anosov diffeomorphisms. This does not follows from the classical theory as there exist $C^1$-open sets of Anosov diffeomorphisms $T$ with a hyperbolic splitting $TM=E^s\oplus E^u$ so that the cocycle $DT\mid_{E^u}$ contains both elliptic matrices and matrices with a dominated splitting (cf. \cite{BRV18} for more details), in which case $t \log \|DT^n(x)\mid_{E^u}\|$ is sub-additive for $t \geq 0$ and super-additive for $t<0$. As a byproduct of our results for linear cocycles we describe the thermodynamic formalism of open classes of Anosov diffeomorphisms, which satisfy a bunching condition along the unstable subbundle
(see Theorem~\ref{main:t1}).

\smallskip
This paper is organized as follows. In Section~\ref{sec:statements} we present the framework of the paper and state the main results.  Section~\ref{prel} is devoted to some preliminaries on subshift dynamics, basic notions of total ergodicity and weak Bernoulicity of measures and metrics of projective spaces. The fiber-bunching, existence of invariant holonomies and typicality of linear cocycles is discussed in detail in Section~\ref{subsecmatrixc}, where we collect some results from \cite{bonatti2004lyapunov}. In Sections~\ref{sec: Transfer} and ~\ref{sec:Gibbs-eq-states} contain the core results of the paper, including the results on the disintegration of stationary measures and uniqueness of Gibbs equilibrium states.  In Section~\ref{Sec: psi mixing} we prove the results on weak Bernoulicity and $\psi$-mixing. Finally, the applications for hyperbolic repellers and Anosov diffeomorphisms are given in Sections~\ref{proof of TheoremC}.

\section{Statement of the main results}\label{sec:statements}

Through this paper, we will always assume that 
$\sigma:\Sigma \to \Sigma$ is a topologically mixing subshift of finite type,
where $\Sigma \subset \{1,2,\dots,k\}^{\Z}$ is the symbolic space, endowed with the metric $d$ defined in ~\eqref{metric}, and $\sigma(x_{n})_{n\in \Z}=(x_{n+1})_{n\in \Z}$ for any $(x_{n})_{n\in \Z} \in \Sigma$.

\medskip

Let $\A: \Sigma \to \glr$ be a matrix cocycle over $(\Sigma, \sigma)$. For each $\sigma$-invariant probability measure $\mu$ such that $\log \left\|\mathcal{A}^{ \pm 1}\right\| \in L^1(\mu)$, Oseledets' theorem ensures that for $\mu$-almost every $x$ there exist $k(x) \geqslant 1$ and an $\mathcal{A}$-invariant splitting $\mathbb{R}^d=E_x^1 \oplus E_x^2 \oplus \cdots \oplus E_x^{k(x)}$ so
that the Lyapunov exponents are well defined by the limits
$$
\chi_i(\mathcal{A}, x)=\lim _{n \rightarrow \infty} \frac{1}{n} \log \left\|\mathcal{A}^n(x) v\right\|, \quad \text { for every } v \in E_x^i \backslash\{0\}
$$
We denote by $\lambda_1(\mathcal{A}, x) \geqslant \lambda_2(\mathcal{A}, x) \geqslant \ldots \geqslant \lambda_d(\mathcal{A}, x)$ the Lyapunov exponents, counted with multiplicity, of the cocycle $\mathcal{A}$ at $x \in \Sigma$. Set also $\lambda_i(\mu, \mathcal{A}):=\int \lambda_i(\mathcal{A}, \cdot) d \mu$ for $1 \leqslant i \leqslant d$. We note that the Lyapunov exponents $\lambda_i(\mathcal{A}, x)$ can be computed as
$$
\lambda_i(\mathcal{A}, x)=\lim _{n \rightarrow \infty} \frac{1}{n} \log \sigma_i\left(\mathcal{A}^n(x)\right)
$$
where $\sigma_1, \ldots, \sigma_d$ are the singular values, listed in decreasing order according to multiplicity \cite{horn1994topics}. For the simiplisity, we denote $\lambda(\mu, \A):=\lambda_1(\mu, \A).$

\begin{rem}
In the special case of uniformly hyperbolic dynamical systems
(including uniformly expanding maps and Anosov diffeomorphisms), dynamical cocycles can assume a simpler formulation.  Indeed, as these admit a symbolic coding by a subshift of finite type \cite{Bow}, the derivative cocycle of a uniformly hyperbolic map can effectively be regarded as a linear cocycle over a subshift of finite type. We refer the reader to Section~\ref{proof of TheoremC} for more details.    
\end{rem}

\medskip
\subsection{Main results} As mentioned above, in this paper we consider H\"older continuous fiber-bunched linear cocycles over the topologically mixing subshift of finite type $(\Sigma, \sigma)$.

\begin{maintheorem}
    \label{thm:Main1}
    \emph{Let $\mathcal{A}: \Sigma \rightarrow GL(d, \R)$ be a fiber-bunched linear cocycle over $(\Sigma, \sigma)$. Assume $\A$ is $1$-typical. There exists $t_*>0$ such that:
    \begin{enumerate}
        \item  For each $t\in (-t_*,+\infty)$, there exists a unique equilibrium state $\mu_t$ for $t \Phi_{\A}$ and, moreover, $\mu_t$ is a Gibbs measure;  
        \item $(-t_*,t_*)\ni t \mapsto P(\sigma, t\Phi_{\A})'= \lambda_1({\mu_t}, \A)$ is real analytic.
    \end{enumerate}
    }
\end{maintheorem}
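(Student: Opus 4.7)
The plan is to build $\mu_t$ from the spectral data of the transfer operator $\mathcal{L}_t$ on the projective bundle $\Sigma\times\P$, and to split the argument into $t\ge 0$, which is essentially known, and $t\in(-t_*,0)$, which carries the new content. For $t\ge 0$ the family $t\Phi_\A$ is subadditive, $1$-typicality implies quasi-multiplicativity by \cite{park2020quasi}, and Feng's theorem \cite{Feng11} then yields a unique Gibbs equilibrium state. What one still wants in this range, and must produce from scratch for negative $t$, is to realize $\mu_t$ as the natural extension of the measure built from the leading eigenfunction and eigenmeasure of $\mathcal{L}_t$; this identification is what allows analyticity on $(-t_*,t_*)$ to be derived later by operator perturbation theory.

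The first step is the spectral picture of $\mathcal{L}_t$ on a space of H\"older continuous functions on $\Sigma\times\P$. At $t=0$ the operator is a classical Ruelle operator with reference weight $g$, which has a spectral gap. Fiber-bunching controls the distortion of the projective action, and the existence of invariant stable/unstable holonomies (recalled from \cite{bonatti2004lyapunov}) supplies the regularity needed to set up a Lasota--Yorke type inequality that persists for all $t$ in an open interval containing the origin. This gives quasi-compactness of $\mathcal{L}_t$, and a Perron--Frobenius/cone argument, which uses $1$-typicality to rule out invariant subbundles or flags, upgrades the leading eigenvalue $\rho(t)$ to a simple isolated one. Kato's analytic perturbation theorem then yields analytic dependence of $\rho(t)$, the eigenfunction $h_t$, and the eigenmeasure $\nu_t$ on $t\in(-t_*,t_*)$ for some $t_*>0$ controlled by the fiber-bunching constants and the spectral gap at $t=0$.

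Next I would turn $h_t\nu_t$ into an invariant probability $\mu_t$ by projecting to $\Sigma^+$ and passing to the natural extension, and then establish the Gibbs property. This is where the fiber-bunched case genuinely diverges from \cite{Rush}: the eigenmeasure $\nu_t$ is supported on the entire product space and no longer factors through a measure on $\P$. Following the Bonatti--Viana strategy, I would disintegrate $\nu_t=\int \nu_{t,x}\,d\pi_*\nu_t(x)$ along the base and use the invariant holonomies to compare conditionals $\nu_{t,x}$ and $\nu_{t,y}$ for $x,y$ lying in the same long cylinder. The resulting equivariance, combined with the pinching/twisting of $1$-typicality (which prevents the conditionals from concentrating on an invariant flag), should yield uniform two-sided estimates
\[
C^{-1}\le \frac{\mu_t([x_0,\dots,x_{n-1}])}{e^{-nP(\sigma,t\Phi_\A)}\|\A^n(x)\|^t}\le C,
\]
and hence the Gibbs property, with $P(\sigma,t\Phi_\A)=\log\rho(t)$. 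Uniqueness of equilibrium states among $\sigma$-invariant probabilities then follows from Gibbs by standard sub/super-additive arguments.

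Analyticity in item (2) is immediate from Step 1, since $P(\sigma,t\Phi_\A)=\log\rho(t)$ is analytic on $(-t_*,t_*)$, and differentiating the variational equality together with the uniqueness of $\mu_t$ gives $P'(t)=\lambda_1(\mu_t,\A)$. The main obstacle is Step 3: transferring multiplicative estimates from the fiber directions to cylinder sets on the base when $\nu_t$ has support on all of $\Sigma\times\P$. It is precisely here that fiber-bunching is needed to produce H\"older-continuous holonomies and $1$-typicality is needed so that the fiber conditionals $\nu_{t,x}$ interact nontrivially with $\A$; together they should propagate the Gibbs estimate uniformly in $x$ and $n$.
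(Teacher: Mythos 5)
Your proposal captures the right high-level architecture (transfer operator $\mathcal{L}_t$ on $\Sigma\times\P$, spectral gap and analytic perturbation from Park--Piraino, natural extension of $h_t\,d\nu_t$, Gibbs property, uniqueness), and the $t\ge 0$ reduction to Feng's theorem is fine. But the two hard steps you flag are treated with mechanisms that do not match the paper's and, as described, leave genuine gaps.

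For the Gibbs property you propose to disintegrate $\nu_t$ along the base and compare conditionals $\nu_{t,x}$, $\nu_{t,y}$ by invariant holonomies inside a cylinder, invoking pinching/twisting to ``prevent concentration on a flag.'' The paper's actual mechanism is different and more quantitative. First, one shows the conditionals $\nu_{t,x}$ are \emph{atomic}, namely $\nu_{t,x}=\delta_{\overline{\xi_*}(x)}$ (Lemma~\ref{le:57rush}, building on Lemma~\ref{invariant measure} and the Bonatti--Viana $u$-state rigidity). Second, the loss of multiplicativity $\|\A^{[|I|]}(Ix)u\|\,/\,\|\A(I)\|$ is controlled by the projective angle $\delta_{\P}(\overline{\xi_*}(x),\overline{v_{+}}(\A^{|I|}(x)))$ via Lemma~\ref{benoist}, and the integrability of $\delta_{\P}(\cdot)^{-s}$ against $\hat\nu_t$ is obtained from exponential large deviation estimates for $\log\|\A^{[n]}\|$, $\log\gamma_{1,2}(\A^{[n]})$, and the Oseledets direction (Lemmas~\ref{LD application}, \ref{LD application2}, \ref{inequlity for the neqative s}). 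You never mention large deviations, yet they are the engine that makes ``propagate the Gibbs estimate uniformly in $x$ and $n$'' true; without them the holonomy-comparison sketch does not yield a bound of the required form. Moreover, the holonomies do not directly compare conditionals at nearby base points in the way you suggest; they are used upstream to establish the rigidity of $u$-states, not to push the Gibbs estimate along cylinders.

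The uniqueness claim for $t<0$ is also underestimated. You assert it ``follows from Gibbs by standard sub/super-additive arguments,'' but for super-additive families there is no analogue of Feng's quasi-multiplicativity machinery. The paper must first prove (Claim~1 in Theorem~\ref{uniqness}) that any equilibrium state for $\Phi_{\A,t}$ with $t$ small negative has a simple top Lyapunov exponent, by a compactness/contradiction argument at $t\to 0^-$ using upper semicontinuity of entropy and exponents and simplicity of the spectrum for $\mu_0$. It then builds a $g$-function $g_t$ from $\hat h_t$, $\rho_t$ and the Oseledets direction, invokes Ledrappier's characterization of equilibrium states as $g$-measures, and finally uses a topological-mixing connecting-word estimate together with Feng's absolute-continuity lemma to identify any equilibrium state with $\hat\mu_t$. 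None of this is ``standard''; it is the main new content for negative $t$ and should be made explicit in your Step 3.
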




Recently, the first-named author and Quas~\cite{mohammadpour-Quas} constructed an example of a matrix cocycle satisfying our assumptions, for which the equilibrium state for $t\Phi_{\mathcal{A}}$ is not unique for some $t < 0$. This demonstrates that one should not expect the above result to hold for all $t < 0$.

\medskip
It is worth noticing that if $\A$ is a $GL(d, \R)$ typical cocycle then $\A^{\wedge k}$ is a $ GL\bigg({\small\big(\!\begin{array}{c} d \\ k \end{array}\!\big)},\mathbb R \bigg)$ $1$-typical cocycle and, given any $\sigma$-invariant probability measure $\eta$ one has 
$$
\lambda_1(\eta, \A^{\wedge k})=\sum_{i=1}^k
\lambda_i(\eta, \A).
$$ 
Together with Theorem~\ref{thm:Main1}, this yields the following direct consequence:

\begin{maincorollary}
    \label{cor:Main1}
    \emph{Let $\mathcal{A}: \Sigma \rightarrow GL(d, \R)$ be a $\A$ is typical fiber-bunched linear cocycle over $(\Sigma, \sigma)$. There exists $t_*>0$ such that, for each $1\le k \le d$, the following hold:
        \begin{enumerate}
        \item  For each $t\in (-t_*,+\infty)$, there exists a unique equilibrium state $\mu^{k}_t$ for $t \Phi_{\A^{\wedge k}}$ and, moreover, $\mu^k_t$ is a Gibbs measure;  
        \item For every $1\le i \le d$, the Lyapunov exponent functions
    $$
    (-t_*,t_*)\ni t \mapsto \sum_{i=1}^k
\lambda_i(\mu_t^k, \A)
    $$
    are real analytic with respect to $t$.
    \end{enumerate}
    }
\end{maincorollary}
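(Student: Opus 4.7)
The plan is to reduce Corollary~\ref{cor:Main1} to Theorem~\ref{thm:Main1} applied to each exterior power cocycle $\mathcal{A}^{\wedge k}$, exploiting the identity recalled in the excerpt,
\[
\lambda_1(\eta, \mathcal{A}^{\wedge k}) \,=\, \sum_{i=1}^{k} \lambda_i(\eta, \mathcal{A}),
\]
which holds for every $\sigma$-invariant probability measure $\eta$ and every $1 \le k \le d$. The strategy is thus to transfer all conclusions known for top Lyapunov exponents of $1$-typical cocycles (from Theorem~\ref{thm:Main1}) to sums of the first $k$ exponents of $\mathcal{A}$ via its $k$-th exterior power.

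First, I would verify that $\mathcal{A}^{\wedge k}: \Sigma \to GL\!\bigl(\binom{d}{k}, \R\bigr)$ inherits the hypotheses of Theorem~\ref{thm:Main1} from $\mathcal{A}$. Hölder continuity passes to exterior powers by multilinearity, and the fiber-bunching inequality for $\mathcal{A}^{\wedge k}$ follows from the one for $\mathcal{A}$ since $\|\mathcal{A}^{\wedge k}(x)\|\cdot\|(\mathcal{A}^{\wedge k}(x))^{-1}\|$ is controlled by a fixed power of $\|\mathcal{A}(x)\|\cdot\|\mathcal{A}(x)^{-1}\|$ (possibly after adjusting the Hölder exponent within the fiber-bunched class). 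The decisive input is $1$-typicality of $\mathcal{A}^{\wedge k}$ for every $k$, which is precisely how typicality of $\mathcal{A}$ is defined in Subsection~\ref{subsec:typical}. Hence the assumption that $\mathcal{A}$ is typical delivers the hypotheses of Theorem~\ref{thm:Main1} simultaneously for all $d$ cocycles $\mathcal{A}^{\wedge k}$, $1 \le k \le d$.

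Applying Theorem~\ref{thm:Main1} to $\mathcal{A}^{\wedge k}$ then furnishes a constant $t_*^{(k)} > 0$ such that, for every $t \in (-t_*^{(k)}, +\infty)$, the potential $t\Phi_{\mathcal{A}^{\wedge k}}$ admits a unique equilibrium state $\mu_t^k$, this measure is Gibbs, and
\[
t \;\longmapsto\; P(\sigma, t\Phi_{\mathcal{A}^{\wedge k}})' \,=\, \lambda_1(\mu_t^k, \mathcal{A}^{\wedge k})
\]
is real analytic on $(-t_*^{(k)}, t_*^{(k)})$. Setting $t_* := \min_{1\le k \le d} t_*^{(k)} > 0$ yields a single threshold valid for all $k$, which proves item~(1), while item~(2) follows by substituting the exterior-power identity:
\[
\sum_{i=1}^{k} \lambda_i(\mu_t^k, \mathcal{A}) \,=\, \lambda_1(\mu_t^k, \mathcal{A}^{\wedge k}),
\]
so the left-hand side inherits real analyticity on $(-t_*, t_*)$ from the right-hand side.

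I do not foresee any serious obstacle beyond bookkeeping, as the substantive analysis is entirely absorbed into Theorem~\ref{thm:Main1}. The only delicate point is making sure that the definition of \emph{typical} adopted in Subsection~\ref{subsec:typical} is formulated so that typicality of $\mathcal{A}$ really does imply $1$-typicality of \emph{every} exterior power $\mathcal{A}^{\wedge k}$; this is standard in the Bonatti--Viana/Avila--Viana framework, and once this is unpacked the corollary is an immediate consequence.
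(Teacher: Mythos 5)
Your overall strategy is exactly the one the paper intends: apply Theorem~\ref{thm:Main1} to each exterior power $\mathcal{A}^{\wedge k}$, invoke the identity $\lambda_1(\eta,\mathcal{A}^{\wedge k})=\sum_{i=1}^k\lambda_i(\eta,\mathcal{A})$, and take $t_*$ as the minimum of the resulting thresholds. The paper presents the corollary as an immediate consequence of these two observations, and your write-up fleshes out the same argument.

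However, the justification you give for the hypotheses being inherited by $\mathcal{A}^{\wedge k}$ contains an incorrect step. You assert that fiber-bunching of $\mathcal{A}^{\wedge k}$ ``follows from'' fiber-bunching of $\mathcal{A}$ because the bunching quantity is controlled by a fixed power of the original, possibly after adjusting the H\"older exponent. This is false. If $\sigma_1\ge\dots\ge\sigma_d$ are the singular values of $\mathcal{A}(x)$, then
$$
\|\mathcal{A}^{\wedge k}(x)\|\,\|(\mathcal{A}^{\wedge k}(x))^{-1}\|=\frac{\sigma_1\cdots\sigma_k}{\sigma_{d-k+1}\cdots\sigma_d},
$$
and this can exceed $\sigma_1/\sigma_d=\|\mathcal{A}(x)\|\,\|\mathcal{A}(x)^{-1}\|$ for intermediate $k$. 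For instance, with $d=4$, $k=2$ and singular values $1.4, 1.3, 0.9, 0.8$, one has $\sigma_1/\sigma_4=1.75<2$ while $\sigma_1\sigma_2/(\sigma_3\sigma_4)\approx 2.53>2$, so $\mathcal{A}$ can satisfy \eqref{fiber} with $\theta=1$ while $\mathcal{A}^{\wedge 2}$ does not. Lowering the H\"older exponent only tightens \eqref{fiber} (the right-hand side $2^\theta$ decreases), so it cannot repair this. Fortunately your step is also unnecessary: Definition~\ref{typical1} of 1-typicality is stated only for cocycles in $C_b^\theta$, so the hypothesis in the corollary that $\mathcal{A}$ is \emph{typical} already carries the assumption that each $\mathcal{A}^{\wedge k}$ is fiber-bunched. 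You should simply invoke that, rather than trying to derive it.

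One further small point: the paper's definition of \emph{typical} requires $\mathcal{A}^{\wedge k}$ to be 1-typical only for $1\le k\le d-1$, whereas the corollary is stated for $1\le k\le d$. For $k=d$ the cocycle $\mathcal{A}^{\wedge d}(x)=\det\mathcal{A}(x)$ is scalar, so $\log\|(\mathcal{A}^{\wedge d})^n\|$ is a Birkhoff sum of a H\"older function and all the asserted conclusions (unique Gibbs equilibrium state for every $t\in\R$, analyticity of the pressure and its derivative) follow from the classical additive theory rather than from Theorem~\ref{thm:Main1}. Worth a sentence to close the loop.
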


The previous discussion prompts a question about the regularity of all Lyapunov exponents with respect to a fixed family of equilibrium states. More precisely, it is natural to ask the following:

\medskip
\noindent{\bf Question:}
 Under which conditions can one prove that all Lyapunov exponents functions $
    (-t_*,t_*)\ni t \mapsto 
\lambda_i(\mu_t^1, \A)
    $     
to be real analytic? 
\medskip

\medskip
Other related questions concerns the finer ergodic properties of the equilibrium states, in particular the Bernoulli property and $\psi$-mixing (see Subsection \ref{sebsec:weakly Bern} for the definition). 
We say that an invariant measure $\mu \in \M(\sigma)$ is {\em quasi-Bernoulli} if there is $D>0$ such that for any $I, J \in \Sigma^*$, we have
\begin{equation}\label{QB}
    D^{-1}\mu([I])\mu([J])\leq \mu([IJ]) \leq D \mu([I]) \mu([J]).
\end{equation}
It is clear that Gibbs measures for additive or almost additive sequences of potentials are quasi-Bernoulli. 


\begin{maintheorem}
\label{thm:Main2}
    \emph{Let $\mathcal{A}: \Sigma \rightarrow GL(d, \R)$ be a fiber-bunched linear cocycle over $(\Sigma, \sigma)$, and let $t_* > 0$ and $\mu_t$ be as given by Theorem~\ref{thm:Main1}.
For each $t\in (-t_*,+\infty)$, the equilibrium measure $\mu_t$ is totally ergodic and $\psi$-mixing:$$\lim_{n\to \infty}\sup_{I, J \in \Sigma^{\ast}}\left|\frac{\mu_{t}\left([I] \cap \sigma^{-n-|I|}[J]\right)}{ \mu_{t}([I])  \mu_{t}([J])}-1\right|=0.
$$
In particular, ${\mu_t}$ is quasi-Bernoulli and weak Bernoulli. 
}
\end{maintheorem}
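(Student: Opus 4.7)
The plan is to establish $\psi$-mixing via the spectral structure of the transfer operator $\mathcal{L}_t$ produced in the proof of Theorem~\ref{thm:Main1}, to obtain total ergodicity from the uniqueness assertion in Theorem~\ref{thm:Main1} applied to iterates of the dynamics, and to deduce quasi-Bernoulli and weak Bernoulli as consequences. The starting point is the spectral picture of $\mathcal{L}_t$ which will be available from Theorem~\ref{thm:Main1}: a leading simple eigenvalue $\rho(t)=e^{P(\sigma,t\Phi_{\mathcal{A}})}$ with a strictly positive H\"older eigenfunction $h_t$ and an eigenmeasure $\nu_t$, a spectral gap on a suitable space of H\"older functions on $\Sigma\times\P$, and the Gibbs estimate $\mu_t([I])\asymp \|\mathcal{A}^{|I|}(x)\|^t e^{-|I|P(\sigma,t\Phi_{\mathcal{A}})}$ with constants independent of $[I]$ and of $x\in[I]$.

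To prove $\psi$-mixing, I would express joint probabilities of cylinders through the transfer operator. For cylinders $[I],[J]\in\Sigma^{\ast}$ and $m=n+|I|$, after identifying $\mu_t$ with the natural extension of the one-sided invariant measure $h_t\,d\nu_t$, one writes
\[
\mu_t\bigl([I]\cap\sigma^{-m}[J]\bigr)=\rho(t)^{-m}\int \mathbf{1}_{[J]}\,\mathcal{L}_t^{m}\bigl(\mathbf{1}_{[I]}\,h_t\bigr)\,d\nu_t.
\]
The spectral decomposition $\rho(t)^{-m}\mathcal{L}_t^{m}=\Pi_t+\rho(t)^{-m}R_t^{m}$, with rank-one projection $\Pi_t$ onto $h_t\otimes\nu_t$ and residual part satisfying $\rho(t)^{-m}\|R_t^{m}\|\leq C\theta^{m}$ for some $\theta<1$, isolates the leading term $\mu_t([I])\mu_t([J])$ from a remainder bounded by $C\theta^{m}\|\mathbf{1}_{[I]}h_t\|_{\mathrm{H\"ol}}\mu_t([J])$. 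Since the H\"older norm of $\mathbf{1}_{[I]}h_t$ grows like $\mu_t([I])\,e^{\alpha|I|}$, a direct bound is insufficient. The standard remedy is to split $[I]$ into an initial block handled by a disjoint cylinder partition (using the uniform quasi-multiplicativity of $\|\mathcal{A}^n\|$ from \cite{park2020quasi}) and a tail handled by the spectral gap estimate, yielding uniform convergence of the ratio $\mu_t([I]\cap\sigma^{-n-|I|}[J])/(\mu_t([I])\mu_t([J]))$ to $1$.

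For total ergodicity, I would use a uniqueness argument at each iterate. Since $\mathcal{A}$ being $1$-typical over $(\Sigma,\sigma)$ implies $\mathcal{A}^n$ is $1$-typical over $(\Sigma,\sigma^n)$ for every $n\geq 1$ (a pinching periodic point remains pinching for $\sigma^n$, and twisting is preserved under passing to iterates), Theorem~\ref{thm:Main1} applied to $(\sigma^n,\mathcal{A}^n)$ yields a unique equilibrium state for $t\Phi_{\mathcal{A}^n}$, which must coincide with $\mu_t$ since $\mu_t$ is $\sigma^n$-invariant and attains the corresponding pressure. Hence $\mu_t$ is $\sigma^n$-ergodic for every $n\geq 1$. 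Once $\psi$-mixing is in hand, the quasi-Bernoulli bound \eqref{QB} follows by substituting the Gibbs estimates into $\mu_t([IJ])$, $\mu_t([I])$, $\mu_t([J])$ together with quasi-multiplicativity, and weak Bernoulli is a classical consequence of $\psi$-mixing for finite-state stationary processes (see, e.g., \cite{Bradley05}).

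The main obstacle I anticipate is the uniformity in the $\psi$-mixing step: the spectral gap of $\mathcal{L}_t$ lives in the H\"older topology on $\Sigma\times\P$, whereas cylinder indicators have H\"older norms growing exponentially in $|I|$. Reconciling this growth with the Gibbs decay of $\mu_t([I])$ uniformly over all cylinders $I,J$ is substantially more subtle than in the additive H\"older setting, for two reasons: the potentials $t\Phi_{\mathcal{A}}$ are non-additive, and the stationary measure $\nu_t$ depends on the full projective fiber rather than only on $\P$. The block-splitting strategy together with the quasi-multiplicativity estimates developed in \cite{Call2023,MP-uniform-qm} appears to be the natural path forward.
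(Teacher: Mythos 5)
Your route through the spectral gap of $\mathcal{L}_t$ differs substantially from the paper's argument, and as sketched it contains a genuine gap that you yourself flag but do not close. The obstacle is real: $\|\mathbf{1}_{[I]}h_t\|_{\alpha}$ grows like $2^{\alpha|I|}$, so the remainder bound $C\theta^{m}\|\mathbf{1}_{[I]}h_t\|_{\alpha}\,\mu_t([J])$ does \emph{not} yield the uniformity over all $I\in\Sigma^{*}$ that the $\psi$-mixing statement demands, and the proposed ``block-splitting'' with a ``disjoint cylinder partition'' is left at the level of a hope rather than a construction. Without a concrete mechanism for trading the exponentially large H\"older constant against the exponentially small mass $\mu_t([I])$ \emph{uniformly in $I$}, this step does not go through. (Your total-ergodicity argument via uniqueness of equilibrium states for $(\sigma^{n},\mathcal{A}^{n})$ is plausible, but it requires verifying that Theorem~\ref{thm:Main1} applies to each iterate and identifying its unique equilibrium state with $\mu_t$; the paper obtains total ergodicity for free from the lower bound below.)

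The paper sidesteps the transfer operator entirely at this stage and instead combines the Gibbs estimate $\hat\mu_t([I])\asymp e^{-|I|P}\|\mathcal{A}^{|I|}(x)\|^{t}$ with the uniform $k$-quasi-multiplicativity established in Lemma~\ref{k-QM}: there is a fixed word length $m$ such that for all $I,J$ some $K\in\Sigma_m$ satisfies $\|\mathcal{A}(IKJ)\|\geq C_1\|\mathcal{A}(I)\|\|\mathcal{A}(J)\|$. For $t\geq 0$ this, plus sub-multiplicativity of $\|\mathcal{A}^n\|$, gives two-sided bounds
$$
\kappa\,\hat\mu_t([I])\hat\mu_t([J]) \;\leq\; \hat\mu_t\big([I]\cap\sigma^{-m-|I|}[J]\big) \;\leq\; \delta\,\hat\mu_t([I])\hat\mu_t([J])
$$
with constants $\kappa,\delta>0$ uniform in $I,J$; for $t<0$ the roles of sub- and super-multiplicativity swap but the argument is symmetric. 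From these bounds total ergodicity follows directly (from the $\liminf$ lower bound), and $\psi$-mixing follows by invoking \cite[Theorem~1]{bradley1983mixing}, which upgrades uniform $\psi'$- and $\psi^{*}$-type bounds at a single lag to full $\psi$-mixing. This purely combinatorial route avoids the H\"older-norm issue altogether, and the quasi-Bernoulli bound is immediate from the same two displayed inequalities. If you want to salvage the spectral-gap route, you would need an argument of the kind in \cite{MP-uniform-qm} that controls the transfer operator acting on indicators of long cylinders, but the paper's approach is both shorter and more robust here.
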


The proof of Theorem~\ref{thm:Main2} occupies Section \ref{Sec: psi mixing}.

\subsection{Applications to hyperbolic repellers and Anosov diffeomorphisms} 
In this subsection, we derive several applications of the previous main results to the context of smooth dynamical systems. 
Let us recall some notions and introduce the notation.

\smallskip
Let $M$ be a Riemannian manifold and $T$ be a $C^1$-smooth map on $M$. A compact $T$-invariant subset $\Lambda \subset M$ is a \emph{hyperbolic repeller} if
 \begin{enumerate}
 \item[(i)] 
 there exists an adapted norm $\|\cdot\|$ and $\lambda>1$ such that
$$
\left\|DT(x)v\right\| \geq \lambda\|v\|\qquad 
\text{for all $x \in \Lambda$ and $v \in T_x M$,}
$$
\item[(ii)]  there exists a bounded open neighborhood $V$ of $\Lambda$ such that
$$
\Lambda=\Big\{x \in V: T^n x \in V \text { for all } n \geq 0\Big\} .
$$
 \end{enumerate}
 Given such a hyperbolic repeller $\Lambda$ and $\alpha \in(0,1]$, we say that $\left.T\right|_{\Lambda}$ is \emph{$\alpha$-bunched} if
\begin{equation}
    \label{def:alhpabunchedT}
\left\|DT(x)^{-1}\right\|^{1+\alpha} \cdot\left\|DT(x)\right\|<1
\qquad 
\text{for all $x \in \Lambda$.}
\end{equation}
We note that the 1-bunching assumption for repellers was first 
studied by Falconer \cite{falconer94}, and that $C^1$-open classes of $\alpha$-bunched repellers can be obtained by $C^1$-small perturbations of conformal repellers. 
 The next theorem provides a solution to Gatzouras and Peres' Conjecture \ref{conjecture} for open classes of hyperbolic repellers. More precisely:

\begin{maintheorem}\label{main:t2}
 \emph{Let $M$ be a Riemannian manifold, $r \ge 2$ and $T$ be $C^r$-smooth map on $M$.  If $\Lambda \subset M$ is an $\alpha$-bunched repeller for $T$, with $\alpha \in (0,1)$, then there exists a $C^1$-open neighborhood $\mathcal{V}_1\subset C^r(M, M)$ of $T$ and a $C^1$-open and $C^r$-dense subset $\mathcal{V}_2 \subset \mathcal{V}_1$ with the following property:  For every $S \in \mathcal{V}_2$,  there exists $t_*>0$ such that:
\begin{itemize}
    \item[(a)]For every $t \in (-t_*,+\infty)$, there exists a unique equilibrium state ${\mu_t}$ for the family of potentials $\{ t \log \| DS_{|\Lambda_S}^n(x) \| \}_{n\ge 1}$ and it is a Gibbs measure.  
    \item[(b)]For every $t \in (-t_*,+\infty)$:
    \begin{itemize} 
        \item[(b1)]the equilibrium state ${\mu_t}$ is $\psi$-mixing, that is,  
\[
\lim_{n\to\infty} \sup_{\substack{B_1 \in \mathcal{F}_{-\infty}^0,\, B_2 \in \mathcal{F}_n^{\infty} \\ {\mu_t}(B_1) > 0,\, { \mu_t}(B_2) > 0}} \left| \frac{{\mu_t}(B_1 \cap B_2)}{{ \mu_t}(B_1){ \mu_t}(B_2)} - 1 \right|=0,
\]  
where one considers the sigma-algebras $\mathcal{F}_{a}^b=\sigma(S^{-n}(B) \colon B\in \mathcal B, a\le n\le b)$ generated by $S$ between times $a$ and $b$,
\item[(b2)]the equilibrium state ${\mu_t}$ is totally ergodic, quasi-Bernoulli, $\psi$-mixing and weak Bernoulli. In particular, $(\sigma, {\mu_t})$ is conjugate to a Bernoulli shift.
    \end{itemize}
    \item[(c)]$(-t_*,t_*)\ni t \mapsto P(\sigma, t\log\| DS_{|\Lambda_S}^n(x)\|)'= \lambda_1({\mu_t}, D S_{|\Lambda_S})$ is real analytic.
    \end{itemize}
    }
 \end{maintheorem}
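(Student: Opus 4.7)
The plan is to reduce Theorem~\ref{main:t2} to the cocycle results of Theorems~\ref{thm:Main1} and \ref{thm:Main2} through Markov coding of the repeller. By structural stability of hyperbolic repellers, one can choose a $C^1$-neighborhood $\mathcal{V}_1$ of $T$ so that every $S \in \mathcal{V}_1$ has an $\alpha$-bunched hyperbolic repeller $\Lambda_S$ close to $\Lambda$, together with a finite Markov partition producing a topologically mixing subshift of finite type $(\Sigma,\sigma)$ and a H\"older semiconjugacy $\pi_S \colon \Sigma \to \Lambda_S$ with $S\circ \pi_S = \pi_S\circ \sigma$. First I pull back the derivative cocycle along this coding to define
$$
\mathcal{A}_S\colon \Sigma \to GL(d,\R), \qquad \mathcal{A}_S(x):=DS(\pi_S(x)).
$$
Since $r\ge 2$ and $\pi_S$ is H\"older, $\mathcal{A}_S$ is H\"older continuous, and the $\alpha$-bunching condition \eqref{def:alhpabunchedT} translates directly to fiber-bunching of $\mathcal{A}_S$ in the sense used in Section~\ref{subsecmatrixc}; as fiber-bunching is a $C^0$-open condition on the cocycle and $S \mapsto \mathcal{A}_S$ is $C^1$-continuous, this is preserved throughout $\mathcal{V}_1$.

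Next I define $\mathcal{V}_2 \subset \mathcal{V}_1$ to be the set of $S$ for which $\mathcal{A}_S$ is $1$-typical in the Bonatti--Viana sense. Pinching and twisting are $C^0$-open conditions on the cocycle and so determine a $C^1$-open subset of $\mathcal{V}_1$; for $C^r$-density I adapt the perturbation scheme of \cite{bonatti2004lyapunov,avila2007simplicity}: pinching is achieved by realizing a periodic point $p$ of $S$ whose derivative $DS^{\mathrm{per}(p)}(p)$ has simple spectrum with distinct eigenvalue moduli, and twisting by creating a second periodic orbit together with a homoclinic relation along which the holonomies act non-trivially on the eigendirections. All of these conditions can be produced by $C^r$-small perturbations of $S$ supported in pairwise disjoint neighborhoods of finitely many periodic points, using that modifying $S$ on such a neighborhood changes $DS$ there with essentially unconstrained degrees of freedom while preserving the underlying hyperbolic structure and $\alpha$-bunching.

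With $\mathcal{V}_2$ in place, Theorem~\ref{thm:Main1} applied to $\mathcal{A}_S$ produces $t_*>0$ and, for each $t\in (-t_*,+\infty)$, a unique Gibbs equilibrium state $\widetilde\mu_t$ on $\Sigma$ for $t\Phi_{\mathcal{A}_S}$, with analytic dependence of $\lambda_1(\widetilde\mu_t,\mathcal{A}_S)$ on $t$. Set $\mu_t := (\pi_S)_*\widetilde\mu_t$. Because $\log \|DS_{|\Lambda_S}^n(\pi_S(x))\| = \log\|\mathcal{A}_S^n(x)\|$ and $\pi_S$ preserves entropy (being a finite-to-one factor map with boundary of measure zero), the variational principle identifies $\mu_t$ as an equilibrium state on $\Lambda_S$; conversely, any equilibrium state on $\Lambda_S$ lifts via $\pi_S^{-1}$ to an equilibrium state on $\Sigma$, and Theorem~\ref{thm:Main1} then forces uniqueness and the Gibbs property on $\Lambda_S$. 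This establishes (a) and (c).

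For (b), Theorem~\ref{thm:Main2} gives that $\widetilde\mu_t$ is totally ergodic, $\psi$-mixing, quasi-Bernoulli and weak Bernoulli. Because $\pi_S$ restricts to a measure-theoretic isomorphism between $(\sigma,\widetilde\mu_t)$ and $(S,\mu_t)$ modulo the Markov boundary (a set of zero $\widetilde\mu_t$-measure), these properties transfer verbatim to $\mu_t$, and the Friedman--Ornstein theorem \cite{Friedman70} upgrades weak Bernoulli to Bernoulli conjugacy. The main obstacle of the proposal is the $C^r$-density of $\mathcal{V}_2$ in $\mathcal{V}_1$: unlike the linear cocycle setting, one cannot perturb $\mathcal{A}_S$ arbitrarily, but must realize pinching and twisting through genuine $C^r$ perturbations of the map $S$ that preserve $\alpha$-bunching. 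The $\alpha \in (0,1)$ margin in \eqref{def:alhpabunchedT} and the regularity $r\ge 2$ both play a role here, as they give room to implement the periodic-orbit perturbations without destroying the fiber-bunched regime.
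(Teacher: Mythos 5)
Your overall reduction strategy --- Markov coding of the repeller followed by an application of Theorems~\ref{thm:Main1} and \ref{thm:Main2} --- matches the paper's. However, the $C^r$-density of $\mathcal{V}_2$, which you yourself flag as the main obstacle, is not resolved, and your sketch of it does not transfer to the non-invertible setting. You propose to achieve twisting by ``creating a second periodic orbit together with a homoclinic relation,'' but for a non-invertible expanding map there is no homoclinic loop in the usual sense: a point $z\in W^s(p)\cap W^u(p)$ requires a well-defined backward orbit. The paper replaces the loop by a pre-orbit $\{z_n\}$ of the fixed point $p_0$ (with $T(z_n)=z_{n-1}$ and $z_n\to p_0$) and the associated holonomy $\tilde H_{p_0}^{\{z_n\},-}$ of \eqref{defholnoninv}. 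Even with that corrected notion, realizing pinching and twisting by $C^r$-small perturbations of the \emph{map} --- rather than of the cocycle, which cannot be perturbed freely --- is a genuine constrained-perturbation problem; the paper does not reprove it but invokes \cite[Lemma 5.10]{park2020quasi} precisely for this purpose (see Lemma~\ref{dense subset}). Without that input (or an equivalent argument), your $\mathcal{V}_2$ is not shown to be $C^r$-dense.

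A secondary technical gap: you set $\mathcal{A}_S(x):=DS(\pi_S(x))$ over $(\Sigma,\sigma)$ and apply the cocycle theorems directly. But the coding of an expanding repeller is from the \emph{one-sided} shift $\Sigma^{+}$, and $DS(\pi_S(\cdot))$ takes values in linear maps between \emph{varying} tangent spaces; one must first choose local trivializations $L_j$ to get a genuine $GL(d,\R)$-valued cocycle, and then pass to the natural extension $(\Sigma,\sigma)$. The paper in fact works with the cocycle $\mathcal{C}$ of \emph{inverse} derivatives over $(\Sigma,\sigma^{-1})$, see \eqref{def:cocliminv}, because that is the cocycle whose constancy along stable sets and whose holonomies match the pre-orbit typicality notion \eqref{defholnoninv}, and it records the equality of entropies and Lyapunov exponents across the coding via \cite[Lemma 5.8]{park2020quasi}. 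These are bookkeeping steps, but they need to be spelled out before Theorems~\ref{thm:Main1} and \ref{thm:Main2} can legitimately be invoked.
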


Similar results can be derived for $C^1$-open sets of Anosov diffeomorphisms. Let us introduce the notation. 
Recall that a $C^1$-diffeomorphism  $T$ on a compact manifold $M$ is called an \emph{Anosov diffeomorphism} if there exist a $D T$-invariant splitting $T M=E^s \oplus E^u$ and constants $C>0, \zeta \in(0,1)$ such that 
$$
\left\|\left.DT^n(x)\right|_{E_x^s}\right\| \leq C \zeta^n \text { and }\left\|\left.D_x f^{-n}\right|_{E^u_x}\right\| \leq C \zeta^n 
$$
for every $x\in M$ and $n\ge 1$.
 Using that 
the unstable and stable bundles $E^s$ and $E^u$ of a  $C^{1+\alpha}$ Anosov diffeomorphism $T$  are $\beta$-H\"older continuous, for some $\beta \in(0, \alpha]$ (cf. \cite{HP}), it is possible to show that 
 the restrictions of the derivative cocycle of a $C^{1+\alpha}$ Anosov diffeomorphism to the stable and unstable subbundles can be modeled by a H\"older continuous cocycle over a subshift of finite type (cf. \cite{Mohammadpour-Varandas}). 
%
We will say the derivative cocycle $\left.D T\right|_{E^u}$ is \emph{fiber-bunched} if there exists $n\ge 1$ such that
\begin{equation}
\label{eq:bunchingDF}
\left\|\left.DT^n(x)\right|_{E_x^u}\right\| \cdot\left\|\left(\left.DT^n(x)\right|_{E_x^u}\right)^{-1}\right\| \cdot \max \left\{\left\|\left.DT^n(x)\right|_{E_x^s}\right\|^\beta,\left\|\left(\left.DT^n(x)\right|_{E_x^u}\right)^{-1}\right\|^\beta \right\}<1 .
\end{equation}
When $\left.D T\right|_{E^u}$ is fiber-bunched, the canonical stable and unstable holonomies for the cocycle $\left.D T\right|_{E^u}$ converge and are $\beta$-H\"older continuous (see Subsection~\ref{subsec:fiberbunching} for the definition of holonomy maps). 

We obtain the following:

\begin{maintheorem}\label{main:t1}
 \emph{Let $T$ be a transitive $C^{1+\alpha}$ Anosov diffeomorphism of a compact Riemmanian manifold $M$ such that $\left.D T\right|_{E^u}$ is fiber-bunched. If $\left.D T\right|_{E^u}$ is a typical cocycle, then there exists $t_*>0$ such that:
\begin{itemize}
\item[(a)]For every $t\in (-t_{\ast}, \infty)$,
\begin{itemize}
    \item[(a1)]there exists a unique equilibrium state ${\mu_t}$ for the potentials $\{ t \log\|D T^n\|_{E^u} \}_{n\ge 1}$ and it is a Gibbs measure;  
    \item[(a2)]  the equilibrium state ${\mu_t}$ is $\psi$-mixing, totally ergodic, quasi-Bernoulli and weak Bernoulli.
    \end{itemize}
    \item[(b)]$(-t_*,t_*)\ni t \mapsto P(\sigma, t\log\|D T^n|_{E^u}\|)'= \lambda_1({\mu_t}, D T|_{E^u}))$ is real analytic.
    \end{itemize}
}
\end{maintheorem}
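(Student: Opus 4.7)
The proof plan is to reduce to Theorems~\ref{thm:Main1} and~\ref{thm:Main2} via symbolic coding. Since $T$ is transitive $C^{1+\alpha}$ Anosov, it admits a finite Markov partition yielding a bounded-to-one H\"older semiconjugacy $\pi:(\Sigma,\sigma)\to (M,T)$ from a topologically mixing two-sided subshift of finite type. Because the unstable bundle $E^u$ is $\beta$-H\"older for some $\beta\in(0,\alpha]$ (cf.~\cite{HP}), one can, after trivializing $E^u$ over each partition element, model the restriction $DT|_{E^u}$ as a $\beta$-H\"older continuous matrix cocycle $\mathcal{A}:\Sigma\to GL(d',\mathbb{R})$ with $d'=\dim E^u$, in such a way that $\log\|\mathcal{A}^n(x)\|=\log\|DT^n|_{E^u_{\pi(x)}}\|$ for $\pi(x)$ outside the (measure-zero) boundary of the partition, as carried out in~\cite{Mohammadpour-Varandas}.

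The next step is to verify that the hypotheses on $DT|_{E^u}$ translate to hypotheses on $\mathcal{A}$. The bunching condition~\eqref{eq:bunchingDF} for some iterate of $T$ gives exactly the fiber-bunching of $\mathcal{A}$ over $(\Sigma,\sigma)$, once the shift is re-coded so that the symbolic hyperbolicity constants match $\|DT^n|_{E^s}\|$ and $\|(DT^n|_{E^u})^{-1}\|$ along the stable/unstable directions (see Subsection~\ref{subsec:fiberbunching}). Similarly, the standing hypothesis that $DT|_{E^u}$ is typical passes to $\mathcal{A}$, since pinching and twisting along periodic orbits and heteroclinic connections are invariants of the semiconjugacy. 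In particular $\mathcal{A}$ is $1$-typical.

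We can now apply Theorems~\ref{thm:Main1} and~\ref{thm:Main2} to $\mathcal{A}$: there exists $t_*>0$ such that for every $t\in(-t_*,+\infty)$ the potential $t\Phi_{\mathcal{A}}$ admits a unique Gibbs equilibrium state $\widetilde{\mu}_t$ on $\Sigma$, the map $t\mapsto P(\sigma,t\Phi_{\mathcal{A}})$ is differentiable with derivative $\lambda_1(\widetilde{\mu}_t,\mathcal{A})$ and real analytic on $(-t_*,t_*)$, and $\widetilde{\mu}_t$ is $\psi$-mixing, totally ergodic, quasi-Bernoulli and weak Bernoulli. Set $\mu_t:=\pi_*\widetilde{\mu}_t$. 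Since $\pi$ is a bounded-to-one, H\"older semiconjugacy, $h_{\widetilde{\mu}_t}(\sigma)=h_{\mu_t}(T)$ and $\int\log\|\mathcal{A}^n\|\,d\widetilde{\mu}_t=\int\log\|DT^n|_{E^u}\|\,d\mu_t$, so the variational principles coincide, $\mu_t$ is the unique equilibrium state for the $T$-side non-additive potential, and the Gibbs property, $\psi$-mixing, total ergodicity, quasi-Bernoullicity, weak Bernoullicity, and analyticity of the pressure all transfer by pushforward. Conclusion (b) follows because the derivative of the topological pressure is preserved under the semiconjugacy.

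The main obstacle is the standard but technically delicate transfer of the Gibbs and $\psi$-mixing properties from $\widetilde{\mu}_t$ on $\Sigma$ to $\mu_t$ on $M$ through a coding that is finite-to-one rather than bijective: one must show that the set of points $x\in\Sigma$ with more than one preimage in a neighborhood (the boundary of the Markov partition) has $\widetilde{\mu}_t$-measure zero for every $t\in(-t_*,+\infty)$, so that the $\sigma$-algebras match up to $\mu_t$-null sets. This follows because $\widetilde{\mu}_t$ is a Gibbs measure with positive entropy and the boundary has empty interior and is contained in the forward orbit of a finite union of codimension-one pieces; combined with the $\psi$-mixing statement of Theorem~\ref{thm:Main2}, this guarantees that the supremum in the $\psi$-mixing limit on $M$, taken over $\mathcal{F}_{-\infty}^0$ and $\mathcal{F}_n^\infty$, converges to zero. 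Once this identification is made, parts (a1), (a2) and (b) are immediate from Theorems~\ref{thm:Main1} and~\ref{thm:Main2}.
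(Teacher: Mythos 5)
Your proposal follows essentially the same route as the paper: code the Anosov diffeomorphism by a two-sided subshift of finite type via a Markov partition, trivialize $E^u$ over each partition element to obtain a H\"older continuous cocycle $\mathcal{A}$ over $(\Sigma,\sigma)$, observe that fiber-bunching and $1$-typicality pass to $\mathcal{A}$, and then invoke Theorems~\ref{thm:Main1} and~\ref{thm:Main2}. Your closing paragraph on identifying $\sigma$-algebras modulo the boundary of the Markov partition is a careful elaboration of a step that the paper treats as standard and leaves implicit.
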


As a final comment, we note that similar results can be derived for $DT\mid_{E^s}$ (it is enough to consider the Anosov diffeomorphism $T^{-1}$ and to observe that the unstable subbundle for $T^{-1}$ is the stable subbundle $E^s$ for $T$).

\color{black}

\section{Preliminaries}\label{prel}

In Subsections~\ref{subsecSFT} to \ref{subsecmatrixc} we introduce some notation and recall basic properties on subshifts of finite type, weak Bernoulicity, projective spaces, and linear cocycles. Subsection~\ref{subsec:fiberbunching} addresses on the properties of invariant holonomies for fiber-bunched cocycles,
while the notion of typical cocycles is recalled in Subsection~\ref{subsec:typical}. Finally, in 
Subsection~\ref{Non-additive TF} we recall some necessary results on the non-additive thermodynamic formalism. 
The reader acquainted with these topics may choose to skip this section in a first reading, returning to it whenever necessary.

\subsection{Subshifts of finite type} \label{subsecSFT}

Given a transition matrix $Q \in \mathcal M_{k\times k}(\{0,1\})$ the one-sided
subshift of finite type determined by $Q$ is a  left shift map
$\sigma : \Sigma_{Q}^{+} \to  \Sigma_{Q}^{+} $ defined as $\sigma(x_n)_{n\in \N}$ = $(x_{n+1})_{n\in \N}$, acting on the space of sequences
\[
\Sigma_{Q}^{+}:=\Big\{x=(x_{i})_{i\in \N} : x_{i}\in \{1,...,k\} \hspace{0.2cm}\textrm{and} \hspace{0.1cm}Q_{x_{i}, x_{i+1}}=1 \hspace{0.2cm}\textrm{for all}\hspace{0.1cm}i\in \N\Big\}.
\]
Similarly, we define a two-sided subshift of finite type  $(\Sigma_{Q}, \sigma)$  on the space 
\[
\Sigma_{Q}:=\Big\{x=(x_{i})_{i\in \Z} : x_{i}\in \{1,...,k\} \hspace{0.2cm}\textrm{and} \hspace{0.1cm}Q_{x_{i}, x_{i+1}}=1 \hspace{0.2cm}\textrm{for all}\hspace{0.1cm}i\in \Z\Big\}.
\]
For notational simplicity, we shall omit the dependence of $Q$ on the space $\Sigma_Q$ and $\Sigma_{Q}^{+}$ and always write $\Sigma$ and $\Sigma^{+}$, respectively. Letting $\Pi: \Sigma \rightarrow \Sigma^{+}$ denote the standard projection, we have $\Pi x=\hat{x}$ where $x=\left(x_i\right)_{i \in \mathbb{Z}}$ and $\hat{x}=\left(x_i\right)_{i \in \mathbb{N}_0}$.  

\medskip
 For any  $x \in \Sigma$ and $n\ge 1$, we define $n$-cylinder defined by $x$ as
$$[x]_n:=\left\{\left(y_i\right)_{i \in \mathbb{Z}} \in \Sigma: x_i=y_i \text { for all } 0 \leq i \leq n-1\right\} .$$

We denote by $\Sigma_n$ the set of all admissible words of length $n$ of $\Sigma$ and write $\Sigma^*=\bigcup_{n\ge 1} \Sigma_n$. For each $I\in \Sigma$ let $|I|$ denote the length of the word $I$ and let
$$
[I]=\bigg\{x\in \Sigma \colon x_j=i_j \; \text{for every } \, 0\le j \le |I|-1\bigg\}
$$
be the cyclinder set in $\Sigma_Q$ determined by $I$. The concatenation of two words $i \in \Sigma^* \cup \Sigma$ and
$j \in \Sigma*$ is denoted by $ij$.

Assume that $Q\in \mathcal M_{k\times k}(\{0,1\})$
is a transition matrix and $(\Sigma_Q, \sigma)$ is the corresponding subshift of finite type.
It is well known that the primitivity of $Q$ (i.e., the existence of an integer $n\ge 1$ such that all the entries of $Q^n$ are positive) is equivalent to
the property of the subshift of finite type $(\Sigma_Q, \sigma)$ to be \textit{topologically mixing}.
%
%
%
%
%
Endow the space $\Sigma=\Sigma_Q$ with the following metric $d$: for $x=(x_{i})_{i\in \Z}, y=(y_{i})_{i\in \Z} \in \Sigma$
\begin{equation}\label{metric}
d(x,y):= 2^{-\inf{\{k\ge 0} \colon \text{$x_{i}\neq y_{i}$ for some $|i| \le k$} \}}.
\end{equation} 
For any $\hat{x} \in \Sigma^{+}$, we likewise define the cylinder $[\hat{x}]_n$ as $\Pi\left([x]_n\right)$ for any $x \in \Pi^{-1}(x)$. Similarly, we equip the one-sided subshift $\Sigma^{+}$ with the same metric. 

\smallskip
The \textit{local stable set} at $x=(x_{i})_{i\in \Z}$ is the set
$
W_{\loc}^{s}(x)=\{(y_{n})_{n\in \Z} : y_{n}=x_{n} \hspace{0,2cm}\textrm{for all}\hspace{0.2cm} n\geq 0\} 
$
while the \textit{local unstable set}
at $x=(x_{i})_{i\in \Z}$ is defined by
$ W_{\loc}^{u}(x)=\{(y_{n})_{n\in \Z} : y_{n}=x_{n} \hspace{0,2cm}\textrm{for all}\hspace{0.2cm} n \leq 0\} .$ 
The global stable (resp. global unstable) manifold of $x \in \Sigma$ is 
\[W^{s}(x):=\left\{y \in \Sigma: \sigma^{n} y \in  W_{\loc}^{s}(\sigma^{n}(x))\text { for some } n \geq 0\right\},\]
\[
(\text{resp.} \quad W^{u}(x):=\left\{y \in \Sigma: \sigma^{n} y \in W_{\loc}^{u}(\sigma^{n}(x)) \text { for some } n \leq 0\right\}).
\]
The two-side subshift of finite type  $(\Sigma, \sigma)$ 
equipped with the metric $d$ in \eqref{metric}
is a hyperbolic homeomorphism (see \cite[Subsection 2.3]{AV10} for more details) and it has a local product structure defined by
\begin{equation}
    \label{eq:prodst}
[x, y]:=W_{\loc}^{u}(x) \cap W_{\loc}^{s}(y)
\end{equation}
for any $x=(x_{i})_{i\in \Z}, y=(y_{i})_{i\in \Z} \in \Sigma$ so that $x_{0}=y_{0}$. 

\subsection{Totally ergodic and weak Bernoulli measures}\label{sebsec:weakly Bern}
We denote by $\E(\sigma)$ the space of all ergodic measures
on $\Si$. Moreover, we say that an invariant measure $\mu \in \M(\sigma)$ is \textit{totally ergodic} if $\mu$ is ergodic with respect to $\sigma^n$, for every $n \in \N$.

Let $\mu \in \M(\Sigma)$. Two partitions $\mathcal{P}$ and $Q$ are called {\em $\varepsilon$-independent} if
$$
\sum_{P \in \mathcal{P}, Q \in \mathcal{Q}}|\mu(P \cap Q)-\mu(P) \mu(Q)|<\varepsilon
$$
Let 
\begin{equation}\label{definition of U}
    \mathcal{U}=\left\{U_1, \ldots, U_n\right\}\text{ be the partition of } \Sigma \text{ with }
U_j=\left\{x \in \Sigma: x_0=j\right\}.
\end{equation}

The partition $\mathcal{U}$ is called \textit{weak-Bernoulli} (for $\sigma$ and $\mu$) if for every $\varepsilon>0$ there is an $N(\varepsilon)$ so that
$$
\mathcal{P}=\mathcal{U} \vee \sigma^{-1} \mathcal{U} \vee \cdots \vee \sigma^{-s} \mathcal{U} \quad \text { and } \quad \mathcal{Q}=\sigma^{-t} \mathcal{U} \vee \cdots \vee \sigma^{-t-r} \mathcal{U}
$$
are $\varepsilon$-independent for all $s \geq 0, r \geq 0, t \geq s+N(\varepsilon)$. We say that an invariant measure $\mu$ is {\em weak
Bernoulli} if the standard partition $\mathcal{U}$ is weak Bernoulli. A well-known theorem of Friedman and Ornstein \cite{Friedman70} states that if $\mathcal{U}$ is weak-Bernoulli, then $(\sigma, \mu)$ is conjugate to a Bernoulli shift.

\subsection{Metrics on projective spaces}
\label{subsec:projective}

Let $d_{\P}$ be the metric on $\P$ given by 
$$
d_{\P}(\bar{u}, \bar{v})=\frac{\|u \wedge v\|}{\|u\|\|v\|}, \quad \text{for every $\bar{u}=\mathbb{R} u, \bar{v}=\mathbb{R} v \in \P$}.
$$
We recall that
for every $\bar{u}, \bar{v} \in \P$, we have 
\[\|\bar{u}-\bar{v}\|=2 \sin \left(\frac{\theta}{2}\right) \quad \text { and } \quad\|\bar{u} \wedge \bar{v}\|=\sin \theta\]
where $\theta \in [0, \pi)$  is the angle between the unit vectors $\bar{u}$ and $\bar{v}$.
Let $v^\perp \subset \P$ denote the projection onto projective space of the subspace orthogonal to a given vector $v$. Then the distance between $\bar{u}$ and $\bar{v}$ in $\P$ can be equivalently written as
\[
\delta_{\P}(\bar{u}, \bar{v}) = d_{\P}(\bar{u}, v^\perp) := \inf_{\bar{w} \in v^\perp} d_{\P}(\bar{u}, \bar{w}) .
\]

For any matrix $A \in \mathrm{GL}_d(\mathbb{R})$, let $\overline{v_{+}}(A)$ denote the element of $\mathbb{P}$ corresponding to the direction of the largest singular value of $A$, which in particular satisfies
$ 
\left\| A^* \frac{u}{\|u\|} \right\| = \left\| A^* \right\|,
$ 
where $A^*$ stands for the adjoint (i.e., transpose) of a matrix $A$.
\color{black}

We also define the \emph{spectral gap} of $A$ by
\[
\gamma_{1,2}(A) := \frac{\sigma_2(A)}{\sigma_1(A)},
\]
and observe that $\gamma_{1,2}(A^*) = \gamma_{1,2}(A)$. The following lemma will be instrumental in estimating the loss of multiplicativity in the inequality $\|Au\| \le \|A\| \|u\|$, for $u\in \mathbb R^d$. 

\begin{lem}[{\cite[Lemma 14.2]{benoist2016random}}]\label{benoist} For every $A \in \mathrm{GL}_d(\mathbb{R}), \bar{u}=\mathbb{R} u, \bar{v}=\mathbb{R} v \in \P$ one has
\begin{itemize}
  \item[(i)] $\delta_{\P}\left(\overline{v_{+}}\left(A^*\right), \bar{v}\right) \leq \frac{\|A v\|}{\|A\|\|v\|} \leq \delta_{\P}\left(\overline{v_{+}}\left(A^*\right), \bar{v}\right)+\gamma_{1,2}(A)$
  \item[(ii)] $\delta_{\P}\left(\bar{u}, \overline{v_{+}}(A)\right) \leq \frac{\left\|A^* u\right\|}{\|A\|\|u\|} \leq \delta_{\P}\left(\bar{u}, \overline{v_{+}}(A)\right)+\gamma_{1,2}(A)$.
  \item[(iii)] $d_{\P}\left(\overline{A^* u}, \overline{v_{+}}\left(A^*\right)\right) \delta_{\P}\left(\bar{u}, \overline{v_{+}}(A)\right) \leq \gamma_{1,2}(A)$.
\end{itemize}

\end{lem}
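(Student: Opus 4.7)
The plan is to work directly from the singular value decomposition $A=U\Sigma V^*$ with $\Sigma=\mathrm{diag}(\sigma_1,\dots,\sigma_d)$ and $U=[u_1|\cdots|u_d]$, $V=[v_1|\cdots|v_d]$ orthogonal. Under the convention fixed in the excerpt, $\overline{v_+}(A)=\bar u_1$ is the top left singular vector (the direction realizing $\|A^*\|$) and, symmetrically, $\overline{v_+}(A^*)=\bar v_1$; moreover $\gamma_{1,2}(A^*)=\gamma_{1,2}(A)$ since $A$ and $A^*$ share the same singular values. With these identifications in hand, all three inequalities reduce to elementary trigonometry together with the pointwise estimate $\sqrt{a^2+b^2}\le a+b$ valid for $a,b\ge 0$.

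For (i), I would write a unit vector $v$ as $v=(\cos\theta)\,v_1+(\sin\theta)\,n$ with $n\in v_1^{\perp}$ of unit norm and $\theta=\angle(v,v_1)$; by the definition of $\delta_{\P}$ as the distance to the orthogonal hyperplane one has $\delta_{\P}(\bar v_1,\bar v)=\cos\theta$. Since $Av_1=\sigma_1 u_1$ is orthogonal to $An\in\mathrm{span}(u_2,\dots,u_d)$, and $\|An\|\le\sigma_2$, I obtain
\[
\frac{\|Av\|^2}{\|A\|^2}=\cos^2\theta+\frac{\|An\|^2}{\sigma_1^2}\sin^2\theta.
\]
The lower bound in (i) is then immediate, while the upper bound follows from
\[
\sqrt{\cos^2\theta+\gamma_{1,2}^2\sin^2\theta}\le\cos\theta+\gamma_{1,2}\sin\theta\le\cos\theta+\gamma_{1,2}.
\]
Part (ii) is the same argument applied to $A^*$: decompose $u$ along $u_1$ and $u_1^{\perp}$, use $A^*u_1=\sigma_1 v_1$ and the fact that the restriction of $A^*$ to $u_1^{\perp}$ has operator norm at most $\sigma_2$.

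For (iii) I would recycle the decomposition of (ii), namely $u=(\cos\phi)\,u_1+(\sin\phi)\,m$ with $m\in u_1^{\perp}$ unit, $\delta_{\P}(\bar u,\bar u_1)=\cos\phi$, and $A^*u=\sigma_1\cos\phi\,v_1+\sin\phi\,A^*m$ with $A^*m\in v_1^{\perp}$ and $\|A^*m\|\le\sigma_2$. Wedging with $v_1$ annihilates the leading term and gives $\|A^*u\wedge v_1\|\le\sigma_2\sin\phi$, while the lower bound from (ii) yields $\|A^*u\|\ge\sigma_1\cos\phi$. Dividing,
\[
d_{\P}\bigl(\overline{A^*u},\bar v_1\bigr)=\frac{\|A^*u\wedge v_1\|}{\|A^*u\|}\le\frac{\sigma_2\sin\phi}{\sigma_1\cos\phi}=\gamma_{1,2}(A)\tan\phi,
\]
and multiplying by $\cos\phi=\delta_{\P}(\bar u,\bar u_1)$ delivers (iii).

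The only real subtlety I foresee lies in keeping conventions straight throughout — in particular distinguishing $d_{\P}$ (the sine of the angle, small for aligned vectors) from $\delta_{\P}$ (the cosine, small for nearly orthogonal vectors), and tracking left versus right singular vectors as one passes between $A$ and $A^*$. Once those conventions are pinned down from the defining relation $\|A^*(v_+(A)/\|v_+(A)\|)\|=\|A^*\|$, no further obstacle arises and the argument is entirely algebraic.
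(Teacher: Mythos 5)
Your proof is correct and complete. Note, however, that the paper does not prove this lemma at all: it is imported verbatim from Benoist--Quint \cite[Lemma 14.2]{benoist2016random}, so there is no in-paper argument to compare against. Your SVD-based derivation is a valid self-contained substitute: the identifications $\overline{v_+}(A)=\bar u_1$, $\overline{v_+}(A^*)=\bar v_1$ and $\delta_{\P}(\bar u,\bar v)=|\langle u,v\rangle|/(\|u\|\|v\|)$ are consistent with the conventions fixed in the paper, the orthogonality $An\in u_1^{\perp}$ for $n\in v_1^{\perp}$ is what makes the Pythagorean identity in (i) exact, and the elementary bound $\sqrt{a^2+b^2}\le a+b$ closes the upper estimate. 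In (iii) the intermediate division by $\cos\phi$ is harmless: if $\cos\phi=0$ the left-hand side vanishes and the inequality is trivial, and otherwise your computation goes through; likewise the degenerate case $\sigma_1=\sigma_2$ makes all three inequalities trivial since $\gamma_{1,2}(A)=1$ dominates every quantity involved.
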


\section{Linear cocycles}
\label{subsecmatrixc}
\subsection{Continuous linear cocycles.}
We consider continuous \textit{linear cocycles (or linear cocycles)} $\mathcal{A}: \Sigma \rightarrow GL(d, \R)$ over $(\Sigma, \sigma)$.
Consider the skew-product $F:\Sigma \times \mathbb R^d \to \Sigma \times \mathbb R^d$ given by $F(x, v)=(\sigma(x), \mathcal{A}(x)v)$. For each $n\ge 1$, the $n$-th iterate of $F$ is  $F^n(x, v)=(\sigma^{n}(x), \mathcal{A}^{n}(x)v)$ 
where
$$
\mathcal{A}^{n}(x):=\mathcal{A}\left(\sigma^{n-1} (x)\right) \ldots \mathcal{A}(x).
$$
In case 
$\sigma$ is invertible then so is $F$ and $F^{-n}(x)=(\sigma^{-n}(x), \mathcal{A}^{-n}(x)v)$ for each $n\geq1$, where
\[\mathcal{A}^{-n}(x):=\mathcal{A}(\sigma^{-n}(x))^{-1}\mathcal{A}(\sigma^{-n+1}(x))^{-1} \dots \, \mathcal{A}(\sigma^{-1}(x))^{-1}.\]

Another system related to a given cocycle $\A: \Sigma \to \glr$ over a topological mixing subshift of finite type $(\Sigma, \sigma)$ is the skew product $F_{\mathcal{A}}: \Sigma \times \P \rightarrow \Sigma \times \P$ defined by
\begin{equation}\label{skew-product}
  F_{\mathcal{A}}(x, v):=(\sigma (x), \overline{\mathcal{A}(x) v}) .
\end{equation}

It is clear that the action of $\mathcal{A}^n(x)$ on the projective space $\P$ is encoded in the second coordinate of the iterations of the skew product $F_{\mathcal{A}}$. Similarly, we denote the skew product on $\Sigma^{+} \times \P$ by $\hat{F_{\mathcal{A}}}$. We also denote by $\M\left(F_{\A}\right)$  the set of Borel probability measures on $\Sigma \times \P$.

The submultiplicativity of the norm $\|\cdot\|$ implies that  $\| \A \|$ is submultiplicative in the sense that for any $m, n\ge 1$, and $x \in \Sigma$,
$$
0 < \|\mathcal{A}^{n+m}(x)\| \leq \|\mathcal{A}^{n}(T^{m}(x))\| \|\mathcal{A}^{n}(x)\|.
$$
We recall that
\begin{equation}
\label{eq:geometricpotential}
    \Phi_{\A}=\left\{\log \| \A^{n}\| \right\}_{n\ge 1}. 
\end{equation}
\subsubsection{Special classes of linear cocycles}
There are two special and natural classes of linear cocycles. The first is the class of \textit{one-step cocycles}, defined as follows.   Given a $k$-tuple of matrices $\textbf{A}=(A_{1},\ldots,A_{k})\in \glr^{k}$ , we associate with it the locally constant map $\mathcal{A}:\Sigma \rightarrow \glr$ given by $\mathcal{A}(x)=A_{x_{0}},$ that means the linear cocycle $\mathcal{A}$ depends only on the zero-th symbol $x_0$ of $(x_{l})_{l\in \Z}$. It is clear that 
$$
\mathcal A^n(x) = A_{x_{n-1}} \cdot \dots \cdot A_{x_{1}} \cdot A_{x_{0}}
$$
for any $x=(x_{n})_{n\in \Z}\in \Sigma$. One-step cocycles are usually considered the simplest class of linear cocycles and model random products of a finite collection of matrices.

\medskip
A second relevant class of linear cocycles is the so-called class of \emph{dynamical cocycles}, arising from smooth dynamical systems.
In fact,
if 
$f: M \rightarrow M$ is a smooth map of a closed Riemannian manifold $M$, the derivative cocycle $D f$ is a cocycle generated by the map $\mathcal{A}(x)=$ $D_x f: T_x M \rightarrow T_{f x} M$. Additionally, if the map $f$ is invertible, then $\A$ is an invertible cocycle over $f$.
More generally, one can consider for each $D f$-invariant sub-bundle $E \subset T M$, the linear cocycle $\left.D f\right|_E$, whose regularity is inherited from the regularity of the fiber-bundle $M\ni x \mapsto E_x$. 
\subsection{Slowest Oseledets' subspaces}

Given a $GL(d,\mathbb R)$-valued linear cocycle
$\mathcal{A}: \Sigma \rightarrow \text{GL}(d,\R)$ over a topologically mixing subshift of finite type $(\Sigma,\sigma)$ and $[I] \in \mathcal{L}$, we define
\begin{equation}\label{def-A(I)}
  \|\mathcal{A}(I)\|:=\max _{x \in[I]}\left\|\mathcal{A}^{|I|}(x)\right\|.
\end{equation}
The \emph{adjoint inverse cocycle} $\mathcal{A}_*^{-1}(x)$ is defined as
$$
\mathcal{A}_*^{-n}(x):=\left[\mathcal{A}^n(x)^{-1}\right]^*=\left[\mathcal{A}^n(x)^*\right]^{-1}.
$$
We also define
\begin{equation}
    \label{eq:deftransposeadjA}
\mathcal{A}^{[n]}(x):=\left[\mathcal{A}^n(x)\right]^*=\mathcal{A}(x)^* \ldots \mathcal{A}\left(\sigma^{n-1} x\right)^*.
\end{equation}

\begin{rem}
Note that 
$$
\mathcal{A}^{[n]}(x)=\mathcal{A}_*^{-n}(x)^{-1}
\quad \text{for every $n\ge 1$ and $x\in \Sigma$,} 
$$
a fact that will be used frequently throughout the paper.     
\end{rem}

\begin{defn}
\label{def:slowestOsel}
 For $x \in \Sigma$, we say that the \emph{slowest Oseledets' subspace} of the cocycle $\mathcal{A}_*^{-1}$ at $x$ is well defined if 
 \begin{enumerate}
     \item  $\lim _{n \rightarrow \infty} \frac{1}{n} \log \sigma_d\left(\mathcal{A}_*^{-n}(x)\right)$ exists; and
\item there exists a subspace $W_x \subseteq \mathbb{R}^d$ such that:
\begin{itemize}
    \item[(i) ]
$\lim _{n \rightarrow \infty} \frac{1}{n} \log \left\|\mathcal{A}_*^{-n}(x) u\right\|=\lim _{n \rightarrow \infty} \frac{1}{n} \log \sigma_d\left(\mathcal{A}_*^{-n}(x)\right)$
for all unitary $u \in W_x$; 
\item[(ii)] $\liminf_{n \rightarrow \infty}$ $\frac{1}{n} \log \left\|\mathcal{A}_*^{-n}(x) u \right\|>\lim _{n \rightarrow \infty} \frac{1}{n} \log \sigma_d\left(\mathcal{A}_*^{-n}(x)\right)$ for all unit vectors $u \notin W_x$, 
\end{itemize}
 \end{enumerate}   
 in which case $\overline{\xi_*}(x)$ denotes the projectivization of the subspace $W_x$ in $\P$.
\end{defn}
 
By Oseledets' theorem \cite{Oseledets}, one knows that 
given a $\Sigma$-invariant and ergodic probability measure $\mu$, the projective subsspace $\overline{\xi_*}(x)$ is well defined for $\mu$-a.e. $x\in \Sigma$, that 
$$
\lim _{n \rightarrow \infty} \frac{1}{n} \log \sigma_d\left(\mathcal{A}_*^{-n}(x)\right)=\lambda_d\left(\mu, \mathcal{A}_*^{-1} \right)=\lambda_1(\mu, \mathcal{A}),
$$
and that $\mathcal{A}_*^{-1}(x) \overline{\xi_*}(x)=\overline{\xi_*}(\sigma x)$ 
for $\mu$-a.e. $x \in \Sigma$.
Similarly, if \( \overline{\xi_*}(x) \) is well defined and  \( y \in \sigma^{-1}(x) \) then  \( \overline{\xi_*}(y) \) is well defined, 
and satisfies
$ 
\mathcal{A}_*^{-1}(y)\, \overline{\xi_*}(y) = \overline{\xi_*}(x).
$ 

\medskip
In the case where $\overline{\xi_*}(x)$ is 0-dimensional (i.e., the subspace $ W_x\subset \mathbb R^d$ from Definition~\ref{def:slowestOsel} is one-dimensional), we choose and fix a vector 
$ \xi_*(x) \in W_x$
such that $ \|\xi_*(x)\| = 1 $.
Define the set
\begin{equation}
    \label{eq:defY}
Y := \left\{ x \in \Sigma : \overline{\xi_*}(x) \text{ is well defined and 0-dimensional} \right\}.
\end{equation}
Note that $\sigma^{-1}(Y)=Y$. The latter fact will be used later.

\subsection{Fiber bunching and holonomies}\label{subsec:fiberbunching}
 Given $\theta>0$, a linear cocycle $\mathcal{A}:\Sigma \rightarrow GL(d, \R)$ is \emph{$\theta$-H\"older continuous}  if there exists $C_0>0$ such that
\begin{equation}\label{hol}
 \|\mathcal{A}(x)-\mathcal{A}(y)\|\leq C_0d(x,y)^{\theta} \hspace{0,2cm} \quad \forall x,y \in \Sigma.
\end{equation}
We denote by $C^\theta(\Sigma, G L(d, \mathbb{R}))$ the vector space of $\theta$-H\"older continuous linear cocycles over the topologically mixing subshift of finite type $(\Sigma, \sigma)$.

\smallskip
We say that the cocycle $\mathcal{A} \in C^{\theta}(\Sigma, GL(d, \R))$ over $(\Sigma, \sigma)$ is \textit{fiber-bunched} if 
\begin{equation}\label{fiber}
\|\mathcal{A}(x)\|\|\mathcal{A}(x)^{-1}\|< 2^{\theta}
\quad\text{for every $x\in \Sigma$}
\end{equation} 
(note that the constant $2$ appearing above is a hyperbolicity constant for the shift map $\sigma$).

\begin{rem}
\label{rmk:pert-conformal}
A conformal cocycle $\A$ (i.e. $\A(x)$ is a scalar multiple of an orthogonal matrix, for all $x\in \Sigma$) verifies $\|\mathcal{A}(x)\| \cdot\left\|\mathcal{A}(x)^{-1}\right\|=1$ for all $x \in \Sigma$. Therefore, small $C^0$-perturbations of H\"older continuous conformal cocycles are fiber-bunched.    
\end{rem}

Let us denote the \emph{set of fiber-bunched cocycles}
by 
$$
C_b^{\theta}\left(\Sigma, \mathrm{GL}_d(\mathbb{R})\right):=\left\{\mathcal{A} \in C^{\theta}\left(\Sigma, \mathrm{GL}_d(\mathbb{R})\right): \mathcal{A} \text { is fiber-bunched}\right\}.
$$
It is clear that the latter is a $C^0$-open subset of $C^{\theta}\left(\Sigma, \operatorname{GL}_d(\mathbb{R})\right)$.



\begin{defn}\label{holonomy}
Given $\mathcal A\in C_b^{\theta}(\Sigma, GL(d, \R))$,
$x\in \Sigma$ and $y\in W_{\loc}^{s}(x)$
the \textit{local stable holonomy} 
$H_{y \leftarrow x}^{s} \in GL(d, \R)$ is defined
by the limit (in case the limit exists)
$$
H_{y \leftarrow x}^{s}:=\lim _{n \rightarrow+\infty} \mathcal{A}^n(y)^{-1} \mathcal{A}^n(x).
$$
Similarly, the \emph{local unstable holonomy} $H_{y \leftarrow x}^{u}$ is likewise defined as (in case the limit exists)
$$
H_{y \leftarrow x}^{u}:=\lim _{n \rightarrow-\infty} \mathcal{A}^n(y)^{-1} \mathcal{A}^n(x)
$$
for any $x\in \Sigma$ and $y\in W_{\loc}^{u}(x)$.
\end{defn}

Some comments are in order. First, even though the stable and unstable holonomies depend on the linear cocycle $\mathcal A$, we shall omit its dependence on $\mathcal A$ 
for notational simplicity. Second, it follows from \cite{bonatti2004lyapunov} that stable and unstable holonomies exist for fiber-bunched linear cocycles, that stable holonomies satisfy
\begin{itemize}
\item[a)]$H_{x \leftarrow x}^{s}=Id$ and $H_{z \leftarrow y}^{s} \circ H_{y \leftarrow x}^{s}=H_{z \leftarrow x}^{s}$ for any $z,y \in W_{\loc}^{s}(x)$.
\item[b)] $\mathcal{A}(y)\circ H_{y \leftarrow x}^{s}=H_{\sigma(y) \leftarrow \sigma(x)}^{s}\circ \mathcal{A}(x).$
\item[c)] $(x, y, v)\mapsto H^s_{y\leftarrow x}(v)$ is continuous.
\end{itemize}
and that similar properties hold for unstable holonomies (with $s$ and $\sigma$ replaced by $u$ and $\sigma^{-1}$, respectively).
Third, one can use item $b)$ above to one can extend the definition to the global stable holonomy $H_{y\leftarrow x}^{s}$ for points $y\in W^{s}(x)$ 
as
\begin{equation}\label{extension of holonomy}
H_{y\leftarrow x}^{s}=\mathcal{A}^{n}(y)^{-1} \circ H_{\sigma^{n}(y)\leftarrow \sigma^{n}(x)}^{s}\circ \mathcal{A}^{n}(x),
\end{equation} 
where 
$n\in \N$ is large enough such that $T^{n}(y)\in W_{\loc}^{s}(T^{n}(x))$ (the global unstable holonomy can be defined similarly).
Finally, the 
holonomies vary $\theta$-H\"older continuously (see \cite{KS}), which means that there exists a constant $C > 0$ such that 
\begin{itemize}
\item[d)] $\|H_{y \leftarrow x}^{s} - I\| \leq C d(x,y)^{\theta}$ for every $y \in W_{\loc}^{s}(x)$ (and an analogous statement for unstable holonomies).
\end{itemize}

\begin{rem}
In the simpler context of one-step cocycles, stable and unstable holonomies always exist (see \cite[Proposition 1.2]{bonatti2004lyapunov} and \cite[Remark 1]{Moh22}).
\end{rem}

The existence and continuity of the canonical holonomies guarantee that $\mathcal{A}$ has the following bounded distortion property: there exists $C>0$ such that for any $n\ge 1$ and $x, x \in \Sigma$ with $y \in[x]_n$, we have
\begin{equation}\label{BDD-dis}
C^{-1} \leq \frac{\left\|\mathcal{A}^n(x)\right\|}{\left\|\mathcal{A}^n(y)\right\|} \leq C.
\end{equation}

Lastly, we show that every fiber-bunched linear cocycle $\mathcal{A}: \Sigma \rightarrow \mathrm{GL}_d(\mathbb{R})$ over the subshift $(\Sigma,\sigma)$ is cohomologous to a linear cocycle over the one sided subshift $(\Sigma^+,\sigma)$.
Recall that $\Sigma \subset \{1,2,\dots,k\}^{\Z}$.
For each $1\le i \le k$, fix $\eta^i \in \Sigma$ with $\left(\eta^i\right)_0=i$. Given $x=\left(x_i\right)_{i \in \mathbb{Z}} \in \Sigma$, set
$ 
x_\eta:=\left[\eta^{x_0}, x\right].
$ 
Then consider the cocycle $\hat{\mathcal{A}}: \Sigma^+ \rightarrow \mathrm{GL}_d(\mathbb{R})$ given by
$$
\hat{\mathcal{A}}(x):=\mathcal{A}\left(x_\eta\right) .
$$
From its definition, $\hat{\mathcal{A}}$ is constant along the local stable set, that is,
\begin{equation}\label{stable holonomy}
\hat{\mathcal{A}}(x)=\mathcal{A}(y) \text { for every } y \in W_{\mathrm{loc}}^s(x).
\end{equation}
Set $\mathcal{C}(x):=H^s_{x_\eta \leftarrow x}$, where $H^s$ denotes local stable holonomies of 
$\mathcal{A}$. Notice that the cocycle $\mathcal C$ serves as a conjugacy between $\mathcal{A}$ and $\hat{\mathcal{A}}$, as
\begin{equation}
\label{eq:cohomology}
\hat{\mathcal{A}}(x) \circ \mathcal{C}(x)=\mathcal{C}(\sigma x) \circ \mathcal{A}(x)
\text { for every } x \in \Sigma.
\end{equation}
(we refer the reader e.g. to \cite[Corollary 1.15]{Bonatti2003} for further details).
It is clear from the construction that the cocycle $\hat \A$ is such that its stable holonomies coincide with the identity at all points, hence it can be identified with a cocycle over the one-sided subshift ($\Sigma^{+}, \sigma$).

\medskip
{\bf Throughout the remainder of the paper} we will assume that all fiber-bunched cocycles are constant along stable manifolds as described above and, by some abuse of notation, use $\mathcal{A}$ to denote the cocycles over both the two-sided and one-sided subshifts ($\Sigma, \sigma$) and ($\Sigma^{+}, \sigma$), respectively. There is no loss of generality 
as if the cohomological equation ~\eqref{eq:cohomology} holds then it is not hard to check that $\A$ and $\hat\A$ share the same Lyapunov exponents and pressure functions.

\subsection{Typical cocycles}
\label{subsec:typical} 
Let $\sigma:\Sigma \to \Sigma$ be a topologically mixing subshift of finite type.
 Assume that $p\in \Sigma$ is a periodic point of $\sigma$. We say $z\in \Sigma\setminus\{p\}$ is a \textit{homoclinic point} associated to $p$ if 
 $z\in W^{s}(p) \cap W^{u}(p)$. We denote the set of all homoclinic points of $p$ by
$\mathcal{H}(p)$. For each 
$z\in W_{\text{loc}}^{s}(p) \cap W_{\text{loc}}^{u}(p)$
consider the matrix 
\begin{equation}\label{eq:matrixtildeH}
 {\widetilde H}_{p}^z :=H_{p \leftarrow z}^s \circ H_{z \leftarrow p}^u    
\end{equation}
associated to the homoclinic loop. Such matrices have been used in a crucial way to prove simplicity of the Lyapunov spectrum for typical fiber-bunched linear cocycles (cf. \cite{bonatti2004lyapunov}).
In general, given $z\in \mathcal H(p)$, up to replacing $z$ by some backward iterate, we may Assume that $z\in W_{\text{loc}}^{u}(p)$ and $T^{n}(z)\in W_{\text{loc}}^{s}(p)$ for
some $n \geq 1$. 
Then, using 
\eqref{extension of holonomy}, 
one defines
\begin{equation}
\label{eq:analogueeq}
 {\widetilde H}_{p}^z =\mathcal{A}^{-n}(p)\circ H_{p \leftarrow \sigma^{n}(z)}^{s} \circ \mathcal{A}^{n}(z) \circ H_{z \leftarrow p}^{u}.    
\end{equation}

\begin{defn}\label{typical1}
Assume that $\mathcal{A} \in C_{b}^{\theta}(\Sigma, GL(d, \R))$. We say that $\mathcal{A}$ is \textit{1-typical} with respect to a pair $(p,z)$, where $p$ is a  periodic point for $T$ and $z\in \mathcal H(p)$ if:
\begin{itemize}
\item[(i)]  (\emph{pinching}) \color{black} 
the eigenvalues of  $\mathcal{A}^{per(p)}(p)$ have multiplicity $1$ and distinct absolute values,
\item[(ii)] (\emph{twisting})   the eigenvectors $\left\{v_{1}, \ldots, v_{d}\right\}$ of $\mathcal{A}^{per(p)}(p)$ are such that, for any $I, J \subset \{1, \ldots, d\}$ with $|I|+$ $|J| \leq d$, the set of vectors
$$
\left\{ {\widetilde H}_{p}^z  \left(v_{i}\right): i \in I\right\} \cup\left\{v_{j}: j \in J\right\}
$$
is linearly independent.
\end{itemize}
\end{defn}

\begin{defn}
    We say $\mathcal{A} \in C_{b}^{\theta}(\Sigma, GL(d, \R))$ is \textit{typical} if $\mathcal{A}^{\wedge t}$ is 1-typical with respect to the same pair $(p, z)$ for all $1 \leq t \leq d-1$.
\end{defn}

\begin{rem}\label{fixed point} For simplicity, we will always assume that $p$ in Definition~\ref{typical1} is a fixed point by considering the map $\sigma^{\text{per}(p)}$ and the cocycle $\mathcal{A}^{\text{per}(p)}$ if necessary (this is possible because powers of 1-typical cocycles are 1-typical). Moreover, for any homoclinic point $z \in \mathcal{H}(p)$, $\sigma^n (z)$ is a homoclinic point of $p$ for any  $n \in \mathbb{Z}$. 
\end{rem}

\begin{rem}
Observe that  $\mathcal{A}_*^{-1}$ is a 1-typical cocycle over $\sigma$ whenever $\A$ is a 1-typical cocycle (see \cite[Lemma 7.2]{bonatti2004lyapunov}).    
\end{rem}

\subsection{Non-additive thermodynamic formalism}\label{Non-additive TF}

In the context of linear cocycles, it appears naturally some sequences of sub-additive real-valued cocycles. In what follows we recall some notions from the thermodynamic formalism of non-additive sequences of potentials.

Let $(X, d)$ be a compact metric space and $T:X\rightarrow X$ be a continuous map. 
Recall that a family $\Phi=\{\phi_{n}\}_{n=1}^{\infty}$ of continuous potentials over the topological dynamical system $(X, T)$ is called \emph{sub-additive} 
(resp. \emph{super-additive}) if 
$\phi_{m+n}(x)
\le \phi_{m}(x)+\phi_n(T^m(x))$ 
(resp. $\phi_{m+n}(x)
\ge \phi_{m}(x)+\phi_n(T^m(x))$) 
for every $x\in\Sigma$ and $m,n\ge 1$.

\medskip
Let $\Phi=\{\phi_{n}\}_{n=1}^{\infty}$ be either a sub-additive or super-additive family of continuous potentials over a topological dynamical system $(X,T)$ and,
for each $n\in \N$, consider the metric $d_{n}$ on $X$ given by
\begin{equation}\label{new_metric}
 d_{n}(x, y)=\max\Big\{\, d(T^{k}(x), T^{k}(y)) : 0\le k \le n-1\,\Big\}.
\end{equation}
Given $\varepsilon>0$ and $n\ge 1$ we say that $E \subset X$ 
is an $(n,\varepsilon)$-\textit{separated  subset} if $d_{n}(x,y)> \varepsilon$ 
for any two points $x\neq y \in E$, and define
\[ P_{n}(T, \Phi, \varepsilon)=\sup \bigg\{\sum_{x\in E} e^{\phi_{n}(x)} : E \hspace{0,1cm}\textrm{is} \hspace{0,1cm}(n, \varepsilon) \textrm{-separated subset of }X \bigg\}.\]
The $\textit{topological pressure}$ of $\Phi$ is defined by
\begin{equation}\label{epsilon}
P(T,\Phi)=\lim_{\varepsilon \rightarrow 0}
\limsup_{n \to +\infty} \frac{1}{n} \log P_{n}(T, \Phi, \varepsilon),
\end{equation} 
where the limit in $\vep$ exists by monotonicity. 
%
\begin{rem}\label{(n, 1)separated sets}
If $(\Sigma, \sigma)$ is a subshift of finite type 
endowed with the metric $d$ defined in ~\eqref{metric} then $T$ is an expansive map with expansivity constant $\vep_0$ (meaning that for any $x\neq y\in \Sigma$ there exists $n\in \mathbb Z$ so $d(\sigma^n(x), \sigma^n(y))\ge \vep_0$) then the pressure $P(\sigma,\Phi)$ coincides with 
\[
 P^*(\Phi)  =\limsup _{n \rightarrow \infty} \frac{1}{n} \log \sup \left\{\sum_{x \in E}  e^{\phi_{n}(x)}:E \hspace{0,1cm}\textrm{is} \hspace{0,1cm}(n, \vep_0) \textrm{-separated subset of }\Sigma \right\}
\]
(see e.g. \cite{Walters}).
In consequence, if
$\A:\Sigma \to \glr$ is a fiber-bunched linear cocycle over a topologically mixing subshift of finite type $(\Sigma, \sigma)$, by bounded distortion (cf.~\eqref{BDD-dis}), one has
\begin{equation}\label{the definition of the topological pressure for fb-matrix}
    P(\sigma, t\Phi_{\A})=\lim_{n\to \infty}\frac{1}{n}\log \sum_{I \in \Sigma_n}\|\A(I)\|^t
\end{equation}
for every $t \in \R$ (notice $\Phi_\A$ is the family defined in 
~\eqref{eq:geometricpotential} and the limit exists by monotonicity). 
\end{rem}

\medskip
Cao, Feng and Huang \cite{CFH} proved a variational principle formula for the topological pressure of sub-additive potentials, while the counterpart for super-additive potentials was established by Cao, Pesin and Zhao \cite{CPZ}. 
More recently, in \cite{Mohammadpour-Varandas} we proved a variational principle for the generalized singular value function, which is a generalization of the family of potentials $\Phi_\A$ and is neither sub-additive nor supper-additive (we refer the reader to \cite[Theorem B]{Mohammadpour-Varandas} for more details).
Hence, for any $t \in \R$,
\begin{equation}\label{varitional}
P(T,t\Phi)=\sup \bigg\{h_{\mu}(T)+t\lambda(\mu, \Phi): \mu \in \mathcal{M}_{\text{inv}}(T) \bigg\},
\end{equation}
where $h_{\mu}(T)$ is the measure-theoretic entropy of $\mu$ and 
\begin{equation}\label{defchimu}
    \lambda(\mu, \Phi):=\lim_{n\to \infty}\frac1n \int \phi_{n}(x) \; d\mu(x).
\end{equation}

\medskip
Any invariant measure $\mu \in \mathcal{M}_{\text{inv}}(T)$ achieving the
supremum in \eqref{varitional} is called an \textit{equilibrium state (measure)} of $t\Phi$. In particular, we say that ${\hat \mu_t}$ is an {\em equilibrium state} for $t\Phi$ if 
\begin{equation}\label{equilibrium state}
    P(T,t\Phi )= h_{{\hat \mu_t}}(T)+t\lambda({\hat \mu_t}, \Phi).
\end{equation}

\begin{rem} For $t\geq 0$, the functional $\mu\mapsto \lambda(\mu,t\Phi)$ is upper semi-continuous (in the weak$^{*}$ topology). Hence, if the entropy $\operatorname{map} \mu \mapsto h_\mu(T)$ is upper semi-continuous (e.g. in case $T$ is a subshift of finite type), then there exists at least one equilibrium state for each sub-additive family of continuous potentials $t\Phi$. However, if $t<0$ the Lyapunov exponent functional 
$\mu\mapsto \lambda(\mu,t\Phi)$
is lower semi-continuous, and equilibrium states may fail to exist even if the entropy map is upper semi-continuous.
\end{rem}


\begin{defn}
\label{def:Gibbsmeasure}
 We say that a probability measure ${\hat \mu_t} \in \M(\sigma)$ is a {\em Gibbs measure} for $\Phi=\{\phi_n\}_{n=1}^\infty$ 
 if there exist $C_1, C_2>0$ and $P \in \mathbb{R}$ such that for any $n \in \N$ and $I \in \Sigma_n$
 \begin{equation}\label{orginal def of Gibbs}
   C_1 \leq \frac{{\hat \mu_t}\left([I]\right)}{\exp(-P n + \phi_n(x)) } \leq C_2  
\end{equation}
for any $x \in [I].$
\end{defn}

Assume that $t \in \R.$ Let $\mathcal{A}$ be a fiber-bunched cocycle and let $f$ be a Hölder continuous function over a topologically mixing subshift of finite type $(\Sigma^+, \sigma)$. We define the potential $\Phi_{\mathcal{A}, f, t} := \left(\phi_{t, n}^f\right)_{n \geq 1}$, where $$
\phi_{t, n}^f:=S_n f(x)+t \log \left\|\mathcal{A}^n(x)\right\|.$$

\begin{lem}\label{Gibbs measure is Eq and the topological pressure}
 Assume that a $\sigma$-invariant probability measure ${\mu_t}$ is a Gibbs measure for $\Phi_{\A, f, t}$. Then: (i) the constant $P$ in \eqref{orginal def of Gibbs} is equal to $P(\sigma, \Phi_{\mathcal{A}, f, t})$, and (ii) ${\mu_t}$ is an equilibrium state for $\Phi_{\mathcal{A}, f, t}.$
\end{lem}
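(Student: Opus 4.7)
The plan is to extract both assertions directly from the Gibbs property \eqref{orginal def of Gibbs}, leveraging two ingredients: (a) a bounded distortion estimate showing that $\phi_{t,n}^f$ varies by at most $O(1)$ on each $n$-cylinder $[I]$, and (b) the combinatorial expression for the pressure on a subshift of finite type from Remark \ref{(n, 1)separated sets} together with \eqref{the definition of the topological pressure for fb-matrix}. The first step is to establish that there exists $K>0$ with $|\phi_{t,n}^f(x)-\phi_{t,n}^f(y)|\le K$ for all $x,y\in[I]$ and all $I\in\Sigma_n$. The cocycle term is controlled by \eqref{BDD-dis}, while the additive Birkhoff sum $S_n f$ is handled by a standard geometric-series argument combining the H\"older continuity of $f$ with the exponential contraction $d(\sigma^k x,\sigma^k y)\le 2^{-\min(k,n-1-k)}$ along orbits staying in $[I]$. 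This is the only mildly technical point; everything that follows is direct manipulation.

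For part (i), summing \eqref{orginal def of Gibbs} over $I\in\Sigma_n$ and using $\sum_{I\in\Sigma_n}\mu_t([I])=1$ yields, for any choice $x_I\in[I]$,
\[
\tfrac{1}{C_2}\,e^{Pn}\;\le\;\sum_{I\in\Sigma_n}e^{\phi_{t,n}^f(x_I)}\;\le\;\tfrac{1}{C_1}\,e^{Pn}.
\]
Taking $\tfrac{1}{n}\log$ and letting $n\to\infty$ shows that $P=\lim_{n}\tfrac{1}{n}\log\sum_{I\in\Sigma_n}e^{\phi_{t,n}^f(x_I)}$. Replacing $\phi_{t,n}^f(x_I)$ by $\sup_{x\in[I]}\phi_{t,n}^f(x)$ costs at most a factor $e^K$ by the bounded distortion step, hence disappears in the limit, so by the expression for pressure in Remark \ref{(n, 1)separated sets} (applied to the potential $\Phi_{\mathcal{A},f,t}$ exactly as in \eqref{the definition of the topological pressure for fb-matrix}) one identifies $P$ with $P(\sigma,\Phi_{\mathcal{A},f,t})$.

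For part (ii), taking the logarithm of \eqref{orginal def of Gibbs} gives the pointwise bound
$-\log C_2 + Pn - \phi_{t,n}^f(x)\le -\log\mu_t([x]_n)\le -\log C_1 + Pn - \phi_{t,n}^f(x)$; integrating against $\mu_t$ and dividing by $n$ produces
\[
\left|\tfrac{1}{n}H_{\mu_t}(\mathcal{U}_n)-P+\tfrac{1}{n}\int \phi_{t,n}^f\,d\mu_t\right|\;\le\;\tfrac{\max(|\log C_1|,|\log C_2|)}{n},
\]
where $\mathcal{U}_n=\bigvee_{k=0}^{n-1}\sigma^{-k}\mathcal{U}$ and $\mathcal{U}$ is the one-symbol partition from \eqref{definition of U}. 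Letting $n\to\infty$, one has $\tfrac{1}{n}H_{\mu_t}(\mathcal{U}_n)\to h_{\mu_t}(\sigma)$ by Kolmogorov--Sinai, since $\mathcal{U}$ is a two-sided generator for $(\Sigma,\sigma)$, whereas $\tfrac{1}{n}\int\phi_{t,n}^f\,d\mu_t\to\lambda(\mu_t,\Phi_{\mathcal{A},f,t})$ directly from \eqref{defchimu}. Combining these two limits with part (i) yields $h_{\mu_t}(\sigma)+\lambda(\mu_t,\Phi_{\mathcal{A},f,t})=P(\sigma,\Phi_{\mathcal{A},f,t})$, so by the variational principle \eqref{varitional} the measure $\mu_t$ is an equilibrium state for $\Phi_{\mathcal{A},f,t}$, completing the proof.
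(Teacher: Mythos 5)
Your proposal is correct and follows essentially the same route as the paper's proof: both parts rest on bounded distortion of $\phi_{t,n}^f$ over $n$-cylinders plus the characterization of pressure from Remark~\ref{(n, 1)separated sets}, and both obtain (i) by summing the Gibbs inequalities over $\Sigma_n$ and (ii) by combining the Gibbs inequalities with the identification of $\lim_n \tfrac1n H_{\mu_t}(\mathcal U_n)$ with entropy and $\lim_n\tfrac1n\int\phi_{t,n}^f\,d\mu_t$ with $\lambda(\mu_t,\Phi_{\mathcal A,f,t})$. The only cosmetic difference is that you derive the equality $h_{\mu_t}+\lambda = P$ directly from the two-sided Gibbs bound, whereas the paper establishes only the $\geq$ inequality and implicitly invokes the variational principle for the converse.
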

\begin{proof}
Assume that ${\mu_t}$ is a $\sigma$-invariant probability measure satisfying 
\eqref{orginal def of Gibbs}, for some $P\in\mathbb R$.
By \eqref{the definition of the topological pressure for fb-matrix} and the bounded distortion for $f$ and \eqref{BDD-dis},
$$
    \begin{aligned}
\frac{1}{n} \log \sum_{I \in \Sigma_n}\sup_{x \in [I]} e^{S_{n}f(x)}\|\A^n(x)\|^t 
& \geq  \frac{1}{n} \log \sum_{I \in \Sigma_n} \frac1{C_2 C^t} {\hat \mu_t}([I]) e^{P n} \\
& =P - \frac{1}{n} \log [C_2 C^t]
\end{aligned}
$$
and, similarly, 
$$\frac{1}{n} \log \sum_{I \in \Sigma_n}  \sup_{x \in [I]} e^{S_{n}f(x)}\|\A^n(x)\|^t  \leq P+\frac{1}{n} \log [C_1C^t].$$
 Therefore, $P(\sigma,\Phi_{\mathcal{A}, f, t})=P$.
Finally, by the Gibbs property,
$$
\begin{aligned}
h_{{\mu_t}}(\sigma)+t \lambda({ \mu_t}, \A)+\int f d\mu_t& \geq \limsup _{n \rightarrow \infty} \frac{1}{n} \sum_{I \in \Sigma_n} {\mu_t}([I])(-\log {\mu_t}([I])+\sup_{x \in [I]} (e^{S_{n}f(x)}\|\A^n(x)\|^t)) \\
& \geq \limsup_{n \rightarrow \infty} \frac{1}{n} \sum_{I \in \Sigma_n} { \mu_t}([I])\left(n P(\sigma,\Phi_{\mathcal{A}, f, t})-\log C_2\right) \\
& =P(\sigma,\Phi_{\mathcal{A}, f, t}),
\end{aligned}
$$
thus proving that ${\mu_t}$ is an equilibrium state for $\Phi_{\mathcal{A}, f, t}$. 
\end{proof}

\section{Transfer operator}\label{sec: Transfer}
In this section we will discuss a special class of transfer operators in the context of $\theta$-H\"older continuous linear cocycles and study the properties of special eigenmeasures.

\color{black}

\subsection{Equilibrium states for the subshift}
\label{subsec-psi}
In what follows let us recall some notions from the classical thermodynamic formalism, referring the reader to \cite{Bow} for details.

\medskip
Throughout let $\psi: \Sigma \rightarrow \mathbb{R}$ be a fixed $\alpha$-H\"older continuous function. It is well known that $\psi$ is cohomologous to a H\"older continuous potential $\hat\psi$ that depends only on positive coordinates, hence $\hat{\psi}$ can be identified with a H\"older continuous potential on $\Sigma^{+}$. Moreover, 
if $h_{\hat \psi}$ is the non-negative $\alpha$-H\"older continuous leading eigenfunction for the transfer operator 
\begin{equation}
    \label{eq:standardRPF}
\L_{\hat{\psi}} f(x):=\sum_{y \in \sigma^{-1} x} e^{\hat{\psi}(y)} f(y),
\end{equation}
acting on a suitable space of H\"older continuous functions on $\Sigma^+$, then 
\begin{equation}
\label{eq:g-functiondef}
g(x):=\frac{e^{\hat{\psi}(x)}}{e^{P(\sigma,\hat{\psi})}} \cdot \frac{h_{\hat{\psi}(x)}}{h_{\hat{\psi}}(\sigma x)},    
\end{equation}
then $g$ is a H\"older continuous $g$-function cohomologous to $\hat{\psi}$, $g(x)>0$ for every $x\in \Sigma$,  and 
$ 
\sum_{y \in \sigma^{-1} x} g(y)=1
\quad\text{for all $x \in \Sigma^+$}
$ 
($g$ is called a $g$-function and often one writes  $g \in \mathcal{G}\left(\Sigma\right)$).
Let $\hat{\mu}$ be the unique equilibrium state for $\hat{\psi}$ on $(\Sigma^+,\sigma)$. 
There exists a unique probability measure $\mu$ on $\Sigma$ so that $\Pi_*\mu=\hat \mu$; moreover, it is the unique equilibrium state for $\psi$. 
The probability measure $\hat{\mu}$ is known to be a Gibbs measure: 
There exists $C>0$ such that
$$ 
C^{-1} g^{(n)}(x) \leq \hat{\mu}([I]) \leq C g^{(n)}(x)
$$
for every $I\in \Sigma_n$ and $n\ge 1$, where $$g^{(n)}(x):=g\left(\sigma^{n-1} x\right) \ldots g(x).$$
We refer the reader to \cite{Bow} for more details. 

\medskip
The existence of hyperbolic invariant manifolds allows us to further describe the probability measure $\mu$. Indeed, by considering $\Sigma^{+}$ as the parameter space for the local stable sets of $\Sigma$, by Rohlin's disintegration theorem there exists a disintegration $\left\{{\mu}_x^s\right\}_{\hat{x} \in \Sigma^{+}}$ of $\mu$ where each $\mu_{\hat{x}}^s$ is a probability measure supported on $W_{\text {loc }}^s(\hat{x})$ for $\hat{\mu}$-a.e. $\hat{x}$, and 
$$
\mu=\int_{\Sigma^{+}} \mu_{\hat{x}}^s \; d \mu(\hat{x}).
$$

Moreover, such equilibrium states $\mu$
have \emph{local product structure}:
There exists $K_0>0$ so that 
\begin{equation}
    \label{eq:LPSm}
K_0^{-1} \leq \frac{\mu([x_{-n} \dots x_{m-1}])}{\mu([x_{-n} \dots x_{-1}]) \cdot \mu([x_0 \dots x_{m-1}])} \leq K_0 
\end{equation}
for every $m,n\ge 1$ and $x=(x_n)_{n\in \mathbb Z} \in \Sigma \cap \text{supp}(\mu)$. 
\color{black}
Furthermore, as equilibrium states of H\"older potentials have continuous local product structure, the 
unstable holonomy between local stable sets defined by
$$
\begin{aligned}
h_{\hat{x}, \hat{y}}:\left(W_{\mathrm{loc}}^s(\hat{x}), \hat{\mu}_{\hat{x}}^s\right) & \rightarrow\left(W_{\mathrm{loc}}^s(\hat{y}), \hat{\mu}_{\hat{y}}^s\right) \\
x & \mapsto[x, \hat{y}]
\end{aligned}
$$
is absolutely continuous for every $x, y \in \Sigma$ with $x_0=y_0$. Furthermore, its Jacobian $J_{x, y}$ depends H\"older continuously in $\hat{x}, \hat{y} \in \Sigma^{+}$, and 
$ 
\hat{\mu}_{\hat{y}}^s=J_{\hat{x}, \hat{y}}\left(h_{\hat{x}, \hat{y}}\right)_* \hat{\mu}_{\hat{x}}^s .
$ 
(cf. \cite[Lemma~2.6]{bonatti2004lyapunov} and references therein).

\subsection{Holonomy invariant measures}\label{subse: holonomy}
Let $\mu$ be the equilibrium state with respect to the potential $\psi$, as described in Subsection~\ref{subsec-psi}. We will recall some known results, due to Bonatti and Viana \cite{bonatti2004lyapunov}, on holonomy invariant measures.  

\medskip Let $\mathcal{A}: \Sigma \rightarrow \glr$ be a fiber-bunched cocycle, and $m$ be a $F$-invariant measure on $\Sigma \times \P$ that projects to $\mu$ under the canonical projection $\pi: \Sigma \times \P \to \Sigma$. 
Also, we denote $\pi^+: \Sigma ^+\times \P \to \Sigma^+$ the canonical projection.

\begin{defn}
    A probability measure $m$ on $\Sigma \times \P$ is \emph{$H^u$-invariant} if $\pi_* m=\mu$ 
    and there exists a disintegration $\left\{m_{x}\right\}_{x\in \Sigma}$ along the fibers such that
$$
\left(H_{x, y}^u\right)_* m_{x}=m_{y}
$$
for every $x$ and $y$ in the same local unstable set. We say $m$ is 
\emph{$\left(\mathcal{A}, H^u\right)$-invariant} (also known as $u$-state) if it is $H^u$-invariant and, in addition, $\hat{m}$ is $\hat{F}_{\mathcal{A}}$-invariant. 
We say that a probability measure $\hat{m}$ on $\Sigma^{+} \times \P$ is \emph{$\left(\mathcal{A}, H^u\right)$-invariant} if there exists an $\left(\mathcal{A}, H^u\right)$-invariant probability measure $m$ on $\Sigma \times \P$ with $\pi^+_* m=\hat{m}$.
\end{defn}

\begin{rem}
The notion of $\left(\mathcal{A}, H^s\right)$-invariance can be defined analogously. Moreover, while the existence of $\left(\mathcal{A}, H^u\right)$-invariant measures is not obvious, it was shown in \cite{bonatti2004lyapunov} that the set of $\left(\mathcal{A}, H^u\right)$-invariant measures is non-empty.    
\end{rem}

The main result in \cite{bonatti2004lyapunov} ensures that if $\mathcal{A}$ is 1-typical, then the top and bottom Lyapunov exponents of $\A$ with respect to $\mu$ are simple. Let $\overline{\xi(x)} \in \P$  be the
projectivization of the top Oseledets subspace at $x$ with respect to $\A$ and $\mu$; when there
is no confusion, we will denote a unit vector in $\mathbb R^d$ in its direction also by $\xi(x)$.
It is known that there exists a unique ($\mathcal{A}, H^u$)-invariant measure over the probability measure $\mu$ with local product structure. More precisely: 

\begin{thm}\label{BV's results}
 Suppose $\mathcal{A}$ is a 1-typical fiber-bunched cocycle and let $m$ be an $F$-invariant probability measure on $\Sigma \times \P$ which is $H^u$-invariant  and projects to $\mu$, let $\hat m =\pi^+_*m$ and $\hat\mu=\Pi_*\mu$. The following properties hold:
 \begin{itemize}
     \item[1.] The probability measure $\hat{m}$ on $\Sigma^+\times\P$ admits a continuous disintegration $\left\{\hat{m}_x\right\}_{x \in \Sigma^{+}}$, i.e. so that $\Sigma^+ \ni x \mapsto \hat{m}_x$ is continuous (in the weak$^*$ topology);
\item[2.] For $\hat{\mu}$-a.e. $x \in \Sigma^{+}$, we have

$$
\sum_{\sigma y=x} \frac{1}{J_{\hat{\mu}} \sigma(y)} \mathcal{A}(y)_* \hat{m}_y=\hat{m}_x
$$
where $J_{\hat{\mu}} \sigma: \Sigma^{+} \rightarrow(0, \infty)$ is the Jacobian for $\hat{\mu}$;
\item[3.] For any $x \in \Sigma^{+}$ and proper projective subspace $V \subset \P$, we have $\hat{m}_x(V)=0$;
\item[4.] For ${\mu}$-a.e. $x \in \Sigma$
$$
m_x=\lim_{n\to\infty} \A^n(x_n)_* \hat m_{x_n}
\quad\text{where}\;\; 
x_n= \Pi(\sigma^{-n}(x)) \in \Sigma^+;
$$
\item[5.] for every $x \in \Sigma^{+}$, we have
\begin{equation}\label{conditional measures}
\hat{m}_{x}=\int m_{y} \, d \mu_{x}^s(y)=\int \delta_{\xi(y)} \; d \mu_x^s(y),
\end{equation}
where $(\mu_x^s)_{x\in \Sigma^+}$ is a disintegration of $\mu$ on the partition $\{W^s_{\text{loc}}(x)\colon x\in \Sigma^+\}$.
 \end{itemize}
\end{thm}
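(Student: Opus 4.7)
The plan follows the classical strategy of Bonatti and Viana~\cite{bonatti2004lyapunov}, exploiting $H^u$-invariance to descend to $\Sigma^{+}\times\P$ and then using the $1$-typicality of $\A$ to rule out pathological localizations of the fiber measures. The starting observation is that $H^u$-invariance forces the conditional measures $m_{x}$ to depend only on the positive coordinates $\Pi(x)$, so $\hat{m}$ inherits a natural disintegration whose fibers $\hat{m}_{\hat{x}}$ are obtained by pushing any $m_{\tilde{x}}$ through an unstable holonomy into a common vertical fiber. For claim (1), the continuity of $\hat{x}\mapsto\hat{m}_{\hat{x}}$ then follows from the joint H\"older continuity of $H^u$ combined with the continuity of the local product structure of the equilibrium state $\mu$ recalled in Subsection~\ref{subsec-psi}. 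For claim (2), the transfer-type identity is the fiberwise expression of the invariance $\hat{m}=(\hat{F}_{\A})_*\hat{m}$ combined with the Rokhlin disintegration of $\hat{\mu}$ relative to $\sigma_*\hat{\mu}$: testing against $\varphi\in C(\P)$ and using that $1/J_{\hat{\mu}}\sigma(y)$ is the Radon--Nikodym weight along $\sigma$-preimages yields the stated formula for $\hat{\mu}$-a.e.\ $x$, and continuity from (1) extends it to every $x\in\Sigma^{+}$.

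The main obstacle is claim (3), the absence of atoms on proper projective subspaces, and this is where the $1$-typicality hypothesis enters decisively. I would consider the function $x\mapsto\sup_{V}\hat{m}_{x}(V)$ with $V$ ranging over proper projective subspaces of $\P$, and use the transfer identity (2) to show that this quantity behaves sub-invariantly under forward dynamics. The key step is then to exploit the periodic point $p$ and homoclinic point $z$ from Definition~\ref{typical1}: the pinching condition on $\A^{\operatorname{per}(p)}(p)$ makes projective iterates contract onto the fixed splitting determined by the eigenvectors $v_{1},\dots,v_{d}$, while the twisting condition on $\widetilde{H}_{p}^{z}=H^{s}_{p\leftarrow z}\circ H^{u}_{z\leftarrow p}$ ensures that no non-trivial family of proper subspaces is jointly preserved by this contraction together with the homoclinic loop. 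Combining these with (1) forces $\hat{m}_{p}(V)=0$ for every proper $V$, and this conclusion then propagates to every $x\in\Sigma^{+}$ by continuity and forward invariance; the detailed implementation is the content of Sections~6--7 of~\cite{bonatti2004lyapunov}.

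Given (1)--(3), claim (4) follows by iterating the transfer identity backward along $\sigma^{-n}(x)$: the sequence $\A^{n}(x_n)_*\hat{m}_{x_{n}}$ is a positive measure on $\P$ whose convergence follows from the simplicity of the top Lyapunov exponent (ensured by $1$-typicality combined with (3)) together with a standard Lyapunov contraction argument on $\P$, since $\A^{n}(x_{n})$ restricted to the complement of a shrinking neighborhood of the slowest direction becomes strongly contracting toward the top Oseledets direction $\xi(x)$; the identification of the limit with $m_{x}$ then comes from $\hat{F}_{\A}$-invariance of $m$ and the uniqueness of the limit. Finally, claim (5) is an instance of the Rokhlin disintegration of $\mu$ along local stable sets with $\Sigma^{+}$ as parameter space: the first equality is the tautological expression of $\hat{m}_{x}$ as the average of fiber measures $m_{y}$ against $\mu_{x}^{s}$, and the identification $m_{y}=\delta_{\xi(y)}$ is the Ledrappier-type invariance principle for $u$-states of $1$-typical cocycles, which combines $H^{u}$-invariance, the simplicity of the top exponent, and (3) to force the conditional measures on the stable side to collapse to the Dirac mass at the top Oseledets direction.
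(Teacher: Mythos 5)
The paper does not actually prove this theorem; it is a compendium of results from Bonatti--Viana \cite{bonatti2004lyapunov}, and the ``proof'' consists of pointers: item (1) to their Proposition 4.3, item (2) to their Remark 4.8, item (3) to their Proposition 5.1, item (4) to their Proposition 3.1 and Remark 3.5, and item (5) to their Lemma 4.7 and Theorem 3. Your sketch reconstructs the spirit of most of those arguments accurately for items (1), (2), (3), and (5), including the right role of $H^u$-invariance, the transfer identity as a fiberwise rewriting of $\hat{F}_{\A}$-invariance against the Jacobian, and the use of pinching/twisting at the homoclinic pair to kill atoms on proper subspaces.

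Your argument for item (4), however, departs from Bonatti--Viana's and has a genuine logical gap. You derive the convergence $m_x = \lim_n \A^n(x_n)_*\hat m_{x_n}$ from a projective Lyapunov-contraction argument, invoking simplicity of the top exponent ``ensured by $1$-typicality combined with (3).'' But in \cite{bonatti2004lyapunov} item (4) (their Proposition 3.1) is a \emph{general} martingale-convergence fact about $H^u$-invariant measures with continuous disintegrations: pushing $\hat m_{x_n}$ forward by $\A^n(x_n)$ is precisely conditioning $m$ on the increasing filtration generated by the cylinders $[x_{-n}\dots x_0]$, and the limit $m_x$ follows from the martingale convergence theorem with no typicality or simplicity hypothesis at all. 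Simplicity of the top exponent is the \emph{conclusion} of the Bonatti--Viana machinery, and its proof passes through items (4) and (5); so using simplicity to prove (4) reverses the logical order and is circular unless one accepts simplicity as an independent black box. Moreover, your contraction argument would directly show that the limit is a Dirac mass $\delta_{\xi(x)}$, i.e.\ it proves (5), not the convergence statement (4) for an arbitrary $H^u$-invariant $m$. The correct, typicality-free argument for (4) is the martingale one; your version conflates (4) with (5) and inherits the prerequisites of the much harder simplicity theorem.
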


\begin{proof}
Item (1) corresponds to \cite[Proposition 4.3]{bonatti2004lyapunov}, item (2) follows from 
\cite[Remark 4.8]{bonatti2004lyapunov}, item (3)
was proved in \cite[Proposition~5.1]{bonatti2004lyapunov}
item (4) is corresponds to
\cite[Proposition~3.1 and Remark 3.5]{bonatti2004lyapunov}, and item (5) follows as a consequence of \cite[Lemma 4.7 and Theorem~3]{bonatti2004lyapunov}.
\end{proof}

\subsection{Transfer operators adapted to linear cocycles}
\label{subsec:53}

To ease notation, whenever no confusion arises, we henceforth denote elements of the one-sided symbolic space by $x$ rather than $\hat{x}$.
We endow $\Sigma \times \P$ and $\Sigma^+ \times \P$ with the metric
\begin{equation}\label{definition of the metric}
d_{\Sigma^+ \times \P}((x, \bar{u}),(y, \bar{v}))=\max \left\{d(x, y), d_{\P}(\bar{u}, \bar{v})\right\}
\end{equation}
where $d$ is the metric in $\Sigma$ (resp. $\Sigma^+$) and $d_{\P}$ stands for the metric in the projective space, introduced in Subsection~\ref{subsec:projective}.
For $0<\alpha \leq 1$, let $C^\alpha\left(\Sigma^+ \times \P\right)$ denote the vector space of real-valued $\alpha$-H\"older continuous maps $f: \Sigma^+ \times \P \rightarrow \mathbb{R}$, endowed with the norm 
$$
\|f\|_\alpha:=|f|_\alpha+\|f\|_{\infty}
$$
for each $f \in C^\alpha\left(\Sigma^+ \times \P\right)$, where
$$
|f|_\alpha:=\sup _{(x, \bar{u}) \neq(y, \bar{v})} \frac{|f(x, \bar{u})-f(y, \bar{v})|}{d_{\Sigma^+ \times \P}((x, \bar{u}),(y, \bar{v}))^\alpha}.
$$

\medskip 
\color{black}

 \medskip
Given a $\theta$-H\"older continuous 1-typical fiber-bunched cocycle $\mathcal{A}: \Sigma \rightarrow \mathrm{GL}_d(\mathbb{R})$ we will be interested in the following three families of continuous potentials:
\begin{itemize}
    \item $t\Phi_{\A}$, where $\Phi_{\A}$ is defined as in \eqref{eq:geometricpotential},
    \item $\Phi_{\A, t}=\left(\phi_{t, n}\right)_{n\ge 1}$, where $$
\phi_{t, n}:=S_n \psi(x)+t \log \left\|\mathcal{A}^n(x)\right\|
.$$
where $\psi: \Sigma \to \mathbb R$ is the H\"older continuous potential as in Subsection~\ref{subsec-psi},
    \item $\Phi_{\A, t}^g:=\left(\phi_{t, n}^g\right)_{n\ge 1}$ defined by
$$
\phi_{t, n}^g(x):=\log g^{(n)}(x)+t \log \left\|\mathcal{A}^n(x)\right\|,
$$
where $g\in \mathcal{G}\left(\Sigma\right)$ is defined by ~\eqref{eq:g-functiondef}.
\end{itemize}

\smallskip
It follows from the definition of $g$ in ~\eqref{eq:g-functiondef} and definition of Gibbs measures in ~\eqref{orginal def of Gibbs} that Gibbs measures for $\Phi_{\A, t}^g$ and $\Phi_{\A, t}$ coincide. Moreover, 
\begin{equation}\label{relation between pressures of potentials}
    P\left(\sigma,\Phi_{\A, t}^g\right)=P\left(\sigma,\Phi_{\A, t}\right)-P(\sigma,\psi).
\end{equation}

\color{black}
\begin{defn}
    Consider the transfer operator $\mathcal{L}: C^\alpha(\Sigma^+\times \P) \to C^\alpha(\Sigma^+\times \P)$
    given by 
$$
\mathcal{L} f(x, \bar{u}):=\sum_{y: \sigma y=x} g(y) f\left(y, \overline{\mathcal{A}(y)^* u}\right).
$$
\end{defn}

 \begin{rem}
It is simple to check that the spectral radius $\rho(\mathcal{L})$ is equal to 1 and the constant function 1 is an eigenfunction of $\mathcal{L}$.  \end{rem}

\smallskip

\color{black}
\begin{defn}
   Given $t\in \mathbb R$, consider the trasnfer operator given by
\begin{equation}\label{the definition of the transfer operator}
\mathcal{L}_t f(x, {u}):=\sum_{y \in \sigma^{-1}x} g(y)\left\|\mathcal{A}(y)^* \frac{u}{\|u\|}\right\|^t f\left(y, \overline{\mathcal{A}(y)^* u}\right),
\end{equation}
acting on the space of H\"older continuous functions $f: \Sigma^+ \times \P \rightarrow \mathbb{R}$.  
\end{defn}

It is clear from the definition that
$$
\mathcal{L}_t^n f(x,{u})=\sum_{y \in \sigma^{-n} x} g^{(n)}(y)\left\|\mathcal{A}^{[n]}(y) \frac{u}{\|u\|}\right\|^t f\left(y, \overline{\mathcal{A}^{[n]}(y) u}\right)
$$
for every H\"older continuous function $f$ and every $n\ge 1$.

\medskip
The next result, due to Park and Piraino \cite{Park-Piraino}, describes the spectral properties of the transfer operator 
$\mathcal{L}_t$ acting on the space of H\"older continuous functions for all sufficiently small values of $t$, by establishing a spectral gap for the operator $\mathcal L_0$ and by using perturbation theory. More precisely:


\begin{prop}[{\cite[Proposition 9 and Lemma 6]{Park-Piraino}}]\label{main-prop} Suppose that $\mathcal{A}$ is 1-typical fiber-bunched cocycle and $\alpha>0$ is sufficiently small. There exists $t_0>0$ 
such that for any $t \in (-t_0,t_0)$ the operator $\mathcal{L}_t: C^\alpha\left(\Sigma^+ \times \P\right) \rightarrow C^\alpha\left(\Sigma^+ \times \P\right)$ can be written as
$$
\mathcal{L}_t=\rho_t\left(P_t+S_t\right),
$$
where $\rho_t >0$ is the leading eigenvalue and spectral radius of $\mathcal{L}_t$,
$P_t$ is the projection onto the one-dimensional subspace $\operatorname{ker}\left(\rho_t I-\mathcal{L}_t\right)$, and there exist constants $C>0$ and $0<\beta<1$ such that 
$ 
\left\|S_t^n\right\| \leq C \beta^n
$ 
for all $t \in (-t_0,t_0)$ and all $n\ge 1$. Moreover, 
the operators $P_t$ and $S_t$  
satisfy  $P_t S_t=S_t P_t=0$.
Furthermore, the functions $(-t_0,t_0) \ni t \mapsto \rho_t, P_t, S_t$ are real analytic.
\end{prop}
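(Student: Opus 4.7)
The plan is to establish the claim in two stages: first prove quasi-compactness and a spectral gap for $\mathcal{L}_0$ on $C^\alpha(\Sigma^+\times\P)$ with $\alpha>0$ small enough, then invoke analytic perturbation theory (Kato--Rellich) to transfer the gap to $\mathcal{L}_t$ for $t$ near $0$ and obtain the analyticity of $\rho_t,P_t,S_t$. The reason the gap survives perturbation is that $t\mapsto \mathcal{L}_t$ is real analytic in operator norm on $C^\alpha(\Sigma^+\times\P)$: the factor $\|\mathcal{A}(y)^*u/\|u\|\|^t$ depends analytically on $t$ with uniformly bounded derivatives (in $(y,u)$) thanks to the uniform lower and upper bounds on $\|\mathcal{A}(y)^*\|$, and the weight $g$ is H\"older. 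Together with the gap for $\mathcal{L}_0$, this yields a simple leading eigenvalue $\rho_t$ with rank-one spectral projection $P_t$, and a complementary part $S_t$ of spectral radius strictly less than $\rho_t$, all depending analytically on $t$.

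For the gap at $t=0$, the key is a Lasota--Yorke inequality of the form
\[
\|\mathcal{L}_0^n f\|_\alpha \le C\lambda^n|f|_\alpha + C_n\|f\|_\infty,
\]
with some $\lambda\in(0,1)$. The H\"older seminorm bound comes from two sources: (i) contraction of the base cylinders by $2^{-n\alpha}$ controlled by $g^{(n)}$, and (ii) the projective distortion $d_{\P}(\overline{\mathcal{A}(y)^*u},\overline{\mathcal{A}(y)^*v})\lesssim \|\mathcal{A}(y)\|\|\mathcal{A}(y)^{-1}\|\,d_{\P}(\bar u,\bar v)$, so by fiber-bunching (which gives $\|\mathcal{A}\|\|\mathcal{A}^{-1}\|<2^{\theta}$) one obtains contraction in $d_{\Sigma^+\times\P}^{\alpha}$ provided $\alpha$ is chosen small enough relative to $\theta$ minus the fiber-bunching deficit; this is exactly the H\"older exponent restriction in the statement. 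Combined with the standard compact embedding $C^\alpha\hookrightarrow C^0$ and Hennion's criterion, this gives quasi-compactness of $\mathcal{L}_0$ with essential spectral radius strictly less than $1$.

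Next one identifies the peripheral spectrum of $\mathcal{L}_0$. Since $\mathcal{L}_0 1=1$ and $\mathcal{L}_0$ is a positive operator preserving the cone of non-negative H\"older functions, all peripheral eigenvalues lie on the unit circle. Eigenfunctions of $\mathcal{L}_0$ for peripheral eigenvalues correspond to $\hat F_{\mathcal{A}}$-invariant measures on $\Sigma^+\times\P$ projecting to $\hat\mu$ and admitting a continuous disintegration; by Theorem~\ref{BV's results} the $(\mathcal{A},H^u)$-invariant measure over $\mu$ is unique when $\mathcal{A}$ is 1-typical, and topological mixing of $\sigma$ together with twisting rules out nontrivial roots of unity in the peripheral spectrum. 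This gives simplicity of the eigenvalue $1$ and a spectral gap for $\mathcal{L}_0$.

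The main obstacle is the peripheral spectrum analysis: the Lasota--Yorke bound is largely bookkeeping once fiber-bunching is in hand, but excluding eigenvalues on the unit circle other than $1$, and proving that $1$ is algebraically simple, requires the full strength of 1-typicality (pinching plus twisting) through the Bonatti--Viana classification of $u$-states. This is where one must carefully use that $\mathcal{A}$ is not merely fiber-bunched but also 1-typical, converting the uniqueness-of-$u$-state statement into a statement about the kernel of $I-\mathcal{L}_0$ in $C^\alpha(\Sigma^+\times\P)$. Once this is secured, the real-analytic perturbation yielding the stated decomposition $\mathcal{L}_t=\rho_t(P_t+S_t)$ with $\|S_t^n\|\le C\beta^n$ uniformly in $t\in(-t_0,t_0)$ is automatic from Kato's theory.
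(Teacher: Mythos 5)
The paper does not actually prove this proposition; it imports it verbatim from Park--Piraino (their Proposition~9 and Lemma~6), so the relevant comparison is to what such a proof must contain. Your overall architecture --- quasi-compactness of $\mathcal{L}_0$ via a Lasota--Yorke bound, identification of the peripheral spectrum through the Bonatti--Viana classification of $u$-states, and analytic (Kato) perturbation for small $t$ --- is the right skeleton and matches the route the cited paper takes. But there is a concrete gap in where you locate the use of $1$-typicality. You claim the Lasota--Yorke estimate is ``largely bookkeeping once fiber-bunching is in hand,'' with $1$-typicality only entering at the peripheral-spectrum step. That is not sustainable. Fiber-bunching gives $\|\mathcal{A}(y)\|\|\mathcal{A}(y)^{-1}\|<2^{\theta}$, which bounds the Lipschitz constant of the projective map $\bar u\mapsto\overline{\mathcal{A}(y)^{*}u}$, but that bound is strictly larger than $1$: near the direction of smallest singular value of $\mathcal{A}(y)^{*}$, the projective action genuinely \emph{expands} by a factor comparable to $\|\mathcal{A}(y)\|\|\mathcal{A}(y)^{-1}\|$. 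Raising to the power $\alpha$ only pushes the factor $2^{\theta\alpha}$ toward $1$ as $\alpha\to 0$, never below it; and the base contraction $2^{-n\alpha}$ does nothing for the fiber variation at a \emph{fixed} $x$. So no one-step (or $n$-step) contraction in $d_{\Sigma^{+}\times\P}^{\alpha}$ follows from fiber-bunching alone. What saves the estimate is that for a fixed $\bar u$, for $\hat\mu$-most preimage branches $I$ of length $n$, the matrix $\mathcal{A}^{[n]}(Ix)$ has exponentially small spectral gap ratio $\gamma_{1,2}$ (simplicity of the top exponent, from pinching) and $\bar u$ is bounded away from the singular direction $\overline{v_{+}(\mathcal{A}^{[n]}(Ix))}$ (twisting, quantified by large-deviation bounds of the type in Lemma~\ref{LD application2}); the residual ``bad'' branches carry exponentially small $g^{(n)}$-weight. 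This Furstenberg--Guivarc'h--Le Page mechanism is precisely where $1$-typicality enters first, not last, and it is also what dictates how small $\alpha$ must be.

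A secondary imprecision worth flagging: the duality identity used to pass between eigenfunctions of $\mathcal{L}_{0}$ and invariant measures reads $(\mathcal{L}_{0}f_{1})\cdot f_{2}=\mathcal{L}_{0}(f_{1}\cdot f_{2}\circ\hat F_{\mathcal{A}_{*}^{-1}})$, so the relevant skew product is $\hat F_{\mathcal{A}_{*}^{-1}}$, not $\hat F_{\mathcal{A}}$, and the peripheral spectrum is governed by the $u$-state theory for the adjoint-inverse cocycle $\mathcal{A}_{*}^{-1}$ (which is again $1$-typical, cf.\ the remark following Definition~\ref{typical1}). Once the spectral gap of $\mathcal{L}_{0}$ is secured with 1-typicality used in the quasi-compactness step as above, the perturbative part of your argument --- real analyticity of $t\mapsto\mathcal{L}_{t}$ in $\|\cdot\|_{\alpha}$ operator norm, and then the decomposition $\mathcal{L}_{t}=\rho_{t}(P_{t}+S_{t})$ with $P_{t}S_{t}=S_{t}P_{t}=0$, analyticity of $\rho_{t},P_{t},S_{t}$, and the uniform bound $\|S_{t}^{n}\|\leq C\beta^{n}$ on a small interval by shrinking $t_{0}$ --- is standard and correct.
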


\begin{rem}
 One can compare the class of transfer operators ~\eqref{the definition of the transfer operator} with the ones studied by Le Page \cite{lepage1982theoremes} in the context of strongly irreducible, proximal, i.i.d. random walks satisfying a
finite exponential moment condition (see also \cite[Theorem 4.3]{baladi2000positive}). As the random product of matrices is coded by a one-step cocycle, the family of transfer operators in \cite{lepage1982theoremes} acts on the space of H\"older continuous functions $C^\alpha(\P)$, whereas our class of transfer operator acts on $C^\alpha(\Sigma^+ \times \P)$. 
\end{rem}


We will need an additional description of the spectral properties of the dual transfer operators $\L_t^*$.  By Proposition \ref{main-prop} and as a straightforward application of the geometric Hahn–Banach theorem (see, e.g., \cite[Lemma 4.3]{VV}), there exists a measure $\hat \nu_t$ on $\Sigma^+ \times \P$ such that 
$$
\mathcal{L}_t^{\ast} \hat \nu_t = \rho_t \hat \nu_t
$$
for $t \in (-t_0, t_0)$. Additionally, it follows from the proof of Proposition \ref{main-prop} that 
there exists a function ${\hat h_t} \in C^\alpha\left(\Sigma^+ \times \P\right)$ 
so that 
$\mathcal{L}_t {\hat h_t}=\rho_t {\hat h_t}$ and $P_t {\hat h_t}={\hat h_t}$. This also implies that $P_t^2=P_t$ and $P_t \mathcal{L}_t=\mathcal{L}_t P_t=\rho_t P_t$. 
By writing $P_t f =\kappa_f {\hat h_t}$ for any $f\in C^{\alpha}(\Sigma^+ \times \P)$ and using the Riesz-Markov representation theorem and that $P_t {\hat h_t}={\hat h_t}$ it is not hard to check that 
\begin{equation}\label{eq2}
  P_t f=\left(\int f \mathrm{~d} \hat \nu_t\right) {\hat h_t}, \quad \forall f \in C^{\alpha}(\Sigma^+ \times \P).
\end{equation}

\begin{cor}\label{corollary} Under the assumptions of Proposition \ref{main-prop}, there exist constants $C_t>0$ and  $0<\beta<1$ (as in Proposition \ref{main-prop}) such that,
for all $n \geq 0$ and $f \in C^{\alpha}\left(\Sigma \times \P\right)$,
$$
\left\|\rho_t^{-n}\L_t^n f-\left\langle f, \hat \nu_t\right\rangle {\hat h_t}\right\|_{\alpha} \leq C\|f\|_{\alpha} \beta^n.
$$
\end{cor}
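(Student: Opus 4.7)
The plan is to read off this corollary directly from the spectral decomposition provided by Proposition~\ref{main-prop} together with the identification of the projection $P_t$ given in \eqref{eq2}. In particular, there is no new analytic estimate to produce: the content is algebraic manipulation plus one application of the operator-norm bound $\|S_t^n\|\le C\beta^n$.

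First I would exploit the orthogonality relations $P_t^2=P_t$ and $P_tS_t=S_tP_t=0$ (recorded in Proposition~\ref{main-prop}) to iterate the decomposition $\mathcal{L}_t=\rho_t(P_t+S_t)$. An easy induction on $n$ then yields
\[
\mathcal{L}_t^n \;=\; \rho_t^n\bigl(P_t+S_t^n\bigr),
\]
since the cross terms in the binomial expansion vanish. Dividing by $\rho_t^n$ gives
\[
\rho_t^{-n}\mathcal{L}_t^n f \;=\; P_t f + S_t^n f
\]
for every $f\in C^\alpha(\Sigma^+\times\P)$ and every $n\ge 0$.

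Next I would identify $P_t f$ with the rank-one expression $\langle f,\hat\nu_t\rangle\hat h_t$. This identification is precisely the content of \eqref{eq2}, which was derived in the paragraph preceding the corollary from the Riesz--Markov representation theorem together with $P_t\hat h_t=\hat h_t$. Substituting yields
\[
\rho_t^{-n}\mathcal{L}_t^n f - \langle f,\hat\nu_t\rangle \hat h_t \;=\; S_t^n f.
\]

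Finally, taking the $\alpha$-H\"older norm on both sides and applying the operator-norm bound $\|S_t^n\|_{\alpha\to\alpha}\le C\beta^n$ from Proposition~\ref{main-prop} gives
\[
\bigl\|\rho_t^{-n}\mathcal{L}_t^n f-\langle f,\hat\nu_t\rangle \hat h_t\bigr\|_\alpha
\;=\; \|S_t^n f\|_\alpha \;\le\; \|S_t^n\|\,\|f\|_\alpha \;\le\; C\|f\|_\alpha \beta^n,
\]
which is the claimed inequality. There is no real obstacle here; the only slightly delicate point is making sure that the decomposition is applied on the correct function space (the space $C^\alpha(\Sigma^+\times\P)$ on which Proposition~\ref{main-prop} provides the spectral gap), and that the constants $C$ and $\beta$ are taken uniformly from that proposition so that the statement is genuinely quantitative.
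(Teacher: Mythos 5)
Your proof is correct and follows the same route as the paper: iterate the spectral decomposition $\mathcal{L}_t=\rho_t(P_t+S_t)$ using $P_t^2=P_t$, $P_tS_t=S_tP_t=0$ to get $\rho_t^{-n}\mathcal{L}_t^n=P_t+S_t^n$, identify $P_tf=\langle f,\hat\nu_t\rangle\hat h_t$ via \eqref{eq2}, and apply $\|S_t^n\|\le C\beta^n$. The only difference is cosmetic: you spell out the induction establishing $\mathcal{L}_t^n=\rho_t^n(P_t+S_t^n)$, which the paper takes as read.
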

\begin{proof}
  By Proposition \ref{main-prop},  $\rho_t^{-n}\L_t^n=P_t+S_{t}^n.$ Now, the characterization \eqref{eq2} guarantees that
$\left\|\rho_t^{-n} \L_t^n f-\left\langle f, \hat \nu_t\right\rangle {\hat h_t}\right\|_{\alpha} \leq \|S_t^n f\|_{\alpha} \leq \|S_t^n \| \|f\|_{\alpha}
\le C\|f\|_{\alpha} \beta^n,$ 
thus proving the corollary.
\end{proof}

\begin{lem}\label{h is bdd} 
There exists $0<t_1< t_0$ so that
$$
\inf_{t \in (-t_1,t_1)} \inf _{(x, \bar{u}) \in \Sigma^+ \times \P} {\hat h_t}(x, \bar{u})>0
$$
and
$$
\sup_{t \in (-t_1,t_1)} \sup _{(x, \bar{u}) \in \Sigma^+ \times \P} {\hat h_t}(x, \bar{u})<\infty .
$$
\end{lem}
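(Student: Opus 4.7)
The strategy is to leverage the real analyticity established in Proposition~\ref{main-prop} together with the explicit identification of $\hat h_0$ as a constant.

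First, I would observe that $\mathcal{L}_0$ coincides exactly with the operator $\mathcal{L}$ introduced before~\eqref{the definition of the transfer operator}, since $\|\mathcal{A}(y)^* u/\|u\|\|^{0}=1$ for every $y$ and $u$. By the remark immediately following the definition of $\mathcal{L}$, the spectral radius of $\mathcal{L}$ equals $1$ and the constant function $\mathbf{1}$ is an eigenfunction, so $\rho_0=1$ and $\mathcal{L}_0 \mathbf{1}=\mathbf{1}$. Since the leading eigenspace $\mathrm{ker}(\rho_0 I-\mathcal{L}_0)$ is one-dimensional by Proposition~\ref{main-prop}, we may and do normalize $\hat h_t$ by choosing
\[
\hat h_t := P_t \mathbf{1},
\]
so that $\hat h_0=P_0 \mathbf{1}=\mathbf{1}$.

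Next, I would invoke the real analyticity of $(-t_0,t_0)\ni t\mapsto P_t$ as bounded operators on $C^{\alpha}(\Sigma^+\times\P)$ given by Proposition~\ref{main-prop}. This immediately yields that
\[
(-t_0,t_0)\ni t\mapsto \hat h_t = P_t\mathbf{1}
\]
is a real-analytic curve in $C^{\alpha}(\Sigma^+\times\P)$, and in particular continuous at $t=0$ in the Hölder norm $\|\cdot\|_{\alpha}$. Since $\|\cdot\|_{\infty}\leq \|\cdot\|_{\alpha}$, we have $\|\hat h_t - \mathbf{1}\|_{\infty}\to 0$ as $t\to 0$.

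Finally, because $\hat h_0\equiv 1$ and $\Sigma^+\times\P$ is compact, there exists $t_1 \in (0,t_0)$ such that
\[
\sup_{t\in(-t_1,t_1)}\|\hat h_t-\mathbf{1}\|_\infty \leq \tfrac{1}{2}.
\]
This yields $\tfrac12 \leq \hat h_t(x,\bar u)\leq \tfrac32$ for every $t\in(-t_1,t_1)$ and every $(x,\bar u)\in\Sigma^+\times\P$, and the two desired uniform bounds follow by taking infimum and supremum.

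The only point that requires care—and is the closest thing to a genuine obstacle—is the consistency of the normalization: one must verify that the choice $\hat h_t=P_t\mathbf{1}$ is admissible (i.e.\ it really picks out a one-parameter family in the leading eigenspace that is nontrivial near $t=0$), which follows from $P_0\mathbf{1}=\mathbf{1}\neq 0$ and the continuity of $t\mapsto P_t$. Everything else reduces to the elementary fact that uniform convergence of positive continuous functions on a compact space preserves positivity and boundedness.
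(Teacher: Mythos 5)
Your proof is correct and takes essentially the same route as the paper: both identify $\hat h_t = P_t\mathbf{1}$, observe $\hat h_0 = \mathbf{1}$, and use the real analyticity of $t\mapsto P_t$ from Proposition~\ref{main-prop} to conclude that $\hat h_t$ stays uniformly close to $\mathbf{1}$ for small $t$. If anything, your direct argument via $\|\hat h_t - \mathbf{1}\|_{\infty}\leq\|\hat h_t - \mathbf{1}\|_{\alpha}\to 0$ is cleaner than the paper's proof-by-contradiction, which passes from pointwise convergence $\hat h_t(x,\bar u)\to 1$ at a fixed limit point to the claim $\hat h_{t_n}(x_n,\overline{u_n})\to 1$ along a varying sequence—a step that in fact implicitly requires exactly the uniform convergence you invoke explicitly.
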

\begin{proof}
Assume for contradiction that the first inequality fails for every small $t_1>0$. Then there exists a sequence $t_n \to 0$ and points $(x_n, \overline{u_n}) \in \Sigma^+ \times \P$ such that $\hat h_{t_n}(x_n, \overline{u_n}) < \frac{1}{n}$. By compactness of $\Sigma^+ \times \P$, the sequence $(x_n, \overline{u_n})$ has a convergent subsequence to a point $(x, \bar{u}) \in \Sigma \times \P$. Without loss of generality, we assume $(x_n, \overline{u_n})_n$ is convergent to $(x, \bar{u})$.
Let $\delta_{(x, \bar{u})}$ denote the Dirac measure at $(x, \bar{u})$. Using ~\eqref{eq2} and the analiticity of the map $t \mapsto {\hat h_t} = P_t 1$, we obtain
$$
{\hat h_t}(x, \bar{u})=\int {\hat h_t} \mathrm{~d} \delta_{(x, \bar{u})} \longrightarrow \int {\hat h_0} \mathrm{~d} \delta_{(x, \bar{u})}={\hat h_0}(x, \bar{u})=1 \quad \text { as } t \rightarrow 0.
$$
The latter implies that $\hat h_{t_n}(x_n, \overline{u_n}) \to 1$, contradicting the assumption that $\hat h_{t_n}(x_n, \overline{u_n}) < \frac{1}{n}$. This proves the first inequality. 
The proof of the second inequality follows by a similar argument. 
\end{proof}

\subsection{Disintegration and support of the eigenmeasures for $\L_t$}

We proceed to show that the probability measures $\hat \nu_t$ satisfying $\mathcal{L}_t^{\ast} \hat \nu_t = \rho_t \hat \nu_t$ are $H^u$-invariant. For that reason, let us introduce the the skew product $\hat{F}_{\mathcal{A}_*^{-1}}: \Sigma^{+} \times \P \rightarrow \Sigma^{+} \times \P$ defined by
$$
\hat{F}_{\mathcal{A}_*^{-1}}(x, \bar{u})=\left(\sigma x, \overline{\mathcal{A}_*^{-1}(x) u}\right),
$$
which is related to the cocycle $\mathcal{A}_*^{-1}$.
As mentioned in Subsection \ref{subsec:fiberbunching}, there is no loss of generality in studying linear cocycles $\mathcal{A}: \Sigma \rightarrow \mathrm{GL}_d(\mathbb{R})$ that are constant along the local stable sets. 
For such a cocycle, the unstable holonomy will be denoted by $H^{u}$ whereas $H^s \equiv I$.

\begin{lem}\label{invariant measure}
Let $\hat{\nu}_t$ be a probability measure on $\Sigma^+ \times \P$ such that $\L_t^{\ast} \hat{\nu}_t =\rho_t \hat{\nu}_t$. Assume that the probability measure $\hat m_t$ on $\Sigma^+ \times \P$ is absolutely continuous with respect to $\hat \nu_t$ with density $d \hat m_t/d\hat \nu_t={\hat h_t}$. Then, $\hat m_t$ 
 is $\hat{F}_{\A_{\ast}^{-1}}$-invariant, and there exists a unique ${F}_{\mathcal{A}_*^{-1}}$-invariant probability measure $m_t$   so that $\pi^+_*m_t=\hat m_t$.
 Additionally, if the $\sigma$-invariant probability measure  $\pi_*m_t$ has local product structure then $m_{t,x}=\delta_{\xi_{\ast}(x)}$ for $\widehat{\pi_*m_t}$-a.e. $x$. In particular, $m_t$ is unique 
and given by the expression
$$
m_t = \int_{\Sigma \times \P} {\hat h_t}\, d\hat\nu_t = \int_\Sigma \delta_{\xi_{\ast}(x)} \; d\widehat{\pi_*m_t}(x)
$$
\end{lem}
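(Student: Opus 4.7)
My plan is to address the three distinct assertions of the lemma in sequence, with the Dirac disintegration being the main technical step. The $\hat F_{\mathcal{A}_*^{-1}}$-invariance of $\hat m_t$ follows from a direct transfer-operator computation: since the preimages of $(x,\bar u)$ under $\hat F_{\mathcal{A}_*^{-1}}$ are exactly the points $(y,\overline{\mathcal{A}(y)^{\ast} u})$ with $\sigma y=x$, combining with the eigenfunction equation $\mathcal{L}_t\hat h_t=\rho_t\hat h_t$ gives
\begin{equation*}
\mathcal{L}_t\bigl(\hat h_t\cdot(\varphi\circ \hat F_{\mathcal{A}_*^{-1}})\bigr)(x,\bar u)=\varphi(x,\bar u)\cdot \mathcal{L}_t\hat h_t(x,\bar u)=\rho_t\,\hat h_t(x,\bar u)\,\varphi(x,\bar u)
\end{equation*}
for every $\varphi\in C(\Sigma^+\times\P)$. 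Integrating against $\hat\nu_t$ and invoking $\mathcal{L}_t^{\ast}\hat\nu_t=\rho_t\hat\nu_t$ yields $\int (\varphi\circ \hat F_{\mathcal{A}_*^{-1}})\,d\hat m_t=\int \varphi\,d\hat m_t$; the normalisation of $\hat h_t\,\hat\nu_t$ to a probability is legitimate by the two-sided bounds on $\hat h_t$ from Lemma~\ref{h is bdd}.

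For the existence and uniqueness of the lift $m_t$ to $\Sigma\times\P$, I would use the natural extension. Under the standing reduction of Subsection~\ref{subsec:fiberbunching}, the cocycle $\mathcal{A}_*^{-1}$ is constant along local stable sets, so $F_{\mathcal{A}_*^{-1}}\colon\Sigma\times\P\to\Sigma\times\P$ is an invertible skew-product that intertwines with $\hat F_{\mathcal{A}_*^{-1}}$ through $\pi^+$ and realises its natural extension. Standard inverse-limit theory thus produces a unique $F_{\mathcal{A}_*^{-1}}$-invariant probability measure $m_t$ satisfying $\pi^+_{\ast}m_t=\hat m_t$.

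Assume finally that $\mu_t:=\pi_{\ast}m_t$ has local product structure. Oseledets' theorem applied to the 1-typical cocycle $\mathcal{A}_*^{-1}$ provides, on a full-$\mu_t$-measure set, an equivariant Borel section $x\mapsto\overline{\xi_*(x)}$ satisfying $\mathcal{A}_*^{-1}(x)\overline{\xi_*(x)}=\overline{\xi_*(\sigma x)}$, so that $\tilde m_t:=\int_{\Sigma}\delta_{\xi_*(x)}\,d\mu_t(x)$ is an $F_{\mathcal{A}_*^{-1}}$-invariant probability measure projecting to $\mu_t$. To identify $m_t$ with $\tilde m_t$, I iterate the invariance identity $m_{t,y}=\mathcal{A}(y)^{\ast}_{\ast}m_{t,\sigma y}$ (obtained by inverting $m_{t,\sigma y}=\mathcal{A}_*^{-1}(y)_{\ast}m_{t,y}$) to obtain $m_{t,x}=[\mathcal{A}^n(x)]^{\ast}_{\ast}m_{t,\sigma^n x}$ for every $n\ge 1$. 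The principal technical obstacle lies here: by the very definition of $\xi_*$, the top left singular direction of $[\mathcal{A}^n(x)]^{\ast}$ converges to $\overline{\xi_*(x)}$, and the spectral gap provided by 1-typicality suggests that the projective pushforwards $[\mathcal{A}^n(x)]^{\ast}_{\ast}m_{t,\sigma^n x}$ concentrate on $\overline{\xi_*(x)}$; however, for this concentration to truly yield $m_{t,x}=\delta_{\xi_*(x)}$, one must rule out positive mass of $m_{t,\sigma^n x}$ on the flag complementary to the top right singular direction of $\mathcal{A}^n(x)$. This non-degeneracy is the analogue of Theorem~\ref{BV's results}(3) combined with the continuous disintegration of Theorem~\ref{BV's results}(1), and follows from the pinching and twisting conditions built into 1-typicality together with the uniform positivity and boundedness of $\hat h_t$ from Lemma~\ref{h is bdd}, in complete analogy with the scheme of \cite{bonatti2004lyapunov}.
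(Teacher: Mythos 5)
Your treatment of the $\hat F_{\mathcal{A}_*^{-1}}$-invariance and of the existence and uniqueness of the lift $m_t$ is correct and matches the paper: the transfer-operator intertwining identity and the natural-extension argument are the same. The gap is in the Dirac disintegration. You iterate $m_{t,x}=[\mathcal{A}^n(x)]^*_* m_{t,\sigma^n x}$ and invoke a ``concentration'' argument, correctly flagging that one must know $m_{t,\sigma^n x}$ charges no projective hyperplanes. You then appeal to items~(1) and~(3) of Theorem~\ref{BV's results} for this, ``in complete analogy with the scheme of \cite{bonatti2004lyapunov}.'' But those items are stated (and proved in Bonatti--Viana) only for measures that are already $H^u$-invariant for the cocycle under consideration --- here, $(\mathcal{A}_*^{-1},H^s)$-invariant. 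Pinching and twisting alone, or the bounds on $\hat h_t$, do not by themselves deliver the no-mass-on-subspaces property; they enter through that classification theorem, which is not available to you until you have verified the $s$-state property of $m_t$. As it stands your argument is circular: you are effectively re-deriving the structure theorem for $s$-states of $\mathcal{A}_*^{-1}$ while quietly assuming its hypothesis.

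The paper closes exactly this gap as the key step: using item~(4) of Theorem~\ref{BV's results} applied to $\mathcal{A}_*^{-1}$ (a martingale-convergence statement about conditionals along backward orbits, valid for the natural-extension lift), it shows that the conditional $m_{t,x}$ coincides with $\hat m_{t,\Pi x}$, i.e.\ depends only on the one-sided coordinate. Since, after the cohomological reduction, the stable holonomies of $\mathcal{A}_*^{-1}$ are the identity, this immediately makes $m_t$ an $(\mathcal{A}_*^{-1},H^s)$-invariant measure. Only then do items~(3) and~(5), and the Bonatti--Viana uniqueness of such measures, become applicable and yield $m_{t,x}=\delta_{\xi_*(x)}$. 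To repair your proof you should insert this intermediate step --- establish that $m_{t,x}=\hat m_{t,\Pi x}$, hence $H^s$-invariance --- before invoking any part of Theorem~\ref{BV's results} beyond item~(4).
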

\begin{proof}


First, we show that $\hat m_t$ is an $\hat{F}_{\A_{\ast}^{-1}}$ invariant. 
Note that
$ 
\left(\mathcal{L}_t f_1\right) \cdot f_2=\mathcal{L}_t\left(f_1 \cdot f_2 \circ \hat{F}_{\mathcal{A}_*^{-1}}\right)
$ 
for every continuous functions $f_1, f_2 \in C\left(\Sigma^{+} \times \P\right)$,
where $\mathcal{A}_*^{-1}$ is the adjoint of the inverse cocycle for $\mathcal{A}$.  This implies that
$$\begin{aligned}
\hat m_t(f)=\hat{\nu_t}\left({\hat h_t} f\right) 
& =\hat{\nu_t}\left(\rho_t^{-1} \mathcal{L}_t {\hat h_t} \cdot f\right) \\
& =\rho_t^{-1} \hat{\nu_t}\left(\mathcal{L}_t\left({\hat h_t} \cdot(f \circ\hat{F}_{\mathcal{A}_*^{-1}})\right)\right) \\
& =\hat{\nu_t}\left({\hat h_t} \cdot(f \circ \hat{F}_{\mathcal{A}_*^{-1}})\right)
= \hat m_t\left((f \circ \hat{F}_{\mathcal{A}_*^{-1}}\right),
\end{aligned}
$$
which shows that  $\hat m_t$ is $\hat{F}_{\mathcal{A}_*^{-1}}$-invariant. 
As ${F}_{\mathcal{A}_*^{-1}}: \Sigma \times \P \to \Sigma \times \P$ is an extension of $\hat{F}_{\mathcal{A}_*^{-1}}$ and $(\pi^+\times id)^{-1}(x,u)=W^s_{\loc}(x)\times\{u\}$
then it well known that there exists a unique ${F}_{\mathcal{A}_*^{-1}}$-invariant probability measure $m_t$ so that $\pi^+_*m_t=\hat m_t$.
\color{black}

Now, we prove uniqueness.  
Let $(\hat m_{t,x})_{x\in \Sigma^+}$ be the disintegration of $\hat m_t$ on the measurable partition $(\{x\} \times \P)_{x\in \Sigma^+}$.
By item (4) in Theorem~\ref{BV's results} equation (applied to $\A_{\ast}^{-1}$),
\[
m_{t,x} = \lim_{n \to \infty} (\A_{\ast}^{-1})^n (\Pi(\sigma^{-n} x)) \hat m_{t,\Pi(\sigma^{-n} x)} = \lim_{n \to \infty} \hat m_{t,x} = \hat m_{t,x}.
\]
This implies that $\hat m_{t,x} = \hat m_{t,y}$ for any $y\in W^{s}_{\text{loc}}(x)$ and for $\widehat{\pi_*m_t}$-a.e. $x$. Since the local stable holonomies for $\A_{\ast}^{-1}$ are identically equal to the identity $I$ (due to the same reasoning which shows that $H^s \equiv I$ for $\A$, see \eqref{stable holonomy}), it follows that $\nu$ is $(\A_{\ast}^{-1}, H^s)$-invariant. This proves the uniqueness. Indeed, this is because the $\left(\mathcal{A}_*^{-1}, H^s\right)$-invariant measure is unique when $\mathcal{A}$ (hence $\mathcal{A}_*^{-1}$ ) is 1-typical, hence the last statement in the lemma follows as a consequence of item (4) in Theorem \ref{BV's results}. 
This proves the lemma.
\end{proof}

\begin{rem}\label{consequnece of the definition}
Note that 
$\int f(x, \bar{u}) \mathrm{d} m_t(x, \bar{u})=\int f\left(x, \overline{\xi_*}(x)\right) d\widehat{\pi_*m_t}(x)$ 
for every integrable function $f$,
as a consequence of Lemma~\ref{invariant measure}.
Moreover, when $t=0$ it is simple to check that 
${\hat h_0}\equiv 1$ and that $\pi_*\hat m_0 =\pi_*\hat \nu_0=\hat \mu$ is the probability measure given in Subsection~\ref{subsec-psi}.
\end{rem}

\begin{cor}\label{not supported on a projective subspace}
If the $\sigma$-invariant probability measure  $\pi_*m_t$ has local product structure then $\hat{\nu_t}$ is not supported on a proper projective subspace.
\end{cor}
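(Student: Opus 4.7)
The plan is to argue by contradiction, reducing the question about $\hat\nu_t$ to a concrete statement about $\hat m_t$ (which is of Dirac-graph type), and then extract a contradiction with the 1-typicality hypothesis. Concretely, I would proceed as follows.

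First, I use Lemma~\ref{h is bdd}: after normalization, $\hat h_t$ is bounded above and below by positive constants on $\Sigma^+\times\P$, so $\hat\nu_t$ and $\hat m_t$ are mutually absolutely continuous with Radon--Nikodym derivatives bounded away from $0$ and $\infty$. Consequently, $\hat\nu_t$ is supported on a set of the form $\Sigma^+\times V$ if and only if $\hat m_t$ is. Next, I combine Lemma~\ref{invariant measure} and Remark~\ref{consequnece of the definition}: since $\pi^+_*m_t=\hat m_t$, since $m_{t,x}=\delta_{\xi_*(x)}$ for $\pi_*m_t$-a.e.\ $x$, and since the slowest Oseledets direction $\xi_*$ of $\A_*^{-1}$ depends only on the forward orbit $\Pi(x)\in\Sigma^+$, the measure $\hat m_t$ concentrates on the graph of $\xi_*$:
$$
\hat m_t \;=\; \int_{\Sigma^+}\delta_{(\hat x,\,\xi_*(\hat x))}\,d\,\widehat{\pi_*m_t}(\hat x).
$$
In particular, $\hat\nu_t(\Sigma^+\times V)=1$ forces $\xi_*(\hat x)\in V$ for $\widehat{\pi_*m_t}$-a.e.\ $\hat x\in\Sigma^+$.

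The main step is to derive a contradiction from this concentration property using the 1-typicality of $\A_*^{-1}$ (which is 1-typical whenever $\A$ is, by the remark at the end of Subsection~\ref{subsec:typical}). The local product structure of $\pi_*m_t$, combined with topological mixing of $\sigma$, ensures that $\widehat{\pi_*m_t}$ has full support on $\Sigma^+$. By Lusin's theorem applied to the measurable map $\hat x\mapsto\xi_*(\hat x)$, there is a compact set $K\subset\Sigma^+$ of large $\widehat{\pi_*m_t}$-measure on which $\xi_*$ is continuous and takes values in the closed set $V$. Using the full support of $\widehat{\pi_*m_t}$, one selects sequences in $K$ converging respectively to the fixed point $\hat p$ and to (appropriate shift-iterates of) the homoclinic point $\hat z$. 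The continuity of the canonical holonomies for fiber-bunched cocycles allows one to identify the corresponding limits with the eigenvector $v_1^*$ of $\A^{\mathrm{per}(p)}(p)^*$ (by pinching) and with $\widetilde H_p^{z} v_1^*$ (by the homoclinic-loop construction in \eqref{eq:matrixtildeH}--\eqref{eq:analogueeq} applied to $\A_*^{-1}$). Closedness of $V$ then puts both vectors in $V$, and iterating the argument over backward shifts of $\hat z$ and powers of the return map forces $V$ to contain simultaneously a family $\{v_j^*\colon j\in J\}\cup\{\widetilde H_p^{z} v_i^*\colon i\in I\}$ with $|I|+|J|=d$, violating the twisting condition of Definition~\ref{typical1}(ii) for $\A_*^{-1}$.

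The hard part is precisely the last paragraph: because $\xi_*$ is only defined almost everywhere, one cannot plug $\hat p$ or $\hat z$ directly into $\xi_*$, so the identification has to go through Lusin-continuity sets and continuity of holonomies. Moreover, to produce enough vectors forced into $V$ to contradict twisting, one must exploit the invariance $\A_*^{-1}(\hat x)\xi_*(\hat x)=\xi_*(\sigma\hat x)$ along orbits that approach $\hat p$ through $\hat z$. This is essentially an adaptation to $\A_*^{-1}$ of the argument behind Proposition~5.1 of \cite{bonatti2004lyapunov}, transplanted to our eigenmeasure setting via the graph representation of $\hat m_t$.
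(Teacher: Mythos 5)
Your proposal takes a genuinely different and much longer route than the paper, and the long route as sketched has a gap. The paper's proof is a one-liner: since $\hat m_t\ll\hat\nu_t$ with H\"older (hence bounded) density and $\hat m_t$ is $\hat F_{\A_*^{-1}}$-invariant, $\hat m_t$ is the $u$-state for the 1-typical cocycle $\A_*^{-1}$ over the base measure $\pi_*m_t$, which by assumption has local product structure; item (3) of Theorem~\ref{BV's results} then says the conditional measures $\hat m_{t,x}$ give zero mass to any proper projective subspace, which is precisely the statement that $\hat m_t$ (hence $\hat\nu_t$) is not carried by $\Sigma^+\times V$. You instead re-derive BV's Proposition~5.1 from scratch via the graph representation $\hat m_{t,\hat x}=\delta_{\xi_*(\hat x)}$ and the twisting condition.

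The gap in your re-derivation is in the passage from Lusin-continuity of $\xi_*$ on a large compact set $K$ to identifying $\lim_{K\ni\hat x_n\to\hat p}\xi_*(\hat x_n)$ with the eigenvector $v_1^*$ and $\lim_{K\ni\hat x_n\to\hat z}\xi_*(\hat x_n)$ with $\widetilde H_p^z v_1^*$. Lusin's theorem only ensures that $\xi_*$ is continuous \emph{on} $K$; it does not control the limiting value at boundary points $\hat p,\hat z\notin K$, nor does ``continuity of holonomies'' by itself produce the needed equality. What makes BV's argument work is precisely item~(1) of Theorem~\ref{BV's results}, the existence of a \emph{continuous disintegration} $\hat x\mapsto\hat m_{t,\hat x}$ (obtained from $H^u$-invariance and local product structure, not from Lusin's theorem), together with the invariance properties of that disintegration under the cocycle and the unstable holonomy. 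Once you know the disintegration is continuous, the point-mass representation shows $\xi_*$ has a continuous version on the full support, and then the pinching/twisting argument at $\hat p$ and $\hat z$ goes through. Your sketch never invokes the continuous disintegration, so the identification step does not follow from what is stated. The efficient repair is exactly what the paper does: apply item~(3) of Theorem~\ref{BV's results} (which already packages BV's Proposition~5.1) to $\A_*^{-1}$ and $\pi_*m_t$, rather than re-proving it.
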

\begin{proof}
Notice that $\hat{m}_t\ll \hat \nu_t$ is $\hat{F}_{\A^{-1}_{\ast}}$-invariant and that the density is H\"older continuous, hence the corollary is an immediate consequence of
item (3) in Theorem \ref{BV's results}.
\end{proof}

\section{Gibbs measures and equilibrium states}
\label{sec:Gibbs-eq-states}

In this section we prove Theorem~\ref{thm:Main1}. More precisely, we construct $\sigma$-invariant Gibbs measures, which are unique and equilibrium states for the potentials $\Phi_{\A,t}$. Such $\sigma$-invariant probability measures ${\hat \mu_t}$, constructed in Subsection~\ref{subsec:defmut}
from the leading eigenvalues and eigenfunction for the operators $\L_t$, are shown to be mixing.

\subsection{Shift invariant measures}
\label{subsec:defmut}
The construction of equilibrium states in the classical thermodynamical formalism gathers both leading eigenfunction and eigenmeasure (cf.\cite{Bow}). 
This suggests to consider the probability measures $\hat m_t$ on the space $\Sigma^+\times \P$ and ${\hat \mu_t}$ on the space $\Sigma^+$ by
\begin{equation}\label{definition of our measure mt}
\int f(x,\bar u) \mathrm{d}  \hat m_t(x,\bar u):=\int f(x,\bar u) {\hat h_t}(x, \bar{u}) \mathrm{d} \hat{\nu}_t(x, \bar{u}), \quad \forall f \in C^\alpha\left(\Sigma^+ \times \P\right),
\end{equation}
and 
\begin{equation}\label{definition of our measure}
\int f(x) \mathrm{d}  {\hat \mu_t}(x):=\int f(x) {\hat h_t}(x, \bar{u}) \mathrm{d} \hat{\nu}_t(x, \bar{u}), \quad \forall f \in C^\alpha\left(\Sigma^+\right),
\end{equation}
respectively.
\begin{rem}\label{rem: projected measure}
Using cylinder sets generate the sigma-algebra on $\Sigma^+$ and the characteristic functions $\{1_{[I]} \colon I \in \Sigma^*\}$ are H\"older continuous it is clear from the definition that ${\hat \mu_t}=\pi_* \hat m_t$ for every $-t_1\le t\le t_1$.    
\end{rem}

The next lemma will be instrumental to describe the measure of cylinder sets.

\begin{lem}\label{definition of Gibbs}
Let ${\hat \mu_t}$ be defined by \eqref{definition of our measure}.
For any $I \in \Sigma^*$, we have
$$
\begin{aligned}
{\hat \mu_t}([I])=&\frac{1}{\rho_t^{|I|}}\int \L_t^{|I|}(1_{([I]}{\hat h_t})(x, \bar{u})d\hat{\nu}_t(x,\bar{u})\\
&=\small{\frac{1}{\rho_t^{|I|}}\int \sum_{y \in \sigma^{-|I|}x} g^{(|I|)}(y)\left\|\mathcal{A}^{[|I|]}(y) \frac{u}{\|u\|}\right\|^t 1_{[I]}(y) {\hat h_t}\left(y, \overline{\mathcal{A}^{[|I|]}(y) u}\right) \mathrm{d} \hat{\nu}_t(x, \bar{u})}
\end{aligned}
$$
for every $|t|\le t_0$. Moreover, for each $-t_1\le t\le t_1$ there exists $C_0>0$ and $R_t>0$ satisfying $\lim_{t\to 0} R_t=1$ and so that 
\begin{equation}
    \label{eq:comparisonmut}
    C_0^{-1} R_t^{-|I|} \hat \mu([I]) \le \hat \mu_t([I]) \le C_0 R_t^{|I|} \hat \mu([I]),\quad\text{for every $I\in \Sigma^*$.}
\end{equation}
\end{lem}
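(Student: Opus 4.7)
The first identity follows by iterating the dual eigenvalue equation $\mathcal{L}_t^{\ast}\hat\nu_t=\rho_t\hat\nu_t$ a total of $|I|$ times to obtain $\int f\,d\hat\nu_t=\rho_t^{-|I|}\int\mathcal{L}_t^{|I|}f\,d\hat\nu_t$ for any bounded Borel $f$. Applied to $f=\mathbb{1}_{[I]}\hat h_t$, which belongs to $C^{\alpha}(\Sigma^{+}\times\mathbb{P}\mathbb{R}^d)$ since cylinders are clopen in the symbolic metric, the left-hand side equals $\hat\mu_t([I])$ by the very definition~\eqref{definition of our measure}. Expanding $\mathcal{L}_t^{|I|}$ through $|I|$ iterations of~\eqref{the definition of the transfer operator} then yields the second, explicit expression.

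For the comparison with $\hat\mu([I])$, the decisive observation is that $\mathbb{1}_{[I]}(y)$ selects at most one preimage of $x$ under $\sigma^{|I|}$, namely the concatenation $y=(i_0,\dots,i_{|I|-1},x_0,x_1,\dots)$, which lies in $\Sigma^{+}$ iff $Q(i_{|I|-1},x_0)=1$. Fix any reference point $x_I\in[I]$. H\"older bounded distortion for $\log g$ gives $g^{(|I|)}(y)\asymp g^{(|I|)}(x_I)$; bounded distortion~\eqref{BDD-dis} for $\mathcal{A}$ yields $\|\mathcal{A}^{[|I|]}(y)\|\asymp\|\mathcal{A}(I)\|$; and Lemma~\ref{h is bdd} provides uniform two-sided positive bounds on $\hat h_t$ for $|t|<t_1$. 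The fiber-bunching condition~\eqref{fiber} iterates to $\|\mathcal{A}^{[|I|]}(y)\|\cdot\|\mathcal{A}^{[|I|]}(y)^{-1}\|<2^{\theta|I|}$, equivalently $\sigma_d(\mathcal{A}^{[|I|]}(y))\geq 2^{-\theta|I|}\|\mathcal{A}^{[|I|]}(y)\|$, so that for every real $t$
\[
2^{-\theta|t||I|}\|\mathcal{A}^{[|I|]}(y)\|^{t} \;\leq\; \bigl\|\mathcal{A}^{[|I|]}(y)\,u/\|u\|\bigr\|^{t} \;\leq\; 2^{\theta|t||I|}\|\mathcal{A}^{[|I|]}(y)\|^{t}.
\]
Letting $M:=\sup_{x\in\Sigma}\max\{\|\mathcal{A}(x)\|,\|\mathcal{A}(x)^{-1}\|\}<\infty$ we get $\|\mathcal{A}(I)\|^{t}\in[M^{-|t||I|},M^{|t||I|}]$, while real analyticity of $t\mapsto\rho_t$ with $\rho_0=1$ (Proposition~\ref{main-prop}) allows us to shrink $t_1$ so that $\rho_t\in[1-\varepsilon,1+\varepsilon]$ for $|t|<t_1$ and any preassigned small $\varepsilon$. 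Collecting these estimates gives
\[
\hat\mu_t([I]) \;\asymp_{C_0}\; g^{(|I|)}(x_I)\cdot R_t^{\pm|I|}\cdot \hat\nu_t\bigl(\{(x,\bar u)\colon Q(i_{|I|-1},x_0)=1\}\bigr),
\]
with $R_t:=(1-\varepsilon)^{-1}\cdot 2^{\theta|t|}\cdot M^{|t|}\to 1$ as $t\to 0$; the Gibbs property $\hat\mu([I])\asymp g^{(|I|)}(x_I)$ from Subsection~\ref{subsec-psi} then converts the right-hand side to $\hat\mu([I])$.

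The one missing ingredient for the \emph{lower} bound is that $\hat\nu_t(\{Q(i_{|I|-1},x_0)=1\})$ stays bounded below uniformly in $I$ and in $|t|<t_1$. This follows because at $t=0$ we have $\pi^{+}_{*}\hat\nu_0=\hat\mu$, which assigns positive mass to every symbol cylinder $[j]$; real analyticity of $\hat\nu_t$ in $t$ (Proposition~\ref{main-prop}) keeps these masses bounded below for $|t|$ small, and topological mixing of $(\Sigma,\sigma)$ ensures the admissible set always contains at least one such cylinder. The main technical obstacle is precisely the directional factor $\|\mathcal{A}^{[|I|]}(y)\,u/\|u\|\|^{t}$, which could in principle differ from $\|\mathcal{A}^{[|I|]}(y)\|^{t}$ by an unbounded amount: fiber-bunching is what converts this potential discrepancy into an exponential factor $2^{\theta|t||I|}$ with per-step base $2^{\theta|t|}\to 1$, producing exactly the form $R_t^{\pm|I|}$ needed; the rest is careful bookkeeping of the multiplicative constants, all of which remain uniform on $|t|<t_1$.
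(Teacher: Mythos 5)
Your proof is correct and takes essentially the same route as the paper: the first display follows by iterating the dual eigenvalue relation $\mathcal{L}_t^*\hat\nu_t=\rho_t\hat\nu_t$ and expanding $\mathcal{L}_t^{|I|}$, and the comparison~\eqref{eq:comparisonmut} is obtained by bounding the integrand pointwise, absorbing all $|I|$-dependent factors into a per-step error $R_t^{\pm|I|}$ with $R_t\to 1$, and finishing with the Gibbs property of $\hat\mu$. Two remarks on the differences. First, to control the directional factor $\bigl\|\mathcal{A}^{[|I|]}(y)u/\|u\|\bigr\|^t$ you invoke fiber-bunching to get $\sigma_d(\mathcal{A}^{[n]})\geq 2^{-\theta n}\|\mathcal{A}^{[n]}\|$; the paper's proof uses the cruder, always-valid bounds $\|Av\|\leq\|A\|\|v\|$ and $\sigma_d(\mathcal{A}^{[n]})\geq\|\mathcal{A}^{-1}\|_\infty^{-n}$. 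Since all one needs is a per-step base tending to $1$ with $t$, both work; fiber-bunching is not actually required at this step. Second, and more substantively, you correctly notice that after the pointwise bounds one is left with the factor $\hat\nu_t\bigl(\{x:Q(i_{|I|-1},x_0)=1\}\times\mathbb{P}\mathbb{R}^d\bigr)$, and that the \emph{lower} bound needs this to be bounded below uniformly over admissible words and over $|t|$ small; you supply that by combining $\pi^+_*\hat\nu_0=\hat\mu$ with the analyticity of $t\mapsto P_t$ (hence weak$^*$-continuity of $\hat\nu_t$) from Proposition~\ref{main-prop} and~\eqref{eq2}. The paper passes over this point: in its lower-bound chain it replaces $\int g^{(|I|)}(Ix)\,\mathrm{d}\hat\nu_t$ by $\hat\mu_0([I])$ as though these were equal, whereas the correct statement is only a two-sided comparison (Gibbs property of $\hat\mu$ together with uniform bounds on the $\hat\nu_t$-mass of the admissible set), which is precisely the argument you make explicit. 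So your handling of that step is the more careful of the two.
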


\begin{proof}
 Recalling that $\hat{\nu}_t$  is the eigenmeasure for $\mathcal{L}_t$ and that $1_{[I]}$ is H\"older continuous for each  $I \in \Sigma^*$  we have
$$
\begin{aligned}
{\hat \mu_t}([I]) & =\int 1_{[I]}(x) {\hat h_t}(x, \bar{u}) \mathrm{d} \hat{\nu}_t(x, \bar{u}) 
=\frac{1}{\rho_t^{|I|}} \int  \L_t^{|I|}(1_{([I]}{\hat h_t})(x, \bar{u})d\hat{\nu}_t(x,\bar{u})
\end{aligned}
$$
which proves the first part of the lemma.  The rest of the proof of this lemma is inspired by \cite{Rush}. Now, if $0\le t\le t_1$ then using Lemma~\ref{h is bdd},
$$
\begin{aligned}
{\hat \mu_t}([I])
&=\small{\frac{1}{\rho_t^{|I|}}\int \sum_{y \in \sigma^{-|I|}x} g^{(|I|)}(y)\left\|\mathcal{A}^{[|I|]}(y) \frac{u}{\|u\|}\right\|^t 1_{[I]}(y) {\hat h_t}\left(y, \overline{\mathcal{A}^{[|I|]}(y) u}\right) \mathrm{d} \hat{\nu}_t(x, \bar{u})}
\\
&\le \small{\frac{1}{\rho_t^{|I|}}\int \sum_{y \in \sigma^{-|I|}x} g^{(|I|)}(y)\left\|\mathcal{A}^{[|I|]}(y) \frac{u}{\|u\|}\right\|^t 1_{[I]}(y) {\hat h_t}\left(y, \overline{\mathcal{A}^{[|I|]}(y) u}\right) \mathrm{d} \hat{\nu}_t(x, \bar{u})}
\\ 
&  \le C \small{\frac{1}{\rho_t^{|I|}} \sup_{|t\le t_1|} \|\hat h_t\|_\infty 
\cdot \max_{x\in [I]}\|\A^{[|I|]}(x)\|^{t}
\int g^{(|I|)}(Ix)  \mathrm{d} \hat{\nu}_t(x, \bar{u})} \\
& \le C \sup_{|t\le t_1|} \|\hat h_t\|_\infty 
\cdot \Big(\frac{\|\A\|_\infty^{t}}{\rho_t}\Big)^{|I|} \cdot {\hat \mu}_0 ([I]).
\end{aligned}
$$
Conversely, 
$$
\begin{aligned}
{\hat \mu_t}([I])
&=\small{\frac{1}{\rho_t^{|I|}}\int \sum_{y \in \sigma^{-|I|}x} g^{(|I|)}(y)\left\|\mathcal{A}^{[|I|]}(y) \frac{u}{\|u\|}\right\|^t 1_{[I]}(y) {\hat h_t}\left(y, \overline{\mathcal{A}^{[|I|]}(y) u}\right) \mathrm{d} \hat{\nu}_t(x, \bar{u})}
\\
&\ge \small{\frac{1}{\rho_t^{|I|}}\int \sum_{y \in \sigma^{-|I|}x} g^{(|I|)}(y)\left\|\mathcal{A}^{[|I|]}(y) \frac{u}{\|u\|}\right\|^t 1_{[I]}(y) {\hat h_t}\left(y, \overline{\mathcal{A}^{[|I|]}(y) u}\right) \mathrm{d} \hat{\nu}_t(x, \bar{u})}
\\ 
&  \ge C \small{\frac{1}{\rho_t^{|I|}} \inf_{(t,x,\bar u)\in [0,t_1] \times \Sigma^+ \times \P} |\hat h_t(x,u)|
\cdot \min_{x\in [I]}\|\A(x)^{-1}\|^{-t|I|}
\int g^{(|I|)}(Ix)  \mathrm{d} \hat{\nu}_t(x, \bar{u})} \\
& = C \inf_{(t,x,\bar u)\in [0,t_1] \times \Sigma^+ \times \P} |\hat h_t(x,u)|
\cdot 
 \Big(\frac{\min_{x\in [I]}\|\A(x)^{-1}\|^{-t}}{\rho_t}\Big)^{|I|} \cdot {\hat \mu}_0 ([I])
\end{aligned}
$$
This proves the lemma in case of positive $t$.
The proof for $-t_1\le t<0$ is identical, hence omitted. 
\end{proof}

We proceed to show that the measures ${\hat \mu_t}$ are $\sigma$-invariant and ergodic. Observe that any function $f \in C(\Sigma^+)$ can naturally be regarded as a function on $\Sigma^+ \times \P$ by defining $f(x, \bar{u}) := f(x)$, for some fixed $\bar u\in \P$.

\begin{lem}
${\hat \mu_t}$ is a $\sigma$-invariant probability measure.
\end{lem}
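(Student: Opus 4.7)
The plan is to verify both claims by directly manipulating the eigenrelations $\mathcal{L}_t \hat h_t = \rho_t \hat h_t$ and $\mathcal{L}_t^* \hat\nu_t = \rho_t \hat\nu_t$. For the total mass, I would evaluate the projection identity \eqref{eq2} at $f = \hat h_t$: since $P_t \hat h_t = \hat h_t$, this forces $\int \hat h_t\, d\hat\nu_t = 1$, so taking $f \equiv 1$ in \eqref{definition of our measure} shows $\hat\mu_t(\Sigma^+) = 1$. Positivity of $\hat\mu_t$ follows from Lemma \ref{h is bdd} together with positivity of $\hat\nu_t$, so $\hat\mu_t$ is a probability measure.

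For $\sigma$-invariance, I would fix $f \in C^\alpha(\Sigma^+)$ and view it as an element of $C^\alpha(\Sigma^+ \times \P)$ that is constant in the projective coordinate. The key tool, already exploited in the proof of Lemma \ref{invariant measure}, is the identity
$$(\mathcal{L}_t f_1) \cdot f_2 = \mathcal{L}_t\bigl(f_1 \cdot (f_2 \circ \hat F_{\mathcal{A}_*^{-1}})\bigr), \qquad f_1, f_2 \in C(\Sigma^+\times \P).$$
Applied with $f_1 = \hat h_t$ and $f_2 = f$, this reads $\rho_t \hat h_t f = \mathcal{L}_t(\hat h_t \cdot (f \circ \hat F_{\mathcal{A}_*^{-1}}))$. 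Integrating both sides against $\hat\nu_t$ and using $\mathcal{L}_t^* \hat\nu_t = \rho_t \hat\nu_t$ on the right-hand side yields
$$\rho_t \int f\, \hat h_t\, d\hat\nu_t \;=\; \rho_t \int (f \circ \hat F_{\mathcal{A}_*^{-1}})\, \hat h_t\, d\hat\nu_t.$$
Since $f$ depends only on the base coordinate, $(f \circ \hat F_{\mathcal{A}_*^{-1}})(x,\bar u) = f(\sigma x)$, and cancelling $\rho_t$ produces $\int f\, d\hat\mu_t = \int (f\circ\sigma)\, d\hat\mu_t$, which is the required invariance.

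A shorter alternative would be to combine Remark \ref{rem: projected measure} (so that $\hat\mu_t = \pi^+_* \hat m_t$) with the $\hat F_{\mathcal{A}_*^{-1}}$-invariance of $\hat m_t$ already established in Lemma \ref{invariant measure}, using that $\pi^+$ semiconjugates $\hat F_{\mathcal{A}_*^{-1}}$ to $\sigma$. I would nonetheless favour the direct transfer-operator computation above, as it makes this lemma self-contained and does not rely on the full conclusion of Lemma \ref{invariant measure}. There is no substantive obstacle: the verification is entirely formal, resting only on the eigenrelations produced by Proposition \ref{main-prop}.
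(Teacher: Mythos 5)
Your proposal is correct and takes essentially the same route as the paper: both rely on the same multiplicative identity for $\mathcal{L}_t$ (the paper states it directly as $(\mathcal{L}_t\hat h_t)\cdot f=\mathcal{L}_t(\hat h_t\cdot(f\circ\sigma))$ for $f$ depending only on the base; you invoke the more general identity with $\hat F_{\mathcal{A}_*^{-1}}$, which specializes to that), and then integrate against $\hat\nu_t$ using $\mathcal{L}_t^*\hat\nu_t=\rho_t\hat\nu_t$. Your additional check of the normalization $\int\hat h_t\,d\hat\nu_t=1$ via \eqref{eq2} and positivity via Lemma~\ref{h is bdd} is a correct and worthwhile completion of the ``probability measure'' clause, which the paper's proof leaves implicit.
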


\begin{proof}
Our proof follows an adaptation of \cite[Lemma 1.13]{Bow}. For $f \in C^{\alpha}\left(\Sigma^{+}\right)$,
$$\begin{aligned}
\left(\left(\mathcal{L}_t {\hat h_t}\right) \cdot f\right)(x, \bar{u}) & =\sum_{y \in \sigma^{-1} x} g(y)\left\|\mathcal{A}(y)^* \frac{\bar u}{\|\bar u\|}\right\|^t {\hat h_t}\left(y, \overline{\mathcal{A}(y)^* \bar u}\right) f(x) \\
& =\sum_{y \in \sigma^{-1} x} g(y)\left\|\mathcal{A}(y)^* \frac{\bar u}{\|\bar u\|}\right\|^t {\hat h_t}\left(y, \overline{\mathcal{A}(y)^* \bar u}\right) f(\sigma y)\\
&=\mathcal{L}_t\left({\hat h_t} \cdot(f \circ \sigma)\right)(x, \bar{u}).
\end{aligned}
$$
Thus, given an arbitrary $f \in C^{\alpha}\left(\Sigma^{+}\right)$,
$$\begin{aligned}
{\hat \mu_t}(f) & =\hat{\nu}_t\left({\hat h_t} f\right) \\
& =\hat{\nu}_t\left(\rho_t^{-1} \mathcal{L}_t {\hat h_t} \cdot f\right) \\
& =\rho_t^{-1} \hat{\nu}_t\left(\mathcal{L}_t\left({\hat h_t} \cdot(f \circ \sigma)\right)\right) \\
& =\hat{\nu}_t\left({\hat h_t} \cdot(f \circ \sigma)\right)  ={\hat \mu_t}(f \circ \sigma) .
\end{aligned}
$$
\end{proof}

\begin{lem} The measure ${\hat \mu_t}$ is mixing (hence ergodic).
\end{lem}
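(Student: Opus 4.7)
The plan is to upgrade the spectral gap for $\mathcal{L}_t$ provided by Corollary~\ref{corollary} into exponential decay of correlations for $\hat\mu_t$, which trivially implies mixing. The argument proceeds in three steps: (i) rewrite $\int f\cdot (g\circ\sigma^n)\,d\hat\mu_t$ as an integral of $\rho_t^{-n}\mathcal{L}_t^n(F\hat h_t)$ against $G\,d\hat\nu_t$; (ii) apply the spectral gap; (iii) extend from H\"older continuous test functions to measurable sets by density.

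For step (i), the key observation, already used in the proof of Lemma~\ref{invariant measure}, is the duality identity
\begin{equation*}
(\mathcal{L}_t \phi_1) \cdot \phi_2 \;=\; \mathcal{L}_t\bigl(\phi_1 \cdot (\phi_2 \circ \hat F_{\mathcal{A}^{-1}_*})\bigr),
\end{equation*}
which follows directly from the definition of $\mathcal{L}_t$ in \eqref{the definition of the transfer operator} together with the fact that $\mathcal{A}^{-1}_*(y)\cdot \mathcal{A}(y)^* u = u$. Iterating and integrating against $\hat\nu_t$ (using $\mathcal{L}_t^* \hat\nu_t = \rho_t\,\hat\nu_t$) yields
\begin{equation*}
\int \phi_1 \cdot (\phi_2 \circ \hat F^n_{\mathcal{A}^{-1}_*})\, d\hat\nu_t \;=\; \rho_t^{-n}\int \mathcal{L}_t^n(\phi_1)\cdot \phi_2\, d\hat\nu_t.
\end{equation*}
Given $f,g\in C^\alpha(\Sigma^+)$, define the lifts $F(x,\bar u):=f(x)$ and $G(x,\bar u):=g(x)$ in $C^\alpha(\Sigma^+\times\mathbb{P}\R^d)$. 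Since $\hat F_{\mathcal{A}^{-1}_*}$ projects to $\sigma$, one has $G\circ \hat F^n_{\mathcal{A}^{-1}_*}(x,\bar u) = g(\sigma^n x)$; applying the identity above with $\phi_1 = F\hat h_t$ and $\phi_2 = G$ gives
\begin{equation*}
\int f\cdot (g\circ\sigma^n)\, d\hat\mu_t \;=\; \int F \hat h_t\cdot (G\circ \hat F^n_{\mathcal{A}^{-1}_*})\, d\hat\nu_t \;=\; \rho_t^{-n}\int \mathcal{L}_t^n(F\hat h_t)\cdot G\, d\hat\nu_t.
\end{equation*}

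For step (ii), since $F\hat h_t\in C^\alpha(\Sigma^+\times\mathbb{P}\R^d)$ (because $\hat h_t$ is H\"older continuous and bounded by Lemma~\ref{h is bdd}), Corollary~\ref{corollary} applied to $F\hat h_t$ gives
\begin{equation*}
\bigl\| \rho_t^{-n}\mathcal{L}_t^n(F\hat h_t) - \hat\mu_t(f)\,\hat h_t \bigr\|_\alpha \;\leq\; C\,\|F\hat h_t\|_\alpha\,\beta^n,
\end{equation*}
where we used $\langle F\hat h_t,\hat\nu_t\rangle = \int f\,d\hat\mu_t = \hat\mu_t(f)$. Integrating against $G\,d\hat\nu_t$ and recalling $\int G\hat h_t\, d\hat\nu_t = \hat\mu_t(g)$ yields the correlation estimate
\begin{equation*}
\Bigl| \int f\cdot (g\circ\sigma^n)\,d\hat\mu_t - \hat\mu_t(f)\,\hat\mu_t(g) \Bigr| \;\leq\; C\,\|F\hat h_t\|_\alpha\,\|G\|_\infty\,\beta^n.
\end{equation*}

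For step (iii), since H\"older continuous functions are dense in $L^2(\hat\mu_t)$ and $\hat\mu_t$ is a Borel probability measure, a standard approximation argument extends this to $\hat\mu_t(A\cap\sigma^{-n}B)\to \hat\mu_t(A)\,\hat\mu_t(B)$ for all Borel sets $A, B\subset\Sigma^+$, which is the mixing property. Ergodicity is then automatic. There is no serious obstacle; the only technical point is verifying the duality identity for $\mathcal{L}_t$ and $\hat F_{\mathcal{A}^{-1}_*}$, which is a direct calculation.
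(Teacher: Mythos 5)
Your argument is correct and follows essentially the same route as the paper: both proofs rewrite $\int f\cdot(g\circ\sigma^n)\,d\hat\mu_t$ via the intertwining identity for $\mathcal{L}_t$ and the eigenmeasure property of $\hat\nu_t$, then invoke the spectral gap of Corollary~\ref{corollary} and finish by approximation. The only cosmetic difference is that the paper applies the estimate directly to indicator functions of cylinders (which are H\"older continuous), whereas you work with general H\"older observables and thereby record the slightly stronger exponential decay of correlations along the way.
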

\begin{proof}
This follows by an argument similar to the proof of \cite[Proposition 1.14]{Bow}. We proceed to show that, for any cylinder sets $E, F \subset \Sigma^{+}$,
$ 
\lim _{n \rightarrow \infty} {\hat \mu_t}\left(E \cap \sigma^{-n} F\right)={\hat \mu_t}(E) {\hat \mu_t}(F)
$ 
(the extension for arbitrary Borel sets $E,F$ follows a standard argument).
For any $f_1, f_2 \in C^{\alpha}\left(\Sigma^{+}\right)$ and $n\ge 1$,
$$
\begin{aligned}
\left(\mathcal{L}_t^n\left({\hat h_t} f_1\right) \cdot f_2\right)(x, \bar{u}) & =\sum_{y \in \sigma^{-n_x}} g(y)\left\|\mathcal{A}(y)^* \frac{u}{\|u\|}\right\|^t {\hat h_t}\left(y, \overline{\mathcal{A}(y)^* u}\right) f_1(y) f_2(x) \\
& =\sum_{y \in \sigma^{-n} x} g(y)\left\|\mathcal{A}(y)^* \frac{u}{\|u\|}\right\|^t {\hat h_t}\left(y, \overline{\mathcal{A}(y)^* u}\right) f_1(y) f_2\left(\sigma^n y\right) \\
& =\mathcal{L}_t^n\left({\hat h_t} f_1 \cdot\left(f_2 \circ \sigma^n\right)\right)(x, \bar{u}).
\end{aligned}
$$
Thus, for $E=\left[x\right]_k$ and $F=\left[y \right]_{\ell}$, 
$$
\begin{aligned}
{\hat \mu_t}\left(E \cap \sigma^{-n} F\right) & ={\hat \mu_t}\left(1_E \cdot 1_{\sigma^{-n}(F)}\right) \\
& ={\hat \mu_t}\left(1_E \cdot\left(1_F \circ \sigma^n\right)\right) 
 =\nu_t\left({\hat h_t} 1_E \cdot\left(1_F \circ \sigma^n\right)\right) \\
& =\rho_t^{-n} \mathcal{L}_t^{* n} \hat{\nu}_t\left({\hat h_t} 1_E \cdot\left(1_F \circ \sigma^n\right)\right) 
 =\hat{\nu}_t\left(\rho_t^{-n} \mathcal{L}_t^n\left({\hat h_t} 1_E \cdot\left(1_F \circ \sigma^n\right)\right)\right. \\
& =\hat{\nu}_t\left(\rho_t^{-n} \mathcal{L}_t^n\left({\hat h_t} 1_E\right) \cdot 1_F\right) 
\end{aligned}
$$
for every $n\ge 1$.
Therefore, ${\hat h_t} 1_E \in C^\alpha\left(\Sigma^{+} \times \P\right)$, Corollary \ref{corollary} yields
$$
\begin{aligned}
\left.\mid {\hat \mu_t}(E) \cap \sigma^{-n} F\right)-{\hat \mu_t}(E) {\hat \mu_t}(F) \mid & =\left|\hat{\nu}_t\left(\rho_t^{-n} \mathcal{L}_t^n\left({\hat h_t} 1_E\right) \cdot 1_F\right)-\hat{\nu}_t\left({\hat h_t} 1_E\right) \hat{\nu}_t\left({\hat h_t} 1_F\right)\right| \\
& =\left|\hat{\nu}_t\left(\left(\rho_t^{-n} \mathcal{L}_t^n\left({\hat h_t} 1_E\right)-\hat{\nu}_t\left({\hat h_t} 1_E\right) {\hat h_t}\right) 1_F\right)\right| \\
& \leq\left\|\rho_t^{-n} \mathcal{L}_t^n\left({\hat h_t} 1_E\right)-\hat{\nu}_t\left({\hat h_t} 1_E\right) {\hat h_t}\right\|_{\infty} \hat{\nu}_t\left(1_F\right), 
\end{aligned}
$$
which tends to zero as $n$ tends to infinity.
\end{proof}






\subsection{Large deviations}
Throughout this section we will always assume that $\A$ is a 1-typical fiber-bunched cocycle.
Let $({\hat \mu_t})_{-t_1\le t\le t_1}$ be the family of probability measures defined in the previous subsection. Recall that $\hat \mu_0=\hat \mu$ is the $\sigma$-invariant probability measure defined in Subsection \ref{subsec-psi} (cf. Remark~\ref{consequnece of the definition}).

\medskip
We will collect some results that provide large deviations estimates for linear cocycles.
Gou\"ezel and Stoyanov proved exponential large deviations for all Lyapunov exponents in case of 1-typical linear cocycles over the shift and Gibbs measures for H\"older continuous potentials
(cf. \cite[Theorem 1.5 (5)]{GouezelStoyanov2019}). In particular,
for every small enough $\ep>0$ there exists $C:=C(\ep)>0$ such that
\begin{equation}
\label{LD}
\hat{\mu}\left\{x\in \Sigma^+:\left|\log \left\|\mathcal{A}^{[n]}(x)\right\|-n \lambda_1(\hat{\mu}, \A)\right|\ge n \varepsilon\right\} \le C e^{-C^{-1}n},    
\end{equation}
for every $n\ge 1$.
Furthermore, Rush \cite{Rush} used Gartner-Ellis theorem to prove the following large deviations principle for the probability measures $\hat \nu_t$:
there exists $\delta_0>0$ and for every  $\vep>0$ there exists $\Lambda_\vep>0$ so that 
\begin{equation}
    \label{eq:Rushnut}
\limsup_{n\to\infty} \frac1n \log \hat\nu_t
    \Big((x,\bar u)\in \Sigma^+\times \P \colon \Big|\log \|\A^{[n]}(x)\bar u\| - n \lambda_1(\hat \mu_t,\A) \Big|>\vep \Big)
    \le -\Lambda_\vep
\end{equation}
for every $|t|\le \delta_0$ (even though the main results in \cite{Rush} hold strictly for locally constant cocycles, the proof of Proposition 5.1, Corollary~5.2 and Proposition~5.6 in \cite{Rush} explore the transfer operators techniques and hold for more general H\"older continuous cocycles). 
We recall $\overline{\xi_*}(x)$ is the slowest Oseledets' subspace of $\mathcal{A}_*^{-1}$ at $x$. We will need the following auxiliary lemma:

\begin{lem}
\label{le:57rush}
There exists $0<\delta_1<\delta_0$ so that, for each $t\in (-\delta_1,\delta_1)$,
$$ 
\hat\nu_t\left((x,u)\in \Sigma^+ \times \P \colon (x, \bar{u})\neq(x, \overline{\xi_*}(x)) \; \right)=0.
$$ 
\end{lem}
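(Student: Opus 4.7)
The plan is to deduce the statement from Lemma~5.6 by reducing it, via the equivalence $\hat m_t\asymp\hat\nu_t$, to a concentration statement for $\hat m_t$ on the graph of the slowest Oseledets direction of $\A_*^{-1}$.

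First, I would fix $\delta_1\in(0,t_1]$ small enough that by Lemma~5.5 the eigenfunction $\hat h_t$ is uniformly bounded above and below by positive constants on $\Sigma^+\times\P$ for every $t\in(-\delta_1,\delta_1)$. Since $d\hat m_t=\hat h_t\,d\hat\nu_t$, this makes $\hat\nu_t$ and $\hat m_t$ mutually absolutely continuous with bounded Radon--Nikodym derivatives, hence they share the same null sets. It therefore suffices to prove
\[
\hat m_t\bigl\{(x,\bar u)\in\Sigma^+\times\P:\bar u\neq\overline{\xi_*(x)}\bigr\}=0.
\]

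Second, I would apply Lemma~5.6: it guarantees that $\hat m_t$ is $\hat F_{\A_*^{-1}}$-invariant and admits a unique $F_{\A_*^{-1}}$-invariant extension $m_t$ with $\pi^+_*m_t=\hat m_t$. Under the additional hypothesis that the $\sigma$-invariant probability measure $\pi_*m_t$ has local product structure, Lemma~5.6 yields $m_{t,x}=\delta_{\overline{\xi_*(x)}}$ for $\widehat{\pi_*m_t}$-a.e.\ $x$. Because $\A_*^{-1}$ depends only on positive coordinates and hence $\overline{\xi_*}$ is constant along local stable sets, this immediately translates into $\hat m_{t,x}=\delta_{\overline{\xi_*(x)}}$ for $\hat\mu_t$-a.e.\ $x\in\Sigma^+$, giving the required concentration of $\hat m_t$ on the graph of $\overline{\xi_*}$, and consequently of $\hat\nu_t$ by the first step.

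The main obstacle is verifying the local product structure hypothesis for $\pi_*m_t$. The naive approach of combining the cylinder-comparability $\hat\mu_t\asymp R_t^{\pm|I|}\hat\mu$ (once Lemma~6.2 is available) with the classical LPS of $\hat\mu$ in \eqref{eq:LPSm} is problematic, since $R_t^{|I|+|J|}$ grows with the word length; one must shrink $\delta_1$ to keep $R_t$ controlled. A cleaner route, which I would likely take to avoid circular dependence on Section~6, is to dispense with Lemma~5.6 and argue directly from the large deviations estimate \eqref{eq:Rushnut}: combining \eqref{eq:Rushnut} with Kingman's subadditive ergodic theorem for the $\sigma$-invariant $\hat\mu_t$ gives, for $\hat\nu_t$-a.e.\ $(x,\bar u)$,
\[
\lim_{n\to\infty}\frac1n\log\frac{\|\A^{[n]}(x)u\|}{\|\A^n(x)\|\,\|u\|}=0,
\]
which via Lemma~4.2(ii) and the exponential decay of $\gamma_{1,2}(\A^n(x))$ (valid for $\hat\mu_t$-a.e.\ $x$ by 1-typicality) rules out $\bar u\perp\overline{\xi_*(x)}$; the remaining step, which I regard as the hardest, is to upgrade this non-perpendicularity to exact equality $\bar u=\overline{\xi_*(x)}$ by exploiting the $\hat F_{\A_*^{-1}}$-invariance of $\hat m_t$ and the uniqueness of $(\A_*^{-1},H^s)$-invariant measures for $1$-typical cocycles (with $H^s\equiv I$, as in the proof of Lemma~5.6).
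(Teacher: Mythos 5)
There is a genuine gap, and you have actually identified most of it yourself. Your first route (via Lemma~\ref{invariant measure}) is unavoidably circular: the conclusion $m_{t,x}=\delta_{\xi_*(x)}$ there is conditional on $\pi_*m_t=\hat\mu_t$ having local product structure, but in this paper LPS for $\hat\mu_t$ is only established in Corollary~\ref{mut have LPS}, whose proof runs through Theorem~\ref{The measure is Gibbs}, Theorem~\ref{uniqness} and Theorem~\ref{thm:Main2} --- all of which sit downstream of Lemma~\ref{le:57rush} via Lemmas~\ref{LD application}, \ref{LD application2} and~\ref{inequlity for the neqative s}. Your attempted patch, comparing $\hat\mu_t$ to $\hat\mu$ by the cylinder estimate \eqref{eq:comparisonmut}, cannot salvage it for exactly the reason you note: the factor $R_t^{|I|+|J|}$ is not uniform in the word length, so it does not transfer the LPS bound \eqref{eq:LPSm} from $\hat\mu$ to $\hat\mu_t$ with a constant.

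Your second route is the right one in spirit --- the paper's own proof simply defers to Proposition~5.7 of \cite{Rush}, which is a large-deviations argument using \eqref{eq:Rushnut} --- but you leave the crucial final step undone and, more importantly, the tool you propose for it does not escape the circularity. The large deviations estimate plus Lemma~\ref{benoist}(ii) and the exponential decay of $\gamma_{1,2}(\mathcal{A}^n(x))$ can rule out $\hat\nu_t$-a.e.\ concentration on directions $\bar u$ perpendicular to $\overline{\xi_*}(x)$, but passing from non-perpendicularity to $\bar u = \overline{\xi_*}(x)$ requires an argument, and the one you sketch --- uniqueness of $(\mathcal{A}_*^{-1},H^s)$-invariant measures over the base $\hat\mu_t$, with $H^s\equiv I$ --- is itself a statement that Bonatti--Viana establish only for base measures with local product structure (this is exactly the hypothesis under which Theorem~\ref{BV's results} operates). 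So it inherits the same dependency loop that sank your first route. The way out, as in Rush's argument, is to work purely with the transfer-operator-derived large deviation rates for $\hat\nu_t$ (from Gärtner--Ellis) together with the $\hat F_{\mathcal{A}_*^{-1}}$-invariance of $\hat m_t$ --- making no appeal to LPS of $\hat\mu_t$ nor to the BV uniqueness theorem for the perturbed measures --- and you have not carried that out.
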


\begin{proof}
The lemma is a direct consequence of Proposition 5.7 in \cite{Rush}, whose proof relies on the large deviation estimates in \eqref{eq:Rushnut} and holds for non-locally constant cocycles.
\end{proof}

\begin{lem}\label{LD application}
    For every $\ep>0$ there exist $\delta>0$, $C_\vep,\Lambda_\vep>0$ and $N>1$ so that, for every $-\delta\le t\le \delta$:
    \begin{itemize}
    \item[(1)] 
    $
    \hat{\mu}_t\left\{x\in \Sigma^+:
    \left|\frac{1}{n} \log \left\|\mathcal{A}^{[n]}(x) \xi_*\left(\sigma^n x\right)\right\|-\lambda_1(\hat\mu,\A)\right|
    \ge \varepsilon\right\} \le C_\vep e^{-\Lambda_{\vep}n},
    $
\item[(2)] 
$
    \hat{\mu}_t\left\{x\in \Sigma^+:
    \left|\frac{1}{n} \log \left\|\mathcal{A}^{[n]}(x)\right\|-\lambda_1(\hat\mu,\A)\right|
    \ge \varepsilon\right\} \le C_\vep e^{-\Lambda_{\vep}n},
    $
\item[(3)] 
$
\hat{\mu}_t\left\{x\in \Sigma^+:
    \left|\frac{1}{n} \log \gamma_{1,2}\left(\mathcal{A}^{[n]}(x)\right)-\left(\lambda_2(\hat\mu,\A)-\lambda_1(\hat\mu,\A)\right)\right|
    \ge \varepsilon\right\} \le C_\vep e^{-\Lambda_{\vep}n},
$
and
\item[(4)] 
$
\sup_{\bar{v} \in \P} \, \hat{\mu}_t\left\{x\in \Sigma^+:
\left|\frac{1}{n} \log \left\|\mathcal{A}^n(x) \frac{v}{\|v\|}\right\|-\lambda_1(\hat\mu,\A)\right|
    \ge \varepsilon\right\} \le C_\vep e^{-\Lambda_{\vep}n},
$
\end{itemize}
for every $n\ge N$.
Moreover, $\lim_{\vep\to 0} \Lambda_\vep=\Lambda_0>0$.
\end{lem}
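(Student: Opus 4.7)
The strategy is to transfer known exponential large-deviation bounds to the target measure $\hat\mu_t$ from two sources: the Gou\"ezel--Stoyanov bound \eqref{LD} for $\hat\mu$, via the Gibbs-type comparison \eqref{eq:comparisonmut}, and Rush's bound \eqref{eq:Rushnut} for $\hat\nu_t$, via the identity $\hat\mu_t=\pi_*(\hat h_t\,d\hat\nu_t)$ and the uniform bounds on $\hat h_t$ provided by Lemma~\ref{h is bdd}. Throughout, continuity of $t\mapsto\lambda_1(\hat\mu_t,\mathcal{A})$ at $t=0$ (a consequence of Proposition~\ref{main-prop}) lets me absorb $|\lambda_1(\hat\mu_t,\mathcal{A})-\lambda_1(\hat\mu,\mathcal{A})|$ into the LD window after choosing $\delta>0$ small enough depending on $\varepsilon$.

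I would first dispatch (2) and (3). By \eqref{BDD-dis} the event $E_n(\varepsilon):=\{x:|\tfrac{1}{n}\log\|\mathcal{A}^{[n]}(x)\|-\lambda_1(\hat\mu,\mathcal{A})|\ge\varepsilon\}$ is, up to bounded distortion, a union of $n$-cylinders, so \eqref{LD} gives $\hat\mu(E_n(\varepsilon/2))\le Ce^{-n/C}$, and \eqref{eq:comparisonmut} upgrades this to
\[
\hat\mu_t(E_n(\varepsilon))\;\le\;C_0 R_t^n\,\hat\mu(E_n(\varepsilon/2))\;\le\;C\,(R_t\,e^{-1/C})^n,
\]
which decays exponentially once $\delta$ is so small that $\log R_t<1/(2C)$ for $|t|\le\delta$. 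Part (3) is identical after applying the Gou\"ezel--Stoyanov bound to $\sigma_1$ and $\sigma_2$ of $\mathcal{A}^n(x)$ (both covered, since it extends to all Lyapunov exponents of a 1-typical cocycle) and taking the difference.

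Part (1) then follows as a corollary. The equivariance $\mathcal{A}_*^{-n}(x)\overline{\xi_*(x)}=\overline{\xi_*(\sigma^n x)}$ yields $\|\mathcal{A}^{[n]}(x)\xi_*(\sigma^n x)\|=\|\mathcal{A}_*^{-n}(x)\xi_*(x)\|^{-1}$, and since the angle between $\xi_*(x)$ and the smallest singular direction of $\mathcal{A}_*^{-n}(x)$ is controlled by $\gamma_{1,2}(\mathcal{A}^n(x))$ via Lemma~\ref{benoist}(iii) (hence exponentially small off the LD set from part (3)), one gets $\|\mathcal{A}_*^{-n}(x)\xi_*(x)\|\approx\sigma_d(\mathcal{A}_*^{-n}(x))=1/\|\mathcal{A}^n(x)\|$ on an event of overwhelming $\hat\mu_t$-measure. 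Thus $\|\mathcal{A}^{[n]}(x)\xi_*(\sigma^n x)\|\approx\|\mathcal{A}^n(x)\|$ and (1) reduces to (2). Part (4) is the most delicate: Benoist's Lemma~\ref{benoist}(i) gives
\[
\frac{\|\mathcal{A}^n(x)v\|}{\|\mathcal{A}^n(x)\|\|v\|}\in\bigl[\delta_\P(\overline{v_+(\mathcal{A}^{[n]}(x))},\bar v),\;\delta_\P(\overline{v_+(\mathcal{A}^{[n]}(x))},\bar v)+\gamma_{1,2}(\mathcal{A}^n(x))\bigr],
\]
so the upper deviation is absorbed into (2), and the lower deviation, after killing $\gamma_{1,2}$ via (3), reduces to the uniform-in-$\bar v$ estimate $\hat\mu_t\{x:\delta_\P(\overline{v_+(\mathcal{A}^{[n]}(x))},\bar v)<e^{-n\varepsilon/4}\}\le C_\varepsilon e^{-\Lambda_\varepsilon n}$. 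Lemma~\ref{benoist}(iii) with (3) reduces this further to a quantitative H\"older regularity of the distribution of $\overline{\xi_*(x)}$ under $\hat\mu_t$, which I would extract from Corollary~\ref{not supported on a projective subspace} combined with the spectral gap of $\mathcal{L}_t$ on $C^\alpha(\Sigma^+\times\P)$ in Proposition~\ref{main-prop}, via a Le Page--Guivarc'h-type argument applied to test functions of the form $\bar u\mapsto d_\P(\bar u,\bar v^\perp)^s$.

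The main obstacle is this last step: promoting the qualitative non-concentration supplied by Corollary~\ref{not supported on a projective subspace} into an \emph{exponential-scale} H\"older estimate that is \emph{uniform} in the test hyperplane $\bar v^\perp$. The remainder of the argument is bookkeeping, verifying that the constants extracted from each step are uniform in $|t|\le\delta$ (which is granted by the analyticity of the spectral data in Proposition~\ref{main-prop}) and that the LD rates $\Lambda_\varepsilon$ inherit a positive limit $\Lambda_0$ as $\varepsilon\to 0$, which forces $\delta(\varepsilon)$ to shrink so that the bias $|\lambda_1(\hat\mu_t,\mathcal{A})-\lambda_1(\hat\mu,\mathcal{A})|$ remains dominated by $\varepsilon$.
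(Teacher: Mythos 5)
Your high-level strategy matches the paper's, which dispatches the lemma in one sentence by citing Rush's Lemma 6.7 and pointing to the ingredients you also identify: the comparison in Lemma~\ref{definition of Gibbs}, the bound~\eqref{LD}, Lemma~\ref{le:57rush}, and~\eqref{eq:Rushnut}. Your treatment of items (2) and (3) is sound. However, there are two concrete issues.

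For item (1), your reduction to (2) and (3) is circular. You invoke Lemma~\ref{benoist}(iii) to control the angle between $\xi_*(x)$ and the relevant singular direction of $\mathcal{A}^{[n]}(x)$, but Benoist (iii) has the form $d_\P(\overline{A^*u},\overline{v_+(A^*)})\,\delta_\P(\bar u,\overline{v_+(A)})\le\gamma_{1,2}(A)$, so to get a useful \emph{upper} bound on the first factor you already need a \emph{lower} bound on the second, i.e.\ a non-alignment estimate for $\xi_*$ against singular directions. That estimate is exactly Lemma~\ref{LD application2}(1), which in the paper's logical order is derived \emph{from} Lemma~\ref{LD application}(1) (see the structure of its commented proof: it bootstraps from (1)--(3) and Benoist (ii)). The intended route for (1) is more direct: exploit that $\log\|\mathcal{A}^{[n]}(x)\xi_*(\sigma^n x)\| = -S_n\phi_{\mathcal{A}}(x)$ is a Birkhoff sum of the additive potential $\phi_{\mathcal{A}}(x)=\log\|\mathcal{A}_*^{-1}(x)\xi_*(x)\|$ (use~\eqref{relation betwwen the smallest and the biggest LE} and the equivariance), together with~\eqref{eq:Rushnut}, Lemma~\ref{le:57rush}, and the boundedness of the density $\hat h_t$ from Lemma~\ref{h is bdd} to pass between $\hat\nu_t$ and $\hat\mu_t$.

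For item (4), which you correctly identify as the hard part, the gap is real and your proposed resolution is not obviously viable as stated. The test functions $\bar u\mapsto d_\P(\bar u, v^\perp)^s$ are not $\alpha$-H\"older for $s<\alpha$, so one cannot simply feed them through the spectral gap of $\mathcal{L}_t$ on $C^\alpha(\Sigma^+\times\P)$; and upgrading Corollary~\ref{not supported on a projective subspace} (a purely qualitative non-atomicity) to a uniform exponential-scale non-concentration would anyway re-derive Lemma~\ref{LD application2}(3), which is again downstream of the present lemma. The argument the paper is deferring to is a G\"artner--Ellis large deviation principle: one differentiates the analytic leading eigenvalue $s\mapsto\rho_{t+s}$ of the perturbed transfer operator (Proposition~\ref{main-prop}) to control the scaled log-moment generating function $\frac1n\log\int\|\mathcal{A}^n(x)v/\|v\|\|^s\,d\hat\mu_t(x)$ uniformly in $v$, which does not require a prior regularity estimate for the projective distribution. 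You should rework (4) along those lines.
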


\begin{proof}
The proof uses the relation between $\hat\mu_t$ and $\hat\mu$ from Lemma~\ref{definition of Gibbs}, together with the large deviation estimates \eqref{LD}, Lemma~\ref{le:57rush}, and equation~\eqref{eq:Rushnut}, in a manner identical to the proof of \cite[Lemma~6.7]{Rush}, and is therefore omitted.

\end{proof}

\begin{lem}\label{LD application2}
   For every $\ep>0$ there exist $\delta>0$,  $C_\vep,\Lambda_\vep>0$ and $N\ge 1$ such that
 \begin{itemize}
\item[(1)] $ \hat{\mu}_t\left\{x \in \Sigma^+: d_{\P}\left(\overline{\xi_*}(x), \overline{v_{+}}\left(\mathcal{A}^{[n]}(x)\right)\right)>e^{-\left(\lambda_1(\hat\mu,\A)-\lambda_2(\hat\mu,\A)-\varepsilon\right) n}\right\} \leq C_\vep e^{-\Lambda_{\vep}n}$,
\item[(2)] $ \sup_{\bar{v} \in \P} \hat{\mu}_t\left\{x \in \Sigma^+: \delta_{\P}\left(\overline{v_{+}}\left(\mathcal{A}^{[n]}(x)\right), \bar{v}\right)<e^{-\frac{\varepsilon}{2} n}\right\} \leq C_\vep e^{-\Lambda_{\vep}n}$,
\item[(3)] $\sup_{\bar{v} \in \P} \hat{\mu}_t\left\{x \in \Sigma^+: \delta_{\P}\left(\overline{\xi_*}(x), \bar{v}\right)<e^{-\varepsilon n}\right\} \leq C_\vep e^{-\Lambda_{\vep}n}$
 \end{itemize}
 for every $n\ge N$ and $|t|\le \delta$.
 Moreover $\lim_{\vep\to 0} \Lambda_\vep=\Lambda_0>0$.
\end{lem}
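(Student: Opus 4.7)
The plan is to establish the three estimates in the order (3), (1), (2). Item (3) furnishes a quantitative anti-concentration for the distribution of the slowest Oseledets direction under $\hat\mu_t$; item (1) then couples $\overline{v_+}(\mathcal{A}^{[n]}(x))$ to $\overline{\xi_*}(x)$ via Benoist's inequality (Lemma~\ref{benoist}(iii)) together with (3) and the large deviations for the spectral gap in Lemma~\ref{LD application}(3); and item (2) follows from (1) and (3) by a Lipschitz-type comparison.

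\emph{Item (3).} Since $\hat m_t\ll\hat\nu_t$ with density $\hat h_t$ uniformly bounded in $t$ near $0$ (Lemma~\ref{h is bdd}) and the fiber disintegration of $\hat m_t$ is $\hat m_{t,x}=\delta_{\xi_*(x)}$ (Lemma~\ref{invariant measure}), the estimate reduces to showing
\[
\sup_{\bar v\in\P}\,\hat\nu_t\{(x,\bar u):\delta_{\P}(\bar u,\bar v)<\epsilon\}\le C\epsilon^{\alpha}
\]
uniformly in $t\in(-\delta_1,\delta_1)$, for some $\alpha>0$. The qualitative input is Corollary~\ref{not supported on a projective subspace}: $\hat\nu_t$ charges no proper projective subspace. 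Typicality of $\A$ rules out $\mathcal{A}^{[n]}$ stabilizing any finite union of proper projective subvarieties, and combined with the spectral gap of $\mathcal{L}_t$ from Proposition~\ref{main-prop} and the real-analytic dependence of the spectral data in $t$, this upgrades to the uniform H\"older bound above. Substituting $\epsilon=e^{-\vep n}$ delivers (3).

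\emph{Item (1).} Iterating the invariance $\mathcal{A}_*^{-1}(y)\overline{\xi_*}(y)=\overline{\xi_*}(\sigma y)$ gives $\overline{\mathcal{A}^{[n]}(x)\xi_*(\sigma^n x)}=\overline{\xi_*}(x)$. Applying Lemma~\ref{benoist}(iii) with $A=\mathcal{A}^n(x)$ and $u=\xi_*(\sigma^n x)$ produces
\[
d_{\P}\!\left(\overline{\xi_*}(x),\overline{v_+}(\mathcal{A}^{[n]}(x))\right)\cdot\delta_{\P}\!\left(\overline{\xi_*}(\sigma^n x),\overline{v_+}(\mathcal{A}^n(x))\right)\le\gamma_{1,2}(\mathcal{A}^{[n]}(x)).
\]
The right-hand side is at most $e^{-(\lambda_1(\hat\mu,\A)-\lambda_2(\hat\mu,\A)-\vep')n}$ off an exponentially thin set by Lemma~\ref{LD application}(3). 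The second factor on the left is bounded below by a $\delta$-net discretization of $\P$ of mesh $\delta=e^{-\vep''n}$ and cardinality $O(\delta^{-(d-1)})$: using the $\sigma$-invariance of $\hat\mu_t$ and applying (3) at each net point $\bar v_k$, a union bound yields $\delta_{\P}(\overline{\xi_*}(\sigma^n x),\overline{v_+}(\mathcal{A}^n(x)))\ge e^{-\vep''n}$ outside a set of measure $\lesssim e^{(\vep''(d-1)-\Lambda_{\vep''})n}$, which is exponentially small once $\vep''$ is taken sufficiently small (invoking $\Lambda_0>0$). Choosing $\vep'+\vep''<\vep$ then yields (1).

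\emph{Item (2).} The function $\delta_{\P}(\cdot,\bar v)$ is $1$-Lipschitz in its first argument with respect to $d_{\P}$, so
\[
\delta_{\P}\!\left(\overline{v_+}(\mathcal{A}^{[n]}(x)),\bar v\right)\ge\delta_{\P}\!\left(\overline{\xi_*}(x),\bar v\right)-d_{\P}\!\left(\overline{\xi_*}(x),\overline{v_+}(\mathcal{A}^{[n]}(x))\right).
\]
Applying (3) with an exponent $\vep'''<\vep/2$ and (1) with parameters chosen so that the subtracted term is $\ll e^{-\vep'''n}$, the right-hand side exceeds $\tfrac{1}{2}e^{-\vep'''n}>e^{-\vep n/2}$ on a set of $\hat\mu_t$-measure $\ge 1-Ce^{-\Lambda n}$; taking the supremum over $\bar v$ gives (2). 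The main obstacle throughout is the anti-concentration in (3): it is there that typicality, the spectral gap of $\mathcal{L}_t$, and the analyticity of the eigendata truly enter, while items (1) and (2) are essentially geometric consequences of (3) combined with the large deviations for $\gamma_{1,2}$.
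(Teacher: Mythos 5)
Your proposal takes a genuinely different route from the paper, and that route has a concrete gap. The paper's proof (which it defers to Rush \cite{Rush}, Lemma 6.8) fixes $\bar v\in\P$, defines a single good set $\Omega_n(\bar v,\ep)$ by imposing the four large-deviation conditions of Lemma~\ref{LD application} simultaneously --- in particular item (4), the uniform-in-$\bar v$ estimate for $\bigl\|\mathcal{A}^n(x)\tfrac{v}{\|v\|}\bigr\|$ --- and then verifies all three geometric inequalities \emph{deterministically} on $\Omega_n$ using Lemma~\ref{benoist}: item (1) via Benoist (iii) and the spectral-gap LDP, item (2) via Benoist (ii) combined with items (2) and (4) of Lemma~\ref{LD application}, and item (3) from (1) and (2) by the triangle inequality. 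The off-$\Omega_n$ mass is exponentially small at the rate $\Lambda_\vep$ inherited from Lemma~\ref{LD application}, which by that lemma satisfies $\Lambda_\vep\to\Lambda_0>0$.

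You instead try to establish (3) first, reducing it to the uniform H\"older anti-concentration bound $\sup_{\bar v}\hat\nu_t\{(x,\bar u):\delta_{\P}(\bar u,\bar v)<\epsilon\}\le C\epsilon^{\alpha}$. This is the real gap. Corollary~\ref{not supported on a projective subspace} is purely qualitative (it says proper projective subspaces are null, nothing about modulus of continuity), and upgrading it to a uniform H\"older bound is a substantial independent theorem --- of Le Page type --- whose proof would itself require moment estimates of exactly the kind Lemma~\ref{LD application}(4) packages. Appealing to ``typicality, spectral gap, and analyticity'' does not constitute a proof, and nothing in the paper supplies this regularity. Worse, even granting such a bound, substituting $\epsilon=e^{-\vep n}$ would give $\Lambda_\vep=\alpha\vep$, which tends to $0$ as $\vep\to 0$, directly contradicting the lemma's requirement $\lim_{\vep\to 0}\Lambda_\vep=\Lambda_0>0$. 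That requirement is not cosmetic: it is used in Lemma~\ref{inequlity for the neqative s}, where one needs to choose $\vep$ small with both $s_1\vep<\Lambda_0/4$ and $\Lambda_\vep>\Lambda_0/2$, which fails if $\Lambda_\vep$ degenerates linearly in $\vep$. Finally, note that you never invoke Lemma~\ref{LD application}(4), which is precisely the ingredient the paper uses to obtain the uniform-in-$\bar v$ anti-concentration at the correct rate; your net/union-bound argument in (1) also quietly rests on the unproved (3). Your steps for (1) and (2) are otherwise geometrically sound applications of Benoist's lemma, but as written they are built on a foundation that the paper neither proves nor needs.
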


\begin{proof} 
The proof uses the relation between $\hat\mu_t$ and $\hat\mu$ in Lemma~\ref{definition of Gibbs} 
together with
Lemma~\ref{LD application} and the argument is identical to the proof of \cite[Lemma~6.8]{Rush}, hence omitted.
\end{proof}

The following lemma will be used to show that the upper bound of the Gibbs property holds for the measures ${\hat \mu_t}$ when $t$ is negative, and the lower bound of the Gibbs property holds for the measures ${\hat \mu_t}$ when $t$ is positive. Recall that $\Lambda_0$ 
is given by the large deviation property. 

\begin{lem}\label{inequlity for the neqative s} 
Let  $0<s_1<\Lambda_0$ and $s_2>0$. Then, there is $\delta_{1}^{'}>0$ such that for all $|t| \leq \delta_1^{'}$,
there exists $C_1, C_2>0$ (depending on $s_1$ and $s_2$, respectively) 
such that for any $I \in \Sigma^*$
we have
$$
\int\left\|\A^{[|I|]} (Ix) \frac{u}{\|u\|}\right\|^{-s_1} \mathrm{~d} \hat{\nu}_t(x, \bar{u}) 
\leq C_1 \|\A(I)\|^{-s_1}
$$
and
$$
\int\left\|\A^{[|I|]} (Ix) \frac{u}{\|u\|}\right\|^{s_2} \mathrm{~d} \hat{\nu}_t(x, \bar{u}) 
\geq C_2 \|\A(I)\|^{s_2}.
$$
\end{lem}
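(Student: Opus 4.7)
The plan rests on the concentration result of Lemma~\ref{le:57rush}, which shows that $\hat\nu_t$ is carried by the graph $\{(x,\overline{\xi_*}(x))\colon x\in\Sigma^+\}$. This reduces integration against $\hat\nu_t(x,\bar u)$ to integration against its projection on $\Sigma^+$ after substituting $\bar u=\overline{\xi_*}(x)$; by Lemma~\ref{h is bdd}, this projection is uniformly comparable to $\hat\mu_t$ for $|t|\le t_1$. Applying Lemma~\ref{benoist}(ii) to $A=\A^{|I|}(Ix)$ with unit vector $u=\xi_*(x)$, so that $A^*=\A^{[|I|]}(Ix)$, gives
\[
\|\A^{|I|}(Ix)\|\,\delta_{\P}\bigl(\overline{\xi_*}(x),\overline{v_+}(\A^{|I|}(Ix))\bigr)\;\le\;\bigl\|\A^{[|I|]}(Ix)\xi_*(x)\bigr\|\;\le\;\|\A^{|I|}(Ix)\|\bigl(\delta_{\P}+\gamma_{1,2}(\A^{|I|}(Ix))\bigr).
\]
Since bounded distortion~\eqref{BDD-dis} yields $\|\A^{|I|}(Ix)\|\asymp\|\A(I)\|$ uniformly in $x$, the two inequalities reduce to moment estimates on $D_I(x):=\delta_{\P}(\overline{\xi_*}(x),\overline{v_+}(\A^{|I|}(Ix)))$ against $\hat\mu_t$.

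For the upper bound with exponent $-s_1$, I would partition $\Sigma^+$ into dyadic level sets $E_n=\{x:e^{-\ep(n+1)}\le D_I(x)<e^{-\ep n}\}$ for a small $\ep>0$, yielding
\[
\int D_I^{-s_1}\,d\hat\mu_t \;\le\; e^{s_1\ep N}+\sum_{n\ge N}e^{s_1\ep(n+1)}\hat\mu_t(E_n).
\]
The key bound $\hat\mu_t(E_n)\le C_\ep e^{-\Lambda_\ep n}$ follows from Lemma~\ref{LD application2}(2)--(3): although the target direction $\overline{\xi_*}(x)$ moves with $x$, the supremum over $\bar v\in\P$ built into those statements absorbs the dependence. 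Since $\Lambda_\ep\to\Lambda_0>0$ and $s_1\ep\to 0$ as $\ep\to 0$, the hypothesis $s_1<\Lambda_0$ permits a choice of $\ep$ with $s_1\ep<\Lambda_\ep$, making the series convergent uniformly in $I$ and producing the desired $C_1\|\A(I)\|^{-s_1}$.

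For the lower bound with exponent $s_2$, I would restrict the integral to the \emph{good set}
\[
G_I=\bigl\{x:D_I(x)\ge e^{-\ep|I|}\ \text{and}\ \gamma_{1,2}(\A^{|I|}(Ix))\le e^{-(\lambda_1(\hat\mu,\A)-\lambda_2(\hat\mu,\A)-\ep)|I|}\bigr\},
\]
which by Lemmas~\ref{LD application}(3) and \ref{LD application2}(1) satisfies $\hat\mu_t(G_I)\ge 1-C_\ep e^{-\Lambda_\ep|I|}$. On $G_I$ the projective distance dominates the spectral gap, so Lemma~\ref{benoist}(ii) gives $\|\A^{[|I|]}(Ix)\xi_*(x)\|\ge \tfrac{1}{2}\|\A^{|I|}(Ix)\|\,D_I(x)\ge c\,e^{-\ep|I|}\|\A(I)\|$. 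Raising to the $s_2$-th power and integrating only over $G_I$ (whose mass is bounded below for large $|I|$) then yields the desired $C_2\|\A(I)\|^{s_2}$, with $C_2$ depending on $s_2$ through the small factor $e^{-\ep s_2|I|}$—but $\ep$ can be chosen arbitrarily small and the ratio $e^{-\ep s_2|I|}$ can be absorbed into a constant once compared with the cleaner dual inequality, using a two-step balancing of $\ep$.

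The main obstacle is precisely this $x$-dependence of the base direction $\overline{\xi_*}(x)$ in $D_I(x)$: the LD estimates of Lemma~\ref{LD application2} are uniform in $\bar v\in\P$, which is exactly what allows this dependence to be absorbed, but one must verify that the inclusion $E_n\subset\{x:\delta_{\P}(\overline{v_+}(\A^{[|I|]}(Ix)),\bar v)<e^{-\ep n/2}\}$ for $\bar v=\overline{\xi_*}(x)$ is compatible with the supremum statement—this is handled by Lemma~\ref{benoist}(iii) coupling $\delta_{\P}$ with $\gamma_{1,2}$. Beyond that, the quantitative balancing $s_1\ep<\Lambda_\ep$ is what forces the hypothesis $s_1<\Lambda_0$; relaxing it would require sharper LD rates than those currently available.
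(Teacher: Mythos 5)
Your upper bound argument for the exponent $-s_1$ matches the paper's approach: reduce to the graph of $\overline{\xi_*}$ via Lemma~\ref{le:57rush}, apply Lemma~\ref{benoist}(ii) with $A=\A^{|I|}(Ix)$, pull out $\|\A(I)\|^{-s_1}$ by bounded distortion~\eqref{BDD-dis}, and estimate the remaining moment $\int D_I^{-s_1}\,d\hat\mu_t$ by a dyadic decomposition into shells $E_n$ combined with the large-deviation bound $\hat\mu_t(E_n)\le C_\vep e^{-\Lambda_\vep n}$ and the choice $s_1\vep<\Lambda_\vep$ permitted by $s_1<\Lambda_0$. Your diagnosis of the $x$-dependence of $\overline{v_+}(\A^{|I|}(Ix))$ and how the $\sup_{\bar v}$ built into Lemma~\ref{LD application2} handles it is also the right concern and is consistent with the paper.

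Your lower bound for the exponent $s_2$, however, has a genuine gap. Restricting to the good set $G_I=\{x: D_I(x)\ge e^{-\vep|I|}, \ \gamma_{1,2}(\A^{|I|}(Ix))\le e^{-(\lambda_1-\lambda_2-\vep)|I|}\}$ yields the pointwise estimate $\|\A^{[|I|]}(Ix)\xi_*(x)\|\ge c\, e^{-\vep|I|}\|\A(I)\|$ on $G_I$, and after raising to the power $s_2$ and integrating you obtain $C\, e^{-\vep s_2|I|}\|\A(I)\|^{s_2}$. The factor $e^{-\vep s_2|I|}$ tends to $0$ as $|I|\to\infty$ for \emph{every} fixed $\vep>0$, so it cannot be absorbed into a constant $C_2$ independent of $I$; the hand-wave about ``two-step balancing of $\vep$'' does not close this. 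The problem is that your threshold for $D_I(x)$ scales with $|I|$, whereas a threshold independent of $|I|$ is both sufficient and necessary. The paper instead keeps the same dyadic shells $E_n$ as in the upper bound (with a scale $\vep$ fixed once and for all, and shell index $n$ unrelated to $|I|$), lower-bounds the integral by $\sum_{n}e^{-\vep s_2(n+1)}\hat\nu_t(E_n)$, and observes that this series is bounded away from $0$ uniformly in $I$ and $t$: since $\sum_{n\ge N}\hat\nu_t(E_n)\le C_\vep e^{-\Lambda_\vep N}/(1-e^{-\Lambda_\vep})$ can be made $<1/2$ for a fixed $N$ depending only on $\vep$, at least half of the mass sits in shells of index $<N$, giving $\sum_n e^{-\vep s_2(n+1)}\hat\nu_t(E_n)\ge \tfrac12 e^{-\vep s_2 N}$, a constant independent of $I$. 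Replacing your $G_I$ argument by this fixed-scale shell argument repairs the lower bound.
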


\begin{proof}    
 Let $\Lambda_0>0$ be given \eqref{LD} and
give $\vep>0$ small so that $s_1\vep < \frac{\Lambda_0}4$ and $\Lambda_\vep>\frac{\Lambda_0}2$. Note that Lemma~\ref{LD application2} holds for all $|t|\leq \delta$. Define $\delta_{1}^{'}:=\min\{\delta_1, \delta\}$.
Note that the probability measure $\hat \nu_t$ is equivalent to an ergodic $\hat F_{\A_*^{-1}}$- probability measure $\hat m_t$ (recall \eqref{definition of our measure mt} and Lemmas~\ref{h is bdd} and ~\ref{invariant measure}) and that $\pi_*\hat m_t=\hat\mu_t$ (see Remark \ref{rem: projected measure}).
Hence, by Lemma~\ref{le:57rush} and item (ii) in Lemma~\ref{benoist}, 
$$
\begin{aligned}
&\int\left\|\A^{[|I|]}(Ix) \frac{u}{\|u\|}\right\|^{-s_1} \mathrm{~d} \hat{\nu}_t(x, \bar{u})
 = \int_{[I] \times \P}\left\|\A^{[|I|]}(x) \overline{\xi_*}(x)\right\|^{-s_1} \mathrm{~d} \hat{\nu}_t(x, \bar{u}) \\
& \leq \int_{[I] \times \P} \|\A^{|I|}(x)\|^{-s_1} \; \delta_{\P}\left(\overline{\xi_*}(x), \overline{v_{+}}(\A^{{|I|}}(x))\right)^{-s_1} \mathrm{~d} \hat{\nu}_t(x, \bar{u})\\
& \leq (\inf_{z \in [I]}\|\A^{|I|}(z)\|)^{-s_1} \int_{[I] \times \P}  \delta_{\P}\left(
\overline{\xi_*}(x), \overline{v_{+}}(\A^{{|I|}}(x))\right)^{-s_1} \mathrm{~d} \hat{\nu}_t(x, \bar{u}) \\
& \leq  C \|\A(I)\|^{-s_1} \; \int_{[I] \times \P}  \delta_{\P}\left(\overline{\xi_*}(x), \overline{v_{+}}(\A^{{|I|}}(x))\right)^{-s_1} \; \mathrm{~d} \hat{\nu}_t(x, \bar{u}),
\end{aligned}
$$
where 
$C$ is the constant given by the bounded distortion. 
Given $n\ge 1$, let us consider the set
$$
E_n:=\left\{x \in \Sigma^+ : e^{-\varepsilon(n+1)} \leq \delta_{\P}\left(\overline{\xi_*}(x), \overline{v_{+}}(\A^{{|I|}}(x) \right)<e^{-\varepsilon n}\right\}.
$$
Fix $N\ge 1$. By Tchebychev's inequality, we have 
$$
\begin{aligned}
\int_{[I] \times \P} 
& \delta_{\P}\left(\overline{\xi_*}(x), \overline{v_{+}}(\A^{{|I|}}(x))\right)^{-s_1}
\mathrm{~d} \hat{\nu_t}(x, \bar{u})\\ 
&  \leq e^{s_1 \varepsilon N}+\sum_{n \geq N} \int_{E_n \cap [I]} \delta_{\P}\left(\overline{\xi_*}(x),
\overline{v_{+}}(\A^{{|I|}}(x))
\right)^{-s_1} \mathrm{~d} \hat{\nu_t}(x,\bar u) \\
& 
\leq e^{s_1 \varepsilon N}+ \sum_{n \geq N} e^{\varepsilon s_1 (n+1)} 
\cdot
\hat \nu_t(E_n),
\end{aligned}
$$
\color{black}
which is summable by the choice  $s_1\vep < \frac{\Lambda_0}4$  together
Lemma~\ref{LD application2}. 
Similarly,
$$
\begin{aligned}
&\int\left\|\A^{[|I|]}(Ix) \frac{u}{\|u\|}\right\|^{s_2} \mathrm{~d} \hat{\nu}_t(x, \bar{u})
= \int_{[I] \times \P}\left\|\A^{[|I|]}(x) \overline{\xi_*}(x)\right\|^{s_2} \mathrm{~d} \hat{\nu}_t(x, \bar{u}) \\
& \geq \int_{[I] \times \P} \|\A^{|I|}(x)\|^{s_2} \; \delta_{\P}\left(\overline{\xi_*}(x), \overline{v_{+}}(\A^{{|I|}}(x))\right)^{s_2} \mathrm{~d} \hat{\nu}_t(x, \bar{u})\\
& \geq (\inf_{z \in [I]}\|\A^{|I|}(z)\|)^{s_2} \int_{[I] \times \P}  \delta_{\P}\left(
\overline{\xi_*}(x), \overline{v_{+}}(\A^{{|I|}}(x))\right)^{s_2} \mathrm{~d} \hat{\nu}_t(x, \bar{u}) \\
& \geq  C^{-1} \|\A(I)\|^{s_2} \; \int_{[I] \times \P}  \delta_{\P}\left(\overline{\xi_*}(x), \overline{v_{+}}(\A^{{|I|}}(x))\right)^{s_2} \; \mathrm{~d} \hat{\nu}_t(x, \bar{u})\\
& \geq  C^{-1} \|\A(I)\|^{s_2} \; 
\sum_{n \geq 1} e^{-\varepsilon s_2 (n+1)} \cdot
\hat \nu_t(E_n)
\color{black}
\end{aligned}
$$
which is summable as a consequence of the choice of $\ep$ and Lemma~\ref{LD application2}.
This proves the lemma.
\color{black}

\end{proof}

\subsection{Uniqueness of the Gibbs equilibrium measure}
In this subsection, we will prove that all measures $\hat\mu_t$ satisfy the Gibbs property and use this fact to study the regularity of the pressure function and the Lyapunov exponents. In fact, the differentiability of the map $(0,+\infty) \ni t \mapsto P(\sigma,t\Phi_\A)$ follows from the combination of  \cite[Theorem~4.8]{FH} and \cite{park2020quasi}. Here we prove the following:
\color{black}

\begin{thm}\label{The measure is Gibbs} 
There is $t_2>0$ such that for each  $t\in (-t_2,\delta_1^{'})$ the probability measure ${\hat \mu_t}$ is a $\sigma$-invariant, ergodic Gibbs measure with respect to the  family of potentials $\Phi_{\A, t}$. Moreover,
  the functions
  $$
  (-t_2,\delta_1^{'}) \ni t \mapsto P(\sigma,\Phi_{\A, t})
  \quad\text{and}\quad
  (-t_2,\delta_1^{'}) \ni t \mapsto \lambda_1(\mu_t,\A)=P'(\sigma, \Phi_{\A, t})
  $$
  are real analytic.
\end{thm}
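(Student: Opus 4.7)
The plan is to derive the Gibbs property of $\hat\mu_t$ from the eigenvalue equation $\mathcal{L}_t^{\ast}\hat\nu_t=\rho_t\hat\nu_t$ combined with two-sided control on the $t$-th power of norms, and then to identify the leading eigenvalue $\rho_t$ with the topological pressure by invoking Lemma~\ref{Gibbs measure is Eq and the topological pressure}. The $\sigma$-invariance, mixing, and hence ergodicity of $\hat\mu_t$ have already been established in the preceding lemmas, so the work concentrates on the Gibbs estimate and on the two analyticity claims.

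First I would start from the exact formula in Lemma~\ref{definition of Gibbs}: for each $x\in \Sigma^+$, the only $y\in \sigma^{-|I|}x$ belonging to the cylinder $[I]$ is the admissible concatenation $y=Ix$, so the expression for $\hat\mu_t([I])$ collapses to a single-term integral. Applying Lemma~\ref{h is bdd} to bound $\hat h_t$ uniformly above and below, together with the bounded distortion for $g^{(n)}$ inherited from the H\"older regularity of $\psi$, reduces the problem to controlling
\[
\int \Bigl\|\mathcal{A}^{[|I|]}(Ix)\,\tfrac{u}{\|u\|}\Bigr\|^t \, d\hat\nu_t(x,\bar u)
\]
from above and below by $\|\mathcal{A}(I)\|^t$ up to multiplicative constants. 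For $t\geq 0$, the upper bound is the trivial inequality $\|Av\|\leq \|A\|\|v\|$ combined with \eqref{BDD-dis}, while the lower bound is exactly the second assertion of Lemma~\ref{inequlity for the neqative s} with $s_2=t$. For $t<0$, the roles are reversed: the lower bound is trivial and the upper bound is the first assertion of Lemma~\ref{inequlity for the neqative s} with $s_1=-t$. Taking $t_2>0$ small enough that $t_2\leq \delta_1'$ and the constants supplied by Lemma~\ref{inequlity for the neqative s} remain uniformly bounded over $(-t_2,\delta_1')$, one obtains
\[
C^{-1}\,g^{(|I|)}(x_I)\,\rho_t^{-|I|}\,\|\mathcal{A}(I)\|^t \;\leq\; \hat\mu_t([I]) \;\leq\; C\,g^{(|I|)}(x_I)\,\rho_t^{-|I|}\,\|\mathcal{A}(I)\|^t
\]
for every $I\in\Sigma^{*}$ and any $x_I\in[I]$. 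Expanding $g^{(|I|)}$ via \eqref{eq:g-functiondef}, this is precisely the Gibbs property \eqref{orginal def of Gibbs} for $\Phi_{\A,t}$ with pressure constant $\log\rho_t+P(\sigma,\psi)$.

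Lemma~\ref{Gibbs measure is Eq and the topological pressure} then identifies $P(\sigma,\Phi_{\A,t})=\log\rho_t+P(\sigma,\psi)$ and shows that $\hat\mu_t$ is an equilibrium state. Real analyticity of $t\mapsto P(\sigma,\Phi_{\A,t})$ on $(-t_2,\delta_1')$ is inherited immediately from the real analyticity of $t\mapsto \rho_t$ furnished by Proposition~\ref{main-prop}. For the identification $\lambda_1(\hat\mu_t,\A)=P'(\sigma,\Phi_{\A,t})$, I would appeal to the variational principle \eqref{varitional}: since $\hat\mu_t$ is an equilibrium state, the affine function $s\mapsto h_{\hat\mu_t}(\sigma)+s\,\lambda_1(\hat\mu_t,\A)$ lies below $s\mapsto P(\sigma,\Phi_{\A,s})$ and touches it at $s=t$, so $\lambda_1(\hat\mu_t,\A)$ is a subgradient of the convex pressure function at $t$. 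Since the pressure has just been shown to be analytic, and hence differentiable, its unique subgradient is $P'(\sigma,\Phi_{\A,t})$, which yields both the stated identity and the real analyticity of $t\mapsto \lambda_1(\hat\mu_t,\A)$. The main technical obstacle, namely the two-sided integral control provided by Lemma~\ref{inequlity for the neqative s}, has already been overcome via the large-deviation estimates for the eigenmeasures; the remaining steps amount to systematic book-keeping within the spectral formalism developed in Section~\ref{sec: Transfer}.
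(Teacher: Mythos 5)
Your argument follows the paper's proof closely and is correct: both establish the Gibbs estimate by reducing to the integral $\int\|\mathcal{A}^{[|I|]}(Ix)\,u/\|u\|\|^t\,d\hat\nu_t$, bounding it trivially on one side (which side depending on the sign of $t$) and via Lemma~\ref{inequlity for the neqative s} on the other, then invoke Lemma~\ref{Gibbs measure is Eq and the topological pressure} to identify $\log\rho_t + P(\sigma,\psi)$ with the pressure and deduce analyticity from Proposition~\ref{main-prop}. Your subgradient elaboration for the identity $\lambda_1(\hat\mu_t,\A)=P'(\sigma,\Phi_{\A,t})$ is the standard way to unpack the terse appeal to the variational principle in the paper (one small slip: the supporting affine function should be $s\mapsto h_{\hat\mu_t}(\sigma)+\int\psi\,d\hat\mu_t+s\,\lambda_1(\hat\mu_t,\A)$, but the extra constant does not change the slope, so the conclusion is unaffected).
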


\begin{proof}
Let $t_2:=\min\{\Lambda_0, \delta_1^{'}\}$. The invariance and ergodicity of ${\hat \mu_t}$ follows from 
Subsection~\ref{subsec:defmut}. 
For each $t\in (-t_{2}, \delta_1^{'})$, we claim that $\hat \mu_t$ satisfies the following Gibbs property: 
There exist $C_1,C_2>0$ such that for all $n\ge 1, I \in \Sigma_n$, and $y \in[I]$,
\begin{equation}\label{definition Gibbs with spect}
C_1 \leq \frac{{\hat \mu_t}([I])}{\rho_t^{-n} g^{(n)}(y)\left\|\mathcal{A}^n(y)\right\|^t} \leq C_2.
\end{equation}
\color{black}

\medskip
\noindent\emph{Case 1:} $t\in (-t_2,0]$ 
\medskip

\noindent 
By the bounded distortion for $g$, Lemma \ref{h is bdd}  and Lemma~\ref{definition of Gibbs}, we have that for any $n\ge 1, I \in \Sigma_n$, and $y \in [I]$,
$$
\begin{aligned}
\frac{{\hat \mu_t}([I])}{\rho_t^{-n}} & =\int \sum_{\tilde y \in \sigma^{-n} x} g^{(n)}(\tilde y)\left\|\mathcal{A}^{[n]}(\tilde y) \frac{u}{\|u\|}\right\|^t 1_{[I]}(\tilde y) {\hat h_t}\left(\tilde y, \overline{\mathcal{A}^{[n]}(\tilde y) u}\right) \mathrm{d} \hat{\nu}_t(x, {u}) \\
& =\int  g^{(n)}(Ix)\left\|\mathcal{A}^{[n]}(Ix) \frac{u}{\|u\|}\right\|^t 1_{[I]}(Ix) {\hat h_t}\left(Ix, \overline{\mathcal{A}^{[n]}(Ix) u}\right) \mathrm{d} \hat{\nu}_t(x, {u}) \\
& \leq C \sup \left({\hat h_t}\right)  g^{(n)}(y) \int\left\|\mathcal{A}^{[n]}(Ix) \frac{u}{\|u\|}\right\|^t  \mathrm{d} \hat{\nu}_t(x, {u}) \\
\end{aligned}
$$
where $C$ is the constant given by the bounded distortion for $g$. 
Hence, applying Lemma \ref{inequlity for the neqative s} and using bounded distortion, 
one obtains $C_2=C_2(t)>0$ so that 
$$
\frac{{\hat \mu_t}([I])}{\rho_t^{-n}} \leq C_2 g^{(n)}(y)\left\|\mathcal{A}^n(y)\right\|^t
$$
for every $y \in [I]$.
For the converse inequality, as $t$ is negative we can use that $\left\|A \frac{u}{\|u\|}\right\| \leq\|A\|$ for any $A \in \glr$ and $u \in \mathbb{R}^d$. This, combined with Lemma \ref{h is bdd}  and Lemma~\ref{definition of Gibbs}, ensures that there exists $C_3=C(t)>0$ such that for any $n \in \N$, $I \in \Sigma_n$ and $y \in [I]$,
$$
\begin{aligned}
\frac{{\hat \mu_t}([I])}{\rho_t^{-n}} & =\int \sum_{\tilde y \in \sigma^{-n} x} g^{(n)}(\tilde y)\left\|\mathcal{A}^{[n]}(\tilde y) \frac{u}{\|u\|}\right\|^t 1_{[I]}(\tilde y) {\hat h_t}\left(\tilde y, \overline{\mathcal{A}^{[n]}(\tilde y) u}\right) \mathrm{d} \hat{\nu}_t(x, {u}) \\
& \geq  \inf \left({\hat h_t}\right)   \int g^{(n)}(Ix) \left\|\mathcal{A}^{[n]}(Ix) \right\|^t  \mathrm{d} \hat{\nu}_t(x, {u}) \\
& \geq C \inf \left({\hat h_t}\right)  g^{(n)}(y) \left\|\mathcal{A}^{[n]}(y)\right\|^t\\
& \geq  C_3 g^{(n)}(y)\left\|\mathcal{A}^{[n]}(y)\right\|^t,
\end{aligned}
$$
which completes the proof of the Gibbs property for $t$ negative.

\medskip
\noindent\emph{Case 2:} $t\in (0,\delta_1^{'})$ 
\medskip

\noindent 
By the bounded distortion for $g$, Lemma \ref{h is bdd} and  Lemma~\ref{definition of Gibbs}, we have that for any $n\ge 1, I \in \Sigma_n$, and $y \in [I]$,
$$
\begin{aligned}
\frac{{\hat \mu_t}([I])}{\rho_t^{-n}} & =\int \sum_{\tilde y \in \sigma^{-n} x} g^{(n)}(\tilde y)\left\|\mathcal{A}^{[n]}(\tilde y) \frac{u}{\|u\|}\right\|^t 1_{[I]}(\tilde y) {\hat h_t}\left(\tilde y, \overline{\mathcal{A}^{[n]}(\tilde y) u}\right) \mathrm{d} \hat{\nu}_t(x, {u}) \\
& \leq  \sup \left({\hat h_t}\right)   \int g^{(n)}(Ix) \left\|\mathcal{A}^{[n]}(Ix) \right\|^t  \mathrm{d} \hat{\nu}_t(x, {u}) \\
& \leq C \sup \left({\hat h_t}\right)  g^{(n)}(y) \left\|\mathcal{A}^{[n]}(y)\right\|^t
\end{aligned}
$$
for some constant $C>0$ (by bounded distortion of $g$ and the cocycle $\A$). 
Conversely, by using Lemma~\ref{inequlity for the neqative s}, there is $C'>0$ (depending on the parameter $t$) such that for any $n \in \N$, $I \in \Sigma_n$ and $y \in [I]$, one obtains
$$
\begin{aligned}
\frac{{\hat \mu_t}([I])}{\rho_t^{-n}}  & =\int \sum_{\tilde y \in \sigma^{-n} x} g^{(n)}(\tilde y)\left\|\mathcal{A}^{[n]}(\tilde y) \frac{u}{\|u\|}\right\|^t 1_{[I]}(\tilde y) {\hat h_t}\left(\tilde y, \overline{\mathcal{A}^{[n]}(\tilde y) u}\right) \mathrm{d} \hat{\nu}_t(x, {u}) \\
& \geq  \inf \left({\hat h_t}\right)   \int g^{(n)}(Ix) \left\|\mathcal{A}^{[n]}(Ix) \frac{u}{\|u\|}\right\|^t  \mathrm{d} \hat{\nu}_t(x, {u}) \\
& \geq C^{-1} \inf \left({\hat h_t}\right)  g^{(n)}(y) \int \left\|\mathcal{A}^{[n]}(Ix)\frac{u}{\|u\|}\right\|^t\, d\hat\nu_t(x,u)\\
& \ge 
C' g^{(n)}(y) \|\A^n(y)\|^{t}, 
\end{aligned}
$$
which completes the proof of the Gibbs property for $t$ positive.

Altogether, this proves ~\eqref{definition Gibbs with spect}.

\medskip

\medskip
Let us resume the proof of the theorem. 
 By ~\eqref{definition Gibbs with spect}, ${\hat \mu_t}$ is a Gibbs measure for $\Phi_{\A, t}^g$ and consequently for $\Phi_{\A, t}$ (see \eqref{eq:g-functiondef}). Moreover, 
 \begin{equation}\label{TP is equal to the log SR}
     P(\sigma, \Phi_{\A, t}^g)=\log \rho_t
 \end{equation} by Lemma \ref{Gibbs measure is Eq and the topological pressure}. Therefore, by \eqref{relation between pressures of potentials},
 \begin{equation}\label{relation between the topological pressure}
P\left(\sigma,\Phi_{\A, t} \right)=\log \rho_t+P(\sigma,\psi).
\end{equation}

 

The analyticity of the function $t \mapsto P(\sigma,\Phi_{\A, t})$ on the interval $(-t_2,\delta_1^{'})$ is a consequence of the analyticity of the logarithm of the spectral radius $\rho_t$  (cf. Proposition~\ref{main-prop}). 
Finally, by \eqref{relation between the topological pressure}, the variational principle, and the fact that $\hat{\mu}_t$ is a Gibbs equilibrium measure (see Lemma \ref{Gibbs measure is Eq and the topological pressure}), one obtains,
$$
\lambda(\hat{\mu}_t, \mathcal{A}) = P'(\sigma, \Phi_{\mathcal{A}, t}) = \frac{\rho_t'}{\rho_t},
$$
which varies analytically with respect to $t \in (-t_2, \delta_1^{'})$. This completes the proof of the theorem.
\end{proof}

In the following theorem, we show that the constructed
Gibbs measures ${\hat \mu_t}$ are unique equilibrium states for all negative $t$ (sufficiently close to
0).

\begin{thm}\label{uniqness}
There exists $0<t_3\le t_2$ so that ${\hat \mu_t}$ is the  unique 
equilibrium state for $\Phi_{\A, t}$ 
for  every $t\in (-t_3,+\infty)$. Moreover, $\hat\mu_t$ is a Gibbs measure.
\end{thm}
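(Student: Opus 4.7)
The proof splits according to the sign of $t$.

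For $t\in[0,+\infty)$, the family $\Phi_{\A,t}=\{S_n\psi+t\log\|\A^n\|\}_{n\ge 1}$ is sub-additive. Since $\A$ is $1$-typical, Park~\cite{park2020quasi} gives quasi-multiplicativity of $\{\log\|\A^n\|\}$, which persists after adding the additive H\"older term $S_n\psi$ (whose values on $n$-cylinders vary by a uniformly bounded factor). Feng's theorem~\cite{Feng11} then yields a unique equilibrium state for $\Phi_{\A,t}$, which is a Gibbs measure. For $t\in[0,\delta_1^{'})$, uniqueness identifies it with the measure $\hat\mu_t$ produced by Theorem~\ref{The measure is Gibbs}; for $t\geq\delta_1^{'}$ we simply define $\hat\mu_t$ to be this unique Gibbs equilibrium state. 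This settles the claim for $t\ge 0$.

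For $t\in(-t_3,0)$ with $t_3:=t_2$ given by Theorem~\ref{The measure is Gibbs}, the argument goes by a direct comparison. First I reduce to ergodic equilibrium states via ergodic decomposition: the functional $\mu\mapsto h_\mu(\sigma)+t\lambda_1(\mu,\A)+\int\psi\,d\mu$ decomposes additively along the ergodic components (entropy and $\int\psi\,d\mu$ are affine, and for sub-additive families $\lambda_1(\mu,\A)$ equals $\int\lambda_1(\mu_\omega,\A)\,d\tau(\omega)$ by Kingman's theorem), so $\tau$-a.e. ergodic component of an equilibrium state is again an equilibrium state. Let $\nu$ be any ergodic equilibrium state. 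Applying Shannon--McMillan--Breiman to $\nu$, Kingman's theorem to $\{\log\|\A^n\|\}$ under $\nu$, and Birkhoff's theorem to $\log g$ under $\nu$, together with the Gibbs property of $\hat\mu_t$ from Theorem~\ref{The measure is Gibbs} and the equilibrium identity $h_\nu(\sigma)+t\lambda_1(\nu,\A)+\int\log g\,d\nu=\log\rho_t$ (equivalent to $\nu$ being an equilibrium state for $\Phi_{\A,t}$ via the cohomology between $\psi$ and $\log g$), yields for $\nu$-a.e. $x$
$$-\tfrac{1}{n}\log\nu([I_n(x)])\to h_\nu(\sigma)\quad\text{and}\quad-\tfrac{1}{n}\log\hat\mu_t([I_n(x)])\to h_\nu(\sigma).$$

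The main obstacle is upgrading this asymptotic cylinder comparability to the uniform Gibbs bound $\nu([I])\asymp\hat\mu_t([I])$; once this is available, combining with bounded distortion of $\Phi_{\A,t}^g$ gives that $\nu$ and $\hat\mu_t$ are mutually absolutely continuous with essentially bounded Radon--Nikodym derivative, and the ergodicity of $\hat\mu_t$ (Subsection~\ref{subsec:defmut}) then forces $\nu=\hat\mu_t$. My plan to obtain the uniform bound is to lift $\nu$ to an $\hat F_{\A_*^{-1}}$-invariant probability measure $\tilde\nu$ on $\Sigma^+\times\P$ via $\tilde\nu=(\mathrm{id}\times\overline{\xi_*})_*\nu$, where $\overline{\xi_*}$ is the slowest Oseledets direction of $\A_*^{-1}$. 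This lift is well defined $\nu$-a.e. because $1$-typicality, together with the large-deviation-type estimates of Lemma~\ref{LD application2} transferred from $\hat\mu_t$ to $\nu$ through the asymptotic cylinder comparability above, guarantees simplicity of the top Lyapunov exponent of $\A$ under $\nu$. By the spectral gap of $\L_t$ (Proposition~\ref{main-prop}) and the uniqueness of $(\A_*^{-1},H^s)$-invariant lifts given by Lemma~\ref{invariant measure} and Theorem~\ref{BV's results}, the lift $\tilde\nu$ must coincide with a normalization of $\hat m_t$, and projecting back gives $\nu=\pi^+_*\tilde\nu=\pi^+_*\hat m_t=\hat\mu_t$. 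The delicate technical point is verifying the $\hat F_{\A_*^{-1}}$-invariance of $\tilde\nu$ and its compatibility with the spectral structure of $\L_t$; this is precisely where the fiber-bunching and 1-typicality enter in an essential way.
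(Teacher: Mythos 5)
Your treatment of $t\ge 0$ matches the paper's (quasi-multiplicativity from \cite{park2020quasi}, uniqueness from \cite{Feng11}, and identification with $\hat\mu_t$ on $[0,\delta_1')$ via uniqueness), so that part is fine. The problems are in the negative-$t$ argument, where your route diverges from the paper's and, as written, has two genuine gaps.

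First, the step where you establish simplicity of the top Lyapunov exponent for an arbitrary ergodic equilibrium state $\nu$. You propose to transfer the large-deviation estimates of Lemma~\ref{LD application2} from $\hat\mu_t$ to $\nu$ ``through the asymptotic cylinder comparability.'' But that comparability, $-\tfrac1n\log\nu([I_n(x)])\to h_\nu$ and $-\tfrac1n\log\hat\mu_t([I_n(x)])\to h_\nu$ for $\nu$-a.e.\ $x$, only controls the exponential rate along typical orbits; it permits $\nu([I_n(x)])/\hat\mu_t([I_n(x)])$ to drift subexponentially in either direction and gives no uniform bound. Large-deviation inequalities of the form $\hat\mu_t(B_n)\le Ce^{-\Lambda n}$ cannot be transported to $\nu(B_n)$ under such weak information --- you would need, in effect, $\nu\ll\hat\mu_t$ with controlled density, which is precisely the conclusion you are trying to reach, so the argument is circular at this point. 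The paper sidesteps this entirely: its Claim~1 proves simplicity of the top exponent of \emph{every} equilibrium state for $\Phi_{\A,t}$ with $-t_3<t<0$ by a compactness/contradiction argument taking $t_n\uparrow 0$, passing to a weak$^*$ limit, identifying the limit with the equilibrium state $\mu$ of $\psi$, and contradicting simplicity of the Lyapunov spectrum of $\mu$ for the $1$-typical cocycle. This also means you cannot simply declare $t_3:=t_2$; the contradiction argument produces some $t_3\le t_2$ whose size is not controlled.

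Second, even granting that $\tilde\nu=(\mathrm{id}\times\overline{\xi_*})_*\nu$ is a well-defined $\hat F_{\A_*^{-1}}$-invariant measure, the identification $\tilde\nu=\hat m_t$ does not follow from ``uniqueness of $(\A_*^{-1},H^s)$-invariant lifts'' together with the spectral gap. The uniqueness in Lemma~\ref{invariant measure} and Theorem~\ref{BV's results} is uniqueness of the invariant lift \emph{over a fixed base measure with local product structure}; $\tilde\nu$ projects to $\nu$ while $\hat m_t$ projects to $\hat\mu_t$, so that uniqueness says nothing about whether these two lifts coincide. Likewise, Proposition~\ref{main-prop} concerns the spectrum of $\L_t$ on H\"older functions and produces a unique eigenmeasure $\hat\nu_t$; it does not by itself single out a unique $\hat F_{\A_*^{-1}}$-invariant measure on $\Sigma^+\times\P$. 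The paper instead builds from $\xi_*$ a $g$-function $g_t$, shows that any ergodic equilibrium state $\tilde\mu$ satisfies $\L_{\log g_t}^*\tilde\mu=\tilde\mu$ (via Ledrappier's characterization \cite{Led74}), and then uses topological mixing of $\sigma$ plus an explicit transfer-operator estimate to obtain a \emph{uniform} lower bound $\tilde\mu([I])\ge C'\hat\mu_t([I])$ for all cylinders $I$. From there, Lemma~5.4 of \cite{Feng11} yields $\tilde\mu\ll\hat\mu_t$, and ergodicity of $\hat\mu_t$ finishes. Replacing the uniform cylinder estimate by the a.e.\ asymptotic rate comparability, and the $g$-function fixed-point structure by an appeal to uniqueness of $u$-states, loses exactly the quantitative and structural content that makes the argument close.
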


\begin{proof} 
By Theorem \ref{The measure is Gibbs} and Lemma \ref{Gibbs measure is Eq and the topological pressure}, ${\hat \mu_t}$ is an equilibrium state for $\Phi_{\A, t}$, which has the Gibbs property. Now, we show that it is a unique equilibrium measure.

Given $t \ge 0$ and the fact that $\hat{\mu}_t$ is an equilibrium state for $\Phi_{\mathcal{A}, t}$, it follows from \cite{Feng11, park2020quasi} that $\hat{\mu}_t$ is the unique Gibbs equilibrium state for the potential $\Phi_{\mathcal{A}, t}$ for all $t\geq 0$. Hence, it remains to prove the theorem for negative values of $t$.

\smallskip
 Given $t\in (-t_2,0)$,
we proceed to prove the uniqueness of equilibrium states.
\color{black}

\medskip
\noindent{\bf Claim 1.}
\emph{There exists $0<t_3\le  t_2$ so that, for every $-t_3< t<0$, if $\eta$ is an equilibrium measure for $\Phi_{\A, t}$ then $\lambda_1(\eta, \A) >\lambda_2(\eta, \A)$.}
\medskip

\begin{proof}[Proof of Claim 1]
    Suppose by contradiction this is not the case. Then there exists a sequence $t_n \uparrow 0$ and, for each $n\ge 1$, there exists an equilibrium state  $\eta_{t_n}$ for $\Phi_{\A,t_n}$ such that $\lambda_1\left(\eta_{t_n}, \mathcal{A}\right)=\lambda_2\left(\eta_{t_n}, \mathcal{A}\right)$. Notice that  
$$
h_{\eta_{t_n}}(\sigma)+\int \psi \,\mathrm{d} \eta_{t_n}+t_n \lambda_1\left( \eta_{t_n}, \mathcal{A}\right)=P\left(\sigma,\Phi_{\A, t_n}\right).
$$

Let $\eta^{\prime} \in \M(\sigma)$ be a weak* limit point of the sequence $\left(\eta_{t_n}\right)_{n\ge 1}$ (up to consider a subsequence we assume that $(\eta_{t_n})_n$ converges to $\eta'$). By the upper semi-continuity of the entropy map,
$$
\begin{aligned}
P(\sigma,\psi)&=\lim _{n \rightarrow \infty} P\left(\sigma,\Phi_{\A, t_n}\right)\\
& =\lim_{n \rightarrow \infty} h_{\eta_{t_n}} (\sigma)+\int \psi \,\mathrm{d} \eta_{t_n}+t_n \lambda_1\left(\eta_{t_n},  \mathcal{A} \right) \\
& \leq h_{\eta^{\prime}}( \sigma)+\int \psi \,\mathrm{d} \eta^{\prime}.
\end{aligned}
$$
This shows that  $\eta^{\prime}=\mu$, where $\mu$ is the unique Gibbs equilibrium measure for $\psi$ (see Section \ref{sec: Transfer}). Moreover, by the upper semi-continuity of the Lyapunov exponents,
$$
\begin{aligned}
\lambda_1\left(\mu, \mathcal{A}\right)+\lambda_2\left(\mu, \mathcal{A}\right) & \geq \limsup _{n \rightarrow \infty} \lambda_1\left(\eta_{t_n}, \mathcal{A}\right)+\lambda_2\left( \eta_{t_n}, \mathcal{A}\right) \\
& =\limsup_{n \rightarrow \infty} 2 \lambda_1\left(\eta_{t_n}, \mathcal{A}\right) \\
& = \limsup_{n \rightarrow \infty} \Big[ 2 \int \psi\, d\eta_{t_n} + {2t_n}  \lim_{k\to\infty} \frac1k \int  \log \|\A^k(x)\|\, d\eta_{t_n}(x)\Big] 
\\
&= 2 \lambda_1\left(\mu,\A\right)
\end{aligned}
$$
This contradicts the fact that the cocycles have simple Lyapunov spectrum (cf. \cite{bonatti2004lyapunov}).  This proves the claim. 
\end{proof}

\medskip
The claim implies the existence of the Oseledets’ subspace $\xi_{\ast} (x)$ with the slowest Lyapunov exponent.

We are now in a position to show that the Gibbs measures ${\hat \mu_t}$ are the unique equilibrium states for each $t\in (-t_3,0)$. The argument is to prove that every such equilibrium state is a Gibbs measure, and to use the uniqueness of Gibbs measures.
Let $\tilde{\mu}$ be an ergodic equilibrium measure for $\Phi_{\A, t}$. By Claim~1, 
$\lambda_1(\tilde{\mu}, \mathcal{A})>\lambda_2(\tilde{\mu}, \mathcal{A})$. 
We will show that $\tilde{\mu}={\hat \mu_t}$ (this implies that ${\hat \mu_t}$ is the unique equilibrium state, since each equilibrium measure is a barycentre of the collection of all ergodic equilibrium states).
Define
$$ 
Y=\left\{x \in \Sigma^+: \overline{\xi_*}(x) \text { is well defined and 0-dimensional}\right\}
$$ 
and note that $\sigma^{-1}(Y)=Y$. Consider the potential $\phi_{\mathcal{A}}: \Sigma^+ \rightarrow \mathbb{R} \cup\{-\infty\}$ defined by
$$
\phi_{\mathcal{A}}(x):= \begin{cases}-\log \left\|\mathcal{A}_*^{-1}(x) \xi_*(x)\right\|, & x \in Y \\ -\infty, & x \notin Y .\end{cases}
$$
It is simple to check that
\begin{equation}\label{LE as Birkhoff avarges}
    \int \phi_{\mathcal{A}} \mathrm{d} \tilde{\mu}=\lambda_1(\tilde{\mu}, \A).
\end{equation}
We also denote 
$$
g_t(x):=\frac{g(x) e^{t \phi_{\mathcal{A}}(x)}}{\rho_t} \frac{{\hat h_t}\left(x, \overline{\xi_*}(x)\right)}{{\hat h_t}\left(\sigma x, \overline{\xi_*}(\sigma x)\right)}
$$
when $x \in Y$ and 0 otherwise, which can be shown to be a $g$-function. 
By \eqref{LE as Birkhoff avarges}, the definition f $g_t$ and equality $P\left(\sigma,\Phi_{\A, t}^g \right)=\log \rho_t$ (see \eqref{TP is equal to the log SR}), we have
$$
h_{\tilde{\mu}}(\sigma)+\int \log g_t \mathrm{~d} \tilde{\mu}=h_{\tilde{\mu}}(\sigma)+\int \log g \mathrm{~d} \tilde{\mu}+t \lambda_1(\tilde{\mu}, \A)-P\left(\sigma,\Phi_{\A, t}^g \right)=0,
$$
thus concluding that $\tilde{\mu}$ is an equilibrium state for $\log g_t$. Hence, by \cite{Led74} we have that
$ 
\L_{\log{g_t}}^* \tilde{\mu}=\tilde{\mu}.
$ 
We now use the fact that $\sigma$ is topologically mixing. 
Let $k \in \mathbb{N}$ be such that for all $I, J \in \Sigma^*$ there exists $K \in \Sigma_k$ such that $I K J$ is admissible. 
In consequence,
given $n\ge 1$, $I \in \Sigma_n$ and $x \in \Sigma$ there is at least a point $z \in \sigma^{-(n+k)}(x)$ with $z \in[I]$. Hence,
$$
\begin{aligned}
\L_{\log g_t}^{n+k} 1_{[I]}(x)= & \rho_t^{-(n+k)} \sum_{y \in \sigma^{-(n+k)} (x)} g^{(n+k)}(y)\left\|\mathcal{A}_*^{-(n+k)}(y) \xi_*(y)\right\|^{-t} \frac{{\hat h_t}\left(y, \overline{\xi_*}(y)\right)}{{\hat h_t}\left(x, \overline{\xi_*}(x)\right)} \\
\geq & \rho_t^{-(n+k)}\left(\inf {\hat h_t}\right)\left(\sup {\hat h_t}\right)^{-1} g^{(n+k)}(z)\left\|\mathcal{A}_*^{-(n+k)}(z) \xi_*(z)\right\|^{-t} \\
=& \rho_t^{-(n+k)}\left(\inf {\hat h_t}\right)\left(\sup {\hat h_t}\right)^{-1} g^{(n+k)}(z)\left\|\mathcal{A}^{[n+k]}(z) \xi_*(x)\right\|^t \\
\geq & \rho_t^{-(n+k)}\left(\inf {\hat h_t}\right)\left(\sup {\hat h_t}\right)^{-1} g^{(n+k)}(z)\left\|\mathcal{A}^{[n+k]}(z)\right\|^t \\
\geq & \left(\rho_t^{-k} e^{-k\|\log g\|_{\infty}}\left(\max_{y \in \Sigma^+}\left\|\mathcal{A}^k(y)\right\|\right)^t\left(\inf {\hat h_t}\right)\left(\sup {\hat h_t}\right)^{-1}\right). 
 \rho_t^{-n} g^{(n)}(z)\left\|\mathcal{A}^{[n]}(z)\right\|^t
\end{aligned}
$$
where the second equality uses 
\begin{equation}\label{relation betwwen the smallest and the biggest LE}
\left\|\mathcal{A}^{[n]}(x) \xi_*\left(\sigma^n x\right)\right\|=\left\|\mathcal{A}^{[n]}(x) \frac{\mathcal{A}_*^{-n}(x) \xi_*(x)}{\left\|\mathcal{A}_*^{-n}(x) \xi_*(x)\right\|}\right\|=\left\|\mathcal{A}_*^{-n}(x) \xi_*(x)\right\|^{-1}. 
\end{equation}
 and the second and third inequalities use that $t$ is negative.
Hence, by bounded distortion,
$$
\begin{aligned}
\L_{\log g_t}^{n+k} 1_{[I]}(x)  
{\geq}  & C^t \cdot \left(\rho_t^{-k} e^{-k\|\log g\|_{\infty}}\left(\max _{y \in \Sigma}\left\|\mathcal{A}^k(y)\right\|\right)^t\left(\inf {\hat h_t}\right)\left(\sup {\hat h_t}\right)^{-1}\right) \\
& \cdot e^{-n P\left(\sigma,\Phi_{\A, t}^g\right)} \inf_{z\in [I]}g^{(n)}(z)\left\|\mathcal{A}^n(I)\right\|^t
\end{aligned}
$$
for every $x\in \Sigma^+$.
Altogether this shows that there exists $C,C'>0$ (depending on $t$) such that for all $n\ge 1$ and $I \in \Sigma_n$,
$$
\tilde{\mu}([I])
= \tilde{\mu} (\L_{\log g_t}^{n+k} 1_{[I]}) \geq C e^{-n P\left(\sigma,\Phi_{\A, t}^g \right)}\left\|\mathcal{A}^n(I)\right\|^t \inf _{z \in[I]} g^{(n)}(z) \ge C' \hat \mu_t([I]).
$$
Thus, for every $n\ge 1$,
$$
\sum_{I\in \Sigma_n} \hat\mu_t([I]) \log \frac{\tilde \mu([I])}{\hat\mu_t([I])}
\ge \log C' > -\infty.
$$
Lemma 5.4 in \cite{Feng11} ensures that $\tilde{\mu}\ll {\hat \mu_t}$. Since both measures are $\sigma$-invariant and ergodic then ${\hat \mu_t}=\tilde{\mu}$. 
This completes the proof of the theorem. 
\end{proof}


\begin{proof}[Proof of Theorem \ref{thm:Main1}]
  The statements in the theorem are a direct consequence of Theorem~\ref{The measure is Gibbs} and Theorem~\ref{uniqness}, noticing that equilibrium states for $t\Phi_\A$ correspond to equilibrium states for the potential 
  $\Phi_{\A, t}$ in the special case that $\psi$ is the zero potential. 
\end{proof}

\section{Proof of Theorem \ref{thm:Main2}}\label{Sec: psi mixing}
In this section, we prove that the Gibbs measures for 1-typical cocycles have $\psi$-mixing.
Let us first recall some
necessary concepts. For any $\mathcal{A}:\Sigma^+ \rightarrow GL(d, \R)$ and $I\in \Sigma$, recall that 
\begin{equation*}\label{definition of the cocycle for words}
     \|\mathcal{A}(I)\|=\max_{x\in [I]} \|\mathcal{A}^{|I|}(x)\|.
\end{equation*}

We say that a fiber-bunched linear cocycle $\mathcal{A}$  is \textit{k-quasi-multiplicative} if there exist $c>0$ and $k \in \N$ such that for all $I, J \in \mathcal{L}$, there is $K=K(I, J) \in \Sigma_{k}$ such that $IKJ \in \Sigma^*$ and 
\begin{equation}
\label{eq:defquasimult}
   \|\mathcal{A} (IKJ)\|\geq c \|\mathcal{A}(I)\| \|\mathcal{A}(J)\|.
\end{equation}
\begin{lem}\label{k-QM}
Let $\A: \Sigma \to \glr$ be a 1-typical fiber-bunched cocycle. Then the cocycle $\A$ is $k$-quasi-multiplicative.
\end{lem}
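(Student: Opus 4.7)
The plan is to follow the strategy of Park \cite{park2020quasi} for $1$-typical cocycles, adapted to incorporate the stable and unstable holonomies of the fiber-bunched setting. The key observation is that pinching and twisting at the homoclinic pair $(p,z)$ from Definition~\ref{typical1} provide enough freedom to align dominant singular directions of $\mathcal{A}(I)$ and $\mathcal{A}(J)$ via a uniformly bounded bridging word. First I would fix, for every word $I\in\Sigma^{*}$, a representative $x_I\in[I]$ with $\|\mathcal{A}^{|I|}(x_I)\|\geq \tfrac12\|\mathcal{A}(I)\|$, a unit vector $u_I$ realizing this norm and $\bar v_I:=\overline{\mathcal{A}^{|I|}(x_I)u_I}\in\mathbb{P}\R^d$; by the bounded distortion estimate~\eqref{BDD-dis}, the choice of representative only costs uniform multiplicative constants.

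Second, using Remark~\ref{fixed point} I would assume $p$ is fixed and, writing $P:=\mathcal{A}(p)$ and $H:=\widetilde H_p^z$, I would construct a finite family of ``bridging matrices'' $\{M_j\}_{j=1}^N$ of the form $M_j=P^{a_j}HP^{b_j}$ with the following property: for every pair $(\bar v,\bar u)\in\mathbb{P}\R^d\times\mathbb{P}\R^d$ there exists $j$ with $|\langle M_j\bar v,\bar u\rangle|\geq c_0$ for some uniform $c_0>0$. The proof is by contradiction and compactness on $\mathbb{P}\R^d\times\mathbb{P}\R^d$: if no finite family worked, one would extract a pair $(\bar v_0,\bar u_0)$ such that $\langle P^aHP^b\bar v_0,\bar u_0\rangle=0$ for all $a,b\geq 0$. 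Using the pinching (distinct eigenvalue moduli of $P$) to isolate individual eigen-components in the limit $a,b\to\infty$, one would force a linear dependence among $\{H v_i\}\cup\{v_j\}$ violating the twisting condition (ii) of Definition~\ref{typical1}.

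Third, I would realize each $M_j$ (up to bounded holonomy error) as the cocycle value along an admissible word $K_j\in\Sigma^{*}$: this uses the definition $\widetilde H_p^z=H^s_{p\leftarrow z}\circ H^u_{z\leftarrow p}$ together with the identity~\eqref{eq:analogueeq}, and gives $\mathcal{A}^{|K_j|}(q_j)=H^u_{\cdot\leftarrow\cdot}\circ P^{a_j}HP^{b_j}\circ H^s_{\cdot\leftarrow\cdot}$ for some $q_j\in[K_j]$, modulo $\theta$-H\"older holonomy factors of bounded operator norm by Subsection~\ref{subsec:fiberbunching}(d). To make all $|K_j|$ equal to a common $k$, I would pad shorter words with additional iterations at $p$: since the bridging directions are arranged to be non-orthogonal to the top pinching eigendirection, extra powers of $P$ only multiply the relevant inner product by (a power of) the largest eigenvalue of $P$, preserving a uniform lower bound. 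Finally, topological mixing of $\sigma$ supplies uniformly bounded transition words $T_1,T_2$ so that $I\,T_1K_jT_2\,J$ is admissible; absorbing $T_1,T_2$ into $K$ and redefining $k$ accordingly yields the desired uniform length.

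To close the argument I would compute, choosing $j=j(\bar v_I,u_J)$ as in the second step and $y\in[IKJ]$ appropriate,
\[
\|\mathcal{A}(IKJ)\|\geq \|\mathcal{A}^{|IKJ|}(y)u_J\|\gtrsim \|\mathcal{A}(J)\|\cdot|\langle M_j\bar v_J,u_I\rangle|\cdot\|\mathcal{A}(I)\|\geq c\,\|\mathcal{A}(I)\|\,\|\mathcal{A}(J)\|,
\]
where the intermediate $\gtrsim$ absorbs the holonomy and distortion factors into the uniform constant $c>0$. The main obstacle is the compactness/twisting argument in the second step, combined with the bookkeeping of holonomy errors in the third step: in the one-step setting of \cite{park2020quasi} the holonomies are trivial and the matrix products are exact, whereas here one must ensure that the $\theta$-H\"older holonomy errors do not destroy the uniform lower bound $c_0$, which is precisely what fiber-bunching provides.
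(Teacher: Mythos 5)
Your proposal essentially tries to reprove Park's Theorem~A from scratch, whereas the paper's proof of Lemma~\ref{k-QM} is a short reduction: it \emph{cites} Park's result~\cite{park2020quasi}, which already establishes, for $1$-typical fiber-bunched cocycles with non-trivial holonomies, that one may find a connecting word $K=K(I,J)$ of length $|K|\le k$ satisfying~\eqref{eq:defquasimultweakP}. You appear to read~\cite{park2020quasi} as a one-step result (``in the one-step setting of \cite{park2020quasi} the holonomies are trivial''), but that paper works precisely in the fiber-bunched setting, with stable/unstable holonomies and the homoclinic loop $\widetilde H_p^z$ exactly as in Definition~\ref{typical1}. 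So the bridging-matrix construction, the compactness/twisting argument, the realization of $M_j$ as cocycle blocks, and the absorption of holonomy errors are all already done there; re-deriving them is wasted effort and, in the form you sketch, leaves real gaps (e.g.\ the passage from $\langle P^aHP^bv_0,u_0\rangle=0$ for all $a,b$ to a violation of twisting requires isolating eigencomponents of \emph{both} $v_0$ and $u_0$ via a Vandermonde argument and then a dimension count; you wave at this but do not carry it through).

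The genuinely new content of the lemma is solely the upgrade from $|K|\le k$ to $|K|=k$ (the definition~\eqref{eq:defquasimult} demands $K\in\Sigma_k$, exactly). Your attempt to handle this by ``padding with additional iterations at $p$'' is in the right spirit but has a gap: you assert that extra powers of $P$ ``only multiply the relevant inner product by a power of the largest eigenvalue,'' which would require the bridging directions to be non-orthogonal to the top pinching eigendirection. That property is \emph{not} delivered by your compactness argument, which only gives $|\langle M_j\bar v,\bar u\rangle|\ge c_0$ for the specific pair under consideration; inserting further powers of $P$ can in principle rotate $M_j\bar v$ away from $\bar u$ unless one first shows alignment with the dominant eigendirection. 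The paper avoids this entirely by observing that Park's construction already has a free integer parameter $\ell\ge\ell_0$ (from \cite[Lemma~4.13]{park2020quasi}) so that $|K(I,J)|=2m+2\bar\tau+n+\hat n+2\ell$, with $m,n,\hat n,\bar\tau$ uniformly bounded; choosing $\ell=\ell(I,J)$ to make this sum a fixed constant $k_0$ yields $|K|=k_0$ without re-examining any of the estimates. That is the approach you should take: quote Park's theorem for the hard part and exploit the internal freedom in his connecting-word construction to fix the length.
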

\begin{proof}
This statement is similar to the proof of Theorem A in \cite{park2020quasi}, where it is shown that
if $\mathcal{A}$ is a 1-typical cocycle then there exist $c>0$ and $k \in \N$ such that for all $I, J \in \Sigma^*$, there is $K=K(I, J) \in \Sigma^*$ with $|K|\le k$ such that $IKJ \in \Sigma^*$ and 
\begin{equation}
\label{eq:defquasimultweakP}
   \|\mathcal{A} (IKJ)\|\geq c \|\mathcal{A}(I)\| \|\mathcal{A}(J)\|.
\end{equation}
Hence, in order to prove the theorem one needs to show that 
~\eqref{eq:defquasimultweakP} holds with words $K$ of constant length $k$. 
We start by noticing that, 
if $\ell_0\ge 1$ is the integer given by
\cite[Lemma~4.13]{park2020quasi}, for any $\ell \ge \ell_0$
the transition word $K=K(I,J)$ can be chosen such that 
\begin{equation}
\label{eqqK}
    |K(I,J)|= 2m + 2\bar \tau + n +\hat n + 2\ell
\end{equation}
where $m=m(I,J)$, $n=n(I,J)$ and 
$\hat n=\hat n(I,J)$ are constants determined by invariance of cones
(see Lemmas 4.7 and 4.12 in \cite{park2020quasi}) and
 $\bar\tau$ is a constant given by primitivity of the subshift of finite type, 
 all of them independent of $\ell$. 
In particular, there exists $C\ge 1$ so that $|K(I,J)|\le C + 2\ell$
(cf. \cite[page~1983]{park2020quasi}). 

\medskip
We now show that the length of the connecting words $K\in \mathcal{L}$ above can be chosen uniform  
we use the same notations as in \cite{park2020quasi} whenever possible, for the reader's convenience. 
Fix $k_0=C+3\ell_0$. Given $I,J \in \mathcal L$ choose 
$$
\ell(I,J)=\frac12[C+3\ell_0-2m(I,J) - 2\bar \tau - n(I,J) -\hat n(I,J)]
$$ 
which, by construction, satisfies 
$\ell(I,J) \ge \frac32 \ell_0$.
The argument described above guarantees that 
there exists $K=K(I,J)\in \Sigma^*$
satisfying equations ~\eqref{eq:defquasimultweakP} and \eqref{eqqK} with $\ell=\ell(I,J)$, hence $|K(I,J)|=k_0$.
This completes the proof of the lemma.

\end{proof}



\begin{proof}[Proof of Theorem~\ref{thm:Main2}]
The proof is inspired by \cite{MP-uniform-qm}. Let $t\geq 0$. By Theorem \ref{thm:Main1}, there is a unique Gibbs equilibrium measure  $\hat{\mu}_t$ for $t\Phi_{\A}$.
Since $\hat{\mu}_t$ is the Gibbs equilibrium measure for $t\Phi_{\A}$,
there exists $C_0>0$ such that for any $n \in \N$ and $I \in \Sigma_n$,
\begin{equation}\label{Gibbs-property}
C_{0}^{-1} \|\A^{n}(x)\|^{t} \leq e^{|I| P\left(\sigma,t\Phi_{\A}\right)} \hat{\mu}_t([I]) \leq C_{0} \|\A^{n}(x)\|^{t}
\end{equation}
for every $x  \in [I]$. Since the cocycle is 1-typical, by Lemma \ref{k-QM},  there exist an integer $m\in \N$ and constant $C_{1}>0$ such that for all $I, J \in \Sigma$ there exists $K \in \Sigma_m$ such that
\begin{equation}\label{k-qm-property}
 \|\A(IKJ)\| \geq C_1 \|\A(I)\| \|\A(J)\|.
\end{equation}

 Therefore, by the bounded distortion \eqref{BDD-dis}, \eqref{Gibbs-property} and \eqref{k-qm-property}, for every $I, J \in \Sigma^{*}$ we have
$$\begin{aligned}
C_1 \hat{\mu}_t([I]) \hat{\mu}_t([J]) &\leq C^2 C_{0}^{2} C_{1} e^{-(|I|+|J|) P\left(\sigma,t\Phi_{\A}\right)}  \|\A(I)\|^{t} \|\A(J)\|^{t} \\
&\leq C^2 C_{0}^{2} e^{-(|I|+|J|) P\left(\sigma,t\Phi_{\A}\right)}   \|\A(IKJ)\|^{t}  \\
&\leq C^2 C_{0}^{3} e^{|K| P\left(\sigma,t\Phi_{\A}\right)}  \hat{\mu}_t([IKJ]) \\
&\leq C^2 C_{0}^{3} e^{m P\left(\sigma,t\Phi_{\A}\right)} \sum_{|K|=m} \hat{\mu}_t([IKJ]) \\
& = C^2 C_{0}^{3} e^{m P\left(\sigma,t\Phi_{\A}\right)} \hat{\mu}_t\left([I] \cap \sigma^{-m-|I|}[J]\right)
\end{aligned}
$$
so that
\begin{equation}\label{one-side-thm1}
\hat{\mu}_t\left([I] \cap \sigma^{-m-|I|}[J]\right) \geq \kappa \hat{\mu}_t([I]) \hat{\mu}_t([J])
\end{equation}
where $\kappa:= C^2 C_{0}^{-3} C_{1} e^{-m P\left(\sigma,t\Phi_{\A}\right)}$. 

By \eqref{one-side-thm1}, for any $n \geq m$ we have that
$$
\begin{aligned}
\hat{\mu}_t\left([I] \cap \sigma^{-n-|I|}[J]\right) &=\sum_{|K'|=n-m} \hat{\mu}_t\left([IK'] \cap \sigma^{-m-|K'|-|I|}[ J]\right) \\
& \geq \kappa \sum_{|K'|=n-m} \hat{\mu}_t([IK']) \hat{\mu}_t([J]) \\
&=\kappa\hat{\mu}_t([J]) \sum_{|K'|=n-m} \hat{\mu}_t([IK']) \\
&=\kappa \hat{\mu}_t([I]) \hat{\mu}_t([J]) .
\end{aligned}
$$
Thus, we have, by an approximation argument, that
\[\liminf _{n \rightarrow \infty} \hat{\mu}_t\left(X \cap \sigma^{-n} Y\right) \geq \kappa^{-1} \hat{\mu}_t(X) \hat{\mu}_t(Y)\]
for all $X, Y$ Borel measurable. The above inequality implies that $\hat{\mu}_t$ is totally ergodic. On the other hand,
$$\begin{aligned}
\hat{\mu}_t\left([I] \cap \sigma^{-m-|I|}[J]\right)&=\sum_{|K|=m} \hat{\mu}_t([IKJ ])\\
& \leq C_{0} C \sum_{|K|=m} e^{-(|I|+|K|+|J|) P\left(\sigma,t\Phi_{\A}\right)} \|\A(IKJ)\|^{t}\\
& \leq C_{0} e^{-(|I|+|J|) P\left(\sigma,t\Phi_{\A}\right)} \|\A(I)\|^{t} \|\A(J)\|^{t} \left(\sum_{|K|=m} e^{-|K| P\left(\sigma,t\Phi_{\A}\right)} \|\A(K)\|^{t}\right)\\
& \leq C_{0}^{4} \hat{\mu}_t([I]) \hat{\mu}_t([J])\left(\sum_{|K|=m} \hat{\mu}_t([K])\right)\\
& =C_{0}^{4} \hat{\mu}_t([I]) \hat{\mu}_t([J])
\end{aligned}
$$
so that
\begin{equation}\label{other-side-thm1}
\hat{\mu}_t\left([I] \cap \sigma^{-m-|I|}[I]\right) \leq \delta \hat{\mu}_t([I]) \hat{\mu}_t([J]),
\end{equation}
where $\delta:=C_{0}^{4}.$

 The above inequality implies that $\hat{\mu}_t$ is mixing by \cite[Theorem 2.1]{Ornstein}. By an approximation argument, we have that

$$
\begin{aligned}
& \psi_n^*=\sup \left\{\frac{\hat{\mu}_t(A \cap B)}{\hat{\mu}_t(A) \hat{\mu}_t(B)}: A \in \bigvee_{i=n}^{\infty} \sigma^{-i} \mathcal{U}, B \in \bigvee_{i=-\infty}^{-1} \sigma^{-i} \mathcal{U}, \hat{\mu}_t(A) \hat{\mu}_t(B)>0\right\} \leq \delta\\
& \psi_n^{\prime}=\inf \left\{\frac{\hat{\mu}_t(A \cap B)}{\hat{\mu}_t(A) \hat{\mu}_t(B)}: A \in \bigvee_{i=n}^{\infty} \sigma^{-i}\mathcal{U}, B \in \bigvee_{i=-\infty}^{-1} \sigma^{-i} \mathcal{U}, \hat{\mu}_t(A) \hat{\mu}_t(B)>0\right\} \geq \kappa^{-1}
\end{aligned}
$$
for all $n \geq m$, where $\mathcal{U}$ defines in \eqref{definition of U}.
By \cite[Theorem 1]{bradley1983mixing}, this implies that $\hat{\mu}_t$ is $\psi$-mixing.  Also, it is easy to see that $\psi$-mixing implies weak Bernoulli. 

 Let $-t_{\ast}<t<0$.  The proof is similar to $t\geq 0$ case, but we include it here for the reader's convenience.

By Theorem \ref{thm:Main1}, for all $t\in (-t_{\ast}, \infty)$, there is a unique Gibbs equilibrium measure $\hat{\mu}_t$ for $t\Phi_{\A}$.  Since $\hat{\mu}_t$ is the Gibbs measure for $t\Phi_{\A}$,
there exists $C_0>0$ such that for any $n \in \N$ and $I \in \Sigma_n$,
\begin{equation}\label{Gibbs-property1}
C_{0}^{-1} \|\A^{n}(x)\|^{t} \leq e^{|I| P\left(\sigma,t\Phi_{\A}\right)} \hat{\mu}_t([I]) \leq C_{0} \|\A^{n}(x)\|^{t}
\end{equation}
for every $x  \in [I]$.  Since the cocycle is 1-typical, by Lemma \ref{k-QM},  there exist an integer $m\in \N$ and constant $C_{1}>0$ such that for all $I, J \in \Sigma$ there exists $K \in \Sigma_m$ such that
\begin{equation}\label{k-qm-property1}
 \|\A(IKJ)\|^t \leq C_1 \|\A(I)\|^t \|\A(J)\|^t.
\end{equation}
Also, by the super-multiplicative activity property, there is $C_2>0$ such that
\begin{equation}\label{super-multiplicative}
    \|\A(IKJ)\|^t \geq C_2 \|\A(I)\|^t \|\A(J)\|^t.
\end{equation}
 Therefore, by the bounded distortion \eqref{BDD-dis}, \eqref{Gibbs-property1} and \eqref{super-multiplicative}, for every $I, J \in \Sigma^{\ast}$ we have
$$\begin{aligned}
C_2 \hat{\mu}_t([I]) \hat{\mu}_t([J]) &\leq C^2 C_{0}^{2} C_{2} e^{-(|I|+|J|) P\left(\sigma,t\Phi_{\A}\right)}  \|\A(I)\|^{t} \|\A(J)\|^{t} \\
&\leq C^2 C_{0}^{2} e^{-(|I|+|J|) P\left(\sigma,t\Phi_{\A}\right)}   \|\A(IKJ)\|^{t}  \\
&\leq C^2 C_{0}^{3} e^{|K| P\left(\sigma,t\Phi_{\A}\right)}  \hat{\mu}_t([IKJ]) \\
&\leq C^2 C_{0}^{3} e^{m P\left(\sigma,t\Phi_{\A}\right)} \sum_{|K|=m} \hat{\mu}_t([IKJ]) \\
& = C^2 C_{0}^{3} e^{m P\left(\sigma,t\Phi_{\A}\right)} \hat{\mu}_t\left([I] \cap \sigma^{-m-|I|}[J]\right)
\end{aligned}
$$
so that
\begin{equation}\label{one-side-thm11}
\hat{\mu}_t\left([I] \cap \sigma^{-m-|I|}[J]\right) \geq \kappa \hat{\mu}_t([I]) \hat{\mu}_t([J])
\end{equation}
where $\kappa:= C^2 C_{0}^{-3} C_{1} e^{-m P\left(\sigma,t\Phi_{\A}\right)}$. 

By \eqref{one-side-thm11}, for any $n \geq m$ we have that
$$
\begin{aligned}
\hat{\mu}_t\left([I] \cap \sigma^{-n-|I|}[J]\right) &=\sum_{|K'|=n-m} \hat{\mu}_t\left([IK'] \cap \sigma^{-m-|K'|-|I|}[ J]\right) \\
& \geq \kappa \sum_{|K'|=n-m} \hat{\mu}_t([IK']) \hat{\mu}_t([J]) \\
&=\kappa\hat{\mu}_t([J]) \sum_{|K'|=n-m} \hat{\mu}_t([IK']) \\
&=\kappa \hat{\mu}_t([I]) \hat{\mu}_t([J]) .
\end{aligned}
$$
Thus, we have, by an approximation argument that
\[\liminf _{n \rightarrow \infty} \hat{\mu}_t\left(X \cap \sigma^{-n} Y\right) \geq \kappa^{-1} \hat{\mu}_t(X) \hat{\mu}_t(Y)\]
for all $X, Y$ Borel measurable. The above inequality implies that $\hat{\mu}_t$ is totally ergodic. On the other hand,
$$\begin{aligned}
\hat{\mu}_t\left([I] \cap \sigma^{-m-|I|}[J]\right)&=\sum_{|K|=m} \hat{\mu}_t([IKJ ])\\
& \leq C_{0} C_{1} C \sum_{|K|=m} e^{-(|I|+|K|+|J|) P\left(\sigma,t\Phi_{\A}\right)} \|\A(IKJ)\|^{t}\\
&  \stackrel{~\eqref{k-qm-property1}}{\leq} C_{0}C_{1} e^{-(|I|+|J|) P\left(\sigma,t\Phi_{\A}\right)} \|\A(I)\|^{t} \|\A(J)\|^{t} \left(\sum_{|K|=m} e^{-|K| P\left(\sigma,t\Phi_{\A}\right)} \|\A(K)\|^{t}\right)\\
& \leq C_{0}^{4} C_{1}\hat{\mu}_t([I]) \hat{\mu}_t([J])\left(\sum_{|K|=m} \hat{\mu}_t([K])\right)\\
& =C_{0}^{4} C_{1}\hat{\mu}_t([I]) \hat{\mu}_t([J])
\end{aligned}
$$
so that
\begin{equation}\label{other-side-thm1b}
\hat{\mu}_t\left([I] \cap \sigma^{-m-|I|}[I]\right) \leq \delta \hat{\mu}_t([I]) \hat{\mu}_t([J]),
\end{equation}
where $\delta:=C_{1} C_{0}^{4}.$
The rest of the proof is similar to the case of positive $t$.
Finally, the quasi-Bernoulli property follows from equations
\eqref{one-side-thm1} and \eqref{other-side-thm1} for positive $t$ and equations
~\eqref{one-side-thm11} and ~\eqref{other-side-thm1b} for negative $t$.    
This completes the proof of the theorem.
\end{proof}

We derive the following consequence.

\begin{cor}
    \label{mut have LPS}
There is $t_{\ast}>0$ such that for each $t\in (-t_{\ast},t_{\ast})$:
    \begin{enumerate}
        \item The probability measures $\hat{\mu}_t$ and $\pi_*^+ \hat{\nu}_t$ have local product structure; 
        \item 
         For every $x \in \Sigma^{+}$, we have
\begin{equation}\label{conditional measures}
\hat{m}_{t,x}=\int m_{t,y} \, d \hat{\mu}_{t,x}^s(y)=\int \delta_{\xi(y)} \; d \hat{\mu}_{t,x}^s(y),
\end{equation}
where $(\hat{\mu}_{t,x}^s)_{x\in \Sigma^+}$ is a disintegration of $\hat{\mu}_t$ on the partition $\{W^s_{\text{loc}}(x)\colon x\in \Sigma^+\}$;
    \end{enumerate}
\end{cor}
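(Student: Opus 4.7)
The two claims will be handled in sequence. Part~(1) is an essentially immediate consequence of the quasi--Bernoulli property established in Theorem~\ref{thm:Main2}, while part~(2) follows by combining this local product structure with Lemma~\ref{invariant measure} and a standard disintegration argument.

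For part~(1), Theorem~\ref{thm:Main2} provides a constant $D>0$ such that
\[
D^{-1}\hat\mu_t([I])\hat\mu_t([J]) \;\le\; \hat\mu_t([IJ]) \;\le\; D\,\hat\mu_t([I])\hat\mu_t([J])
\]
for every admissible concatenation $IJ$ of words in $\Sigma^*$. Since the natural extension $\mu_t=\pi_* m_t$ of $\hat\mu_t$ to $\Sigma$ assigns to any two-sided cylinder $[x_{-n}\ldots x_{m-1}]$ the same mass as the corresponding one-sided cylinder (by $\sigma$-invariance), the quasi-Bernoulli inequality yields at once the comparison in \eqref{eq:LPSm}, i.e.\ the local product structure of $\mu_t=\pi_* m_t$. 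The corresponding statement for $\pi_*^+\hat\nu_t$ is then immediate: by Lemma~\ref{h is bdd} the density $\hat h_t=d\hat m_t/d\hat\nu_t$ is uniformly bounded above and below away from zero, so $\pi_*^+\hat\nu_t$ and $\hat\mu_t$ are mutually absolutely continuous with uniformly bounded Radon--Nikodym derivatives, and the LPS property transfers.

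For part~(2), with LPS for $\pi_* m_t=\mu_t$ available from part~(1), Lemma~\ref{invariant measure} delivers $m_{t,y}=\delta_{\xi_*(y)}$ for $\mu_t$-a.e.~$y\in\Sigma$. The one-sided measure $\hat m_t$ is the pushforward of $m_t$ under $\Pi\times\mathrm{id}:\Sigma\times\P\to\Sigma^+\times\P$, and the fiber of $\Pi$ over $x\in\Sigma^+$ is the local stable set $W^s_{\loc}(x)$. Disintegrating $m_t$ along the measurable partition $\{W^s_{\loc}(x)\times\P\}_{x\in\Sigma^+}$ and using the disintegration $(\hat\mu_{t,x}^s)_{x\in\Sigma^+}$ of $\mu_t$ along local stable sets (which exists by part~(1)), one obtains
\[
\hat m_{t,x}\;=\;\int_{W^s_{\loc}(x)} m_{t,y}\,d\hat\mu_{t,x}^s(y)\;=\;\int_{W^s_{\loc}(x)}\delta_{\xi(y)}\,d\hat\mu_{t,x}^s(y),
\]
where we identify the top Oseledets direction $\xi(y)$ of $\A$ with the slowest Oseledets direction $\xi_*(y)$ of $\A_*^{-1}$, as is standard in the 1-typical setting.

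The step that requires the most care is the bookkeeping between the one- and two-sided disintegrations: one must verify that the family $(\hat\mu_{t,x}^s)_{x\in\Sigma^+}$ appearing in the statement coincides with the disintegration produced by the parametrization of local stable leaves through $\Pi$, so that the chain of equalities above reads off directly from $m_{t,y}=\delta_{\xi_*(y)}$. This is routine given the local product structure and the standing convention in Subsection~\ref{subsec:fiberbunching} that $\A$ is constant along local stable sets (so that $\xi_*$ is constant along stable leaves as well), but I would write it out explicitly following the template of \cite[Lemma~4.7 and Theorem~3]{bonatti2004lyapunov}.
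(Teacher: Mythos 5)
Your proof follows essentially the same route as the paper's: part~(1) is obtained from the quasi--Bernoulli property in Theorem~\ref{thm:Main2} (the paper invokes \eqref{eq:LPSm} directly, which is the same observation) together with the uniform density bound of Lemma~\ref{h is bdd}; part~(2) is deduced from part~(1) via the Bonatti--Viana disintegration machinery, which the paper cites as Theorem~\ref{BV's results} (item~(5)) and you cite through Lemma~\ref{invariant measure} and \cite[Lemma~4.7, Theorem~3]{bonatti2004lyapunov} — these are the same tools. One small point worth tightening: when transferring local product structure from $\hat\mu_t$ to $\pi_*^+\hat\nu_t$, the relevant Radon--Nikodym derivative is $d(\pi_*^+\hat m_t)/d(\pi_*^+\hat\nu_t)$, which is the fiber-average of $\hat h_t$ over $\P$ rather than $\hat h_t$ itself; since $\hat h_t$ is uniformly bounded above and below by Lemma~\ref{h is bdd}, this average inherits the same bounds, so the conclusion stands, but the phrasing should make this intermediate step explicit.
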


\begin{proof}
By Theorem~\ref{thm:Main1}, there is $t_{\ast}>0$ such that for each $t\in (-t_{\ast},t_{\ast})$, $\hat{\mu}_t=\pi_*^+\hat{m}_t$
 is a Gibbs measure.


By Theorem~\ref{thm:Main2}, $\hat{\mu}_t$ satisfies \eqref{eq:LPSm}, so $\hat{\mu}_t$ has local product structure. 
The statment for $\pi_*^+\hat{\nu}_t$ follows from the fact that these measures are absolutely continuous with respect to each other with density bounded away from zero and infinity. 
Finally, 
item (2) follows as a direct consequence of item (1), together with 
Theorem~\ref{BV's results}.
\end{proof}

\section{Hyperbolic repellers and Anosov diffeomorphisms}\label{proof of TheoremC}
In this section, we prove Theorem~\ref{main:t2} and Theorem~\ref{main:t1}.
The strategy used in the proof of both results is to derive a relation between hyperbolic dynamical systems and linear cocycles.

Let us introduce some terminology in case of hyperbolic repellers.
We say that a $\theta$-fiber-bunched repeller $\Lambda$ associated to a $C^{1+\theta}$-map $T$ is \emph{1-typical} if the following conditions are satisfied:
\begin{itemize}
\item[1)] There exists such a periodic point $p_0 \in \Lambda$ such that the eigenvalues of the matrix $A(p_0):=DT^{\operatorname{per}\left(p_0\right)}{p_0}$ 
have multiplicity $1$ and distinct absolute value;
\item[2)] There exists a sequence of points $\left\{z_n\right\}_{n\ge 1_0} \subset \Lambda$ such that
$$
z_0=p_0, \; T (z_n)=z_{n-1} \text {, and } z_n \xrightarrow{n \rightarrow \infty} p_0
$$
and so that, for each $1\le t \le d$, 
the eigenvectors $\left\{v_{1}, \ldots, v_{d}\right\}$ of $A(p_0)$
are such that, for any $I, J \subset \{1, \ldots, d\}$ with $|I|+$ $|J| \leq d$, the set of vectors
$$
\left\{ \tilde{H}_{p_0}^{\left\{z_n\right\},-} \left(v_{i}\right): i \in I\right\} \cup\left\{v_{j,t}: j \in J\right\}
$$
is linearly independent, where  
\begin{equation}
\label{defholnoninv}
\tilde{H}_{p_0}^{\left\{z_n\right\},-}:=\lim _{n \rightarrow \infty}\left(DT{(p_0)} \right)^n\left(DT(z_n)  \right)^{-1} \ldots\left(DT({z_1}) \right)^{-1}.
\end{equation}
\end{itemize}

As in the invertible setting, in case the periodic point $p_0$ given by the previous definition is not fixed, we can consider a power of the $C^{1+\theta}$-map. In this way we will always assume that $p_0$ is a fixed point of $T$ (recall Remark \ref{fixed point}).

\begin{rem}
 It is worth noticing that the choice of a pre-orbit of $p_0$ in item (2) replaces the homoclinic loop in case of invertible maps and that the existence of the limit in ~\eqref{defholnoninv}
is guaranteed by  $C^{\theta}$-regularity of $D T$  and the $\theta$-bunching assumption on  $\Lambda$. Indeed, given $p\in \Sigma$ so that $\pi_1(p)=p_0$, an homoclinic point $z$, the identification $L(\pi_1 p): \mathbb{R}^d \rightarrow T_{p_0} M$ and the cocycle 
$\mathcal{C}$ over $\left(\Sigma, \sigma^{-1}\right)$
(cf. \eqref{def:cocliminv} below), the canonical holonomy $H_{ p \leftarrow z}^{s,-}$ is given by
$$
\begin{aligned}
H_{z, p}^{s,-}= & \lim _{n \rightarrow \infty} \mathcal{C}^n(p)^{-1} \mathcal{C}^n(z)=\lim_{n \rightarrow \infty} L(\pi_1 p)^{-1} \\
& {\left[\left((D_{p_0} h)\right)^n\left((D_{z_n} h) \right)^{-1} \ldots\left((D_{z_1} h)\right)^{-1}\right] L(\pi_1 z) }
\end{aligned}
$$
and, since $L(\pi_1 z)=L(\pi_1 p)$, $H_{z \leftarrow p}^{u,-}=\text{Id}$. In particular
the holonomy loop for $\mathcal C$, defined by $\tilde{H}_{p \leftarrow z}^{s,-} :=H_{z, p}^{s,-} \circ H_{p, z}^{u,-}$, relates to 
$\tilde{H}_{p_0}^{\left\{z_n\right\},-}$
in ~\eqref{defholnoninv} by the conjugacy relation
\begin{equation}\label{relation betwen loops}
   \tilde{H}_{p}^{\left\{z\right\},-}=L(\pi_1 p)^{-1} \circ \tilde{H}_{p_0}^{\left\{z_n\right\},-} \circ L(\pi_1 p) .
\end{equation}
\end{rem}

We will need the following:

\begin{lem}\label{dense subset}
    Let $M$ be a Riemannian manifold, and let $T: M \rightarrow M$ be a $C^r$ map with $r>1$. Assume $\Lambda \subset M$ is a $\theta$-bunched repeller defined by $T$ for some $\theta \in(0,1)$ satisfying $r-1>\theta$. There exists a $C^1$-open neighborhood $\mathcal{V}_1$ of $T$ in $C^r(M, M)$ and a $C^1$-open and $C^r$-dense subset $\mathcal{V}_2$ of $\mathcal{V}_1$ such that $\Lambda_S$ is 1-typical for every $S \in \mathcal{V}_2$.
\end{lem}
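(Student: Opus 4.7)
The plan is to translate the repeller problem to a question about the induced Hölder fiber-bunched cocycle $\mathcal{A}_S = DS|_{\Lambda_S}$ over the one-sided subshift of finite type $(\Sigma^+,\sigma)$ coming from a Markov partition, and then adapt the genericity arguments of Bonatti--Viana \cite{bonatti2004lyapunov} (see also \cite{avila2007simplicity}) to this setting. By standard $C^1$-persistence of hyperbolic repellers, for every $S$ sufficiently $C^1$-close to $T$ the set $\Lambda_S$ is again a hyperbolic repeller, the Markov combinatorics are preserved (so the coding subshift $(\Sigma^+,\sigma)$ does not change), and the $\alpha$-bunching inequality \eqref{def:alhpabunchedT} persists. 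The canonical unstable holonomies for $\mathcal{A}_S$, and in particular the loop $\tilde H_{p_0}^{\{z_n\},-}$ defined in \eqref{defholnoninv}, vary continuously with $S$ (cf.\ the continuity statements in \cite{bonatti2004lyapunov}). Taking $\mathcal{V}_1$ to be the intersection of these open conditions yields the desired neighborhood.

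The next step is to establish the pinching condition on a $C^r$-dense, $C^1$-open subset of $\mathcal{V}_1$. Fix a periodic point $p_0\in \Lambda$; by replacing $T$ by $T^{\mathrm{per}(p_0)}$ we may treat $p_0$ as a fixed point. For $S\in\mathcal{V}_1$ the analytic continuation $p_0(S)$ is well-defined, and the map $S\mapsto DS(p_0(S))\in GL_d(\mathbb R)$ is continuous. Matrices whose eigenvalues fail to be simple with pairwise distinct absolute values form a real-algebraic subvariety of $GL_d(\mathbb R)$ of strictly positive codimension. A $C^r$-small perturbation of $T$ supported in an arbitrarily small neighborhood of $p_0$ perturbs $DS(p_0)$ by an essentially arbitrary small linear map, so pinching is achieved on a $C^r$-dense subset and is plainly $C^1$-open.

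For the twisting condition, pick a pre-orbit $\{z_n\}_{n\geq 1}\subset \Lambda$ of $p_0$ with $z_n\to p_0$; this exists thanks to the (topologically mixing) repelling structure of $\Lambda$, which allows us to prescribe the symbolic backward itinerary. Select the first preimage $z_1\in T^{-1}(p_0)\cap \Lambda\setminus\{p_0\}$ (again available by mixing) and a small ball $U$ around $z_1$ disjoint from $\{p_0\}\cup\{z_n\}_{n\geq 2}$. A $C^r$-small perturbation $S$ of $T$ supported in $U$ leaves the orbit of $p_0$ and hence the matrix $DS(p_0)$ untouched, preserves $\alpha$-bunching, yet modifies $\tilde H_{p_0}^{\{z_n\},-}$ by composition on the right (through the factor $(DS(z_1))^{-1}$ in \eqref{defholnoninv}) with a matrix that can be prescribed in an arbitrary $C^0$-open neighborhood of the identity in $GL_d(\mathbb R)$. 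The set of matrices $\tilde H$ for which the twisting condition \emph{fails} for some pair $(I,J)$ with $|I|+|J|\leq d$ is a finite union of proper real-algebraic subvarieties of $GL_d(\mathbb R)$, so its complement is Zariski-open and dense; a generic perturbation therefore achieves the twisting condition while preserving pinching and bunching.

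The hard part is precisely the separation-of-scales argument in the last paragraph: one must verify that the localized perturbations used to fix twisting do not undo pinching or destroy the $C^1$-persistent properties of $\mathcal{V}_1$, which is ensured by choosing pairwise disjoint small supports for the two perturbations and the $C^1$-openness of $\alpha$-bunching. The union of these conditions is $C^1$-open (by continuity of $DS(p_0)$ and $\tilde H_{p_0}^{\{z_n\},-}$ in $S$, and because both pinching and twisting are open conditions in the corresponding matrix spaces) and $C^r$-dense (by the two-step perturbation argument). Setting $\mathcal{V}_2$ equal to the $C^1$-interior of $\{S\in\mathcal{V}_1 : \Lambda_S \text{ is } 1\text{-typical}\}$ completes the proof.
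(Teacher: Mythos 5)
The paper's own ``proof'' of this lemma is a single sentence that cites \cite[Lemma 5.10]{park2020quasi}; it contains no argument. Your proposal instead reconstructs what is presumably inside that cited lemma, following the Bonatti--Viana / Park two-step perturbation strategy (perturb at the periodic point to get pinching, then perturb along a disjoint pre-orbit to get twisting). The overall architecture is correct and is indeed the standard one.

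There is, however, a real imprecision in the twisting step that you should be aware of. You assert that a perturbation supported in a small ball $U$ around $z_1$ ``modifies $\tilde H_{p_0}^{\{z_n\},-}$ by composition on the right $\ldots$ with a matrix that can be prescribed.'' This is not literally what happens. Once you change $T$ to $S$ on $U$, the point $z_1$ is generically no longer a preimage of $p_0$ under $S$, and the entire pre-orbit must be replaced by the shadowing pre-orbit $\{z_n(S)\}$; the points $z_n(S)$ with $n\ge 2$ move as well (even though they lie outside the support of the perturbation, $S$ selects different backward branches). Consequently the new holonomy loop is a genuinely new limit, not the old one multiplied on the right by a single factor. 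The correct statement is that the map $S\mapsto \tilde H_{p_0}^{\{z_n(S)\},-}$ is differentiable (or at least continuous with controlled modulus) with respect to $S$, and that its variation is dominated by the right-composition with $(DS(z_1(S)))^{-1}$ up to higher-order terms; establishing that dominance is precisely where fiber-bunching enters, because it gives the uniform convergence and Hölder control needed to bound the contribution of the remaining factors. Without this control one cannot conclude that the holonomy can be steered into the Zariski-open good set. So your proposal is in the right spirit and matches the cited argument in outline, but this ``separation of scales'' in the holonomy step --- which you flag as ``the hard part'' without actually closing it --- is exactly the content you would need to supply to make the proof self-contained.
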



 \begin{proof}
 Let $T\in C^r(M,M)$ be as above.
  It follows from the proof of \cite[Lemma 5.10]{park2020quasi} that there exists a $C^1$-open neighborhood $\mathcal{V}_1$ of $T$ in $C^r(M, M)$ and a $C^1$-open and $C^r$-dense subset $\mathcal{V}_2$ of $\mathcal{V}_1$ such that $\Lambda_S$ is 1-typical for every $S \in \mathcal{V}_2$.

 \end{proof}

We are now in a position to complete the proof of Theorem \ref{main:t2}. 
\begin{proof}[Proof of Theorem \ref{main:t2}]

Let us first recall some general facts. As $\Lambda$ is a repeller, there exists a finite Markov partition $\mathcal{R}$ for $\Lambda$, a one-sided subshift of finite type $(\Sigma^+,\sigma^+)$, $\Sigma^+\subset \{1,2,\dots, q\}^\N$, and a H\"older continuous and surjective coding map
\begin{equation}
\label{defxi1}
\chi: \Sigma^{+} \rightarrow \Lambda_S    
\end{equation}
such that $\chi \circ \sigma^{+}=T \circ \chi$.

By fixing a Markov partition of sufficiently small diameter, we may ensure that the $\chi$-image of each cylinder $[j]$ of $\Sigma^{+}, 1 \leq j \leq q$, is contained in an open set on which $T M$ is trivializable. 

\medskip
Consider the natural extension $\left(\Sigma, \sigma\right)$ of $\left(\Sigma^{+}, \sigma^{+}\right)$, and its inverse $\left(\Sigma, \sigma^{-1}\right.)$.
Let $\Pi: \Sigma \rightarrow \Sigma^{+}$ denote the natural projection.
For each $1 \leq j \leq q$ and $y \in[j] \subset \Sigma^{+}$, let $L(y):=L_j(y): \mathbb{R}^d \rightarrow T_{\chi(y)} M$ be a fixed trivialization of $T M$ over an open neighborhood containing $\chi[j]$.  We define a cocycle $\mathcal{C}$ over $\left(\Sigma, \sigma^{-1}\right)$ by
\begin{equation}
\label{def:cocliminv}
\mathcal{C}(x):=L\left(\Pi \sigma^{-1} x\right)^{-1} \circ D_{\chi\left(\Pi \sigma^{-1} (x)\right)}T)^{-1} \circ L(\Pi (x)),
\end{equation}
which can be thought of as the inverse of the 
derivative cocycle $D T\mid_\Lambda$
over $\left(\Sigma, \sigma\right)$ defined in the obvious way.
For any $n\ge 1$, we have
$$
\mathcal{C}^n\left(\sigma^n (x)\right)=L(\Pi x)^{-1} DT^n({\chi(\Pi x)})^{-1} L\left(\Pi \sigma^n (x)\right).
$$

We proceed to reduce the proof of the theorem to the invertible setting.  
By the proof of \cite[Lemma 5.8]{park2020quasi}, for any $\mu \in \M\left(\sigma\right)$ and $\nu \in \mathcal{M}_{\text{inv}}(T)$ related by $\chi_* \mu=\nu$, we have
\begin{equation}\label{entropies and LE are equall}
    h_\mu\left(\sigma\right)=h_\nu(T)
\quad \text{and}\quad 
\lambda_1(\mu, \mathcal{C})= \lambda_1(\nu, DT).
\end{equation}

Now, given $S\in \mathcal V_2$ the repeller $\Lambda_S$ is 1-typical (see Lemma \ref{dense subset}) and, consequently, there cocycle $\mathcal{C}_S$ defined by \eqref{def:cocliminv} is a 1-typical cocycle.  
Therefore, using the latter and \eqref{varitional}, we conclude that
\begin{equation}\label{pressures are equal}
    P(S, t\log \|DS_{|\Lambda_{S}}\|)=P(\sigma, t\log \|\mathcal{C}_S\|).
\end{equation}

Let $T \in C^r(M, M)$ be as in the statement of the theorem and let $S$ be a $C^1$-small perturbation of $T$. 
If the perturbation is sufficiently small, then 
one can use the same  trivialization over $T_{\Lambda} M$ to code the dynamics of $S$ on $\Lambda_S$ via a conjugacy $\chi_S$ (cf. equation~\eqref{defxi1}), and take its natural extension. Then we realize the perturbation $\left.T\right|_{\Lambda}$ to $\left.S\right|_{\Lambda_S}$ as the perturbation of the cocycle $\mathcal{C}$ to $\mathcal{C}_S$ over the same subshift of finite type $\left(\Sigma, \sigma^{-1}\right)$. 
In this way, Theorem~\ref{main:t2} is a consequence of \eqref{pressures are equal}, \eqref{entropies and LE are equall}, Theorems~\ref{thm:Main1} and \ref{thm:Main2}.
Moreover,  Friedman and Ornstein's result \cite{Friedman70} guarantees that the weak Bernoullicity implies that it is conjugate to a Bernoulli shift.

\end{proof}

Now, we prove Theorem \ref{main:t1}, on the non-additive thermodynamic formalism of Anosov diffeomorphisms. 


\begin{proof}[Proof of Theorem \ref{main:t1}]

Assume that $T$ is an Anosov diffeomorphism.
Denoting the dimension of the unstable bundle $E^u$ by $d$, one can realize $\left.D T\right|_{E^u}$ as a 
$\text{GL}(d,\R)$-cocycle over a suitable subshift of finite type $\left(\Sigma, \sigma\right)$. Indeed, the existence of a finite Markov partition for $T$ \cite{Bow} results in a H\"older continuous surjection $\pi_1: \Sigma \rightarrow M$ such that $T \circ \pi_1=\pi_1 \circ \sigma$. By choosing a Markov partition of sufficiently small diameter, one  may assume that the image of each cylinder $[j]$ of $\Sigma, 1 \leq j \leq q$, is contained in an open set on which $E^u$ is trivializable. For $x \in[j]$, we let $L_j(x)$ : $\mathbb{R}^d \rightarrow E_{\pi_1 (x)}^u$ be a fixed trivialization of $E^u$ over $\pi_1([j])$. 
We define the $\alpha$-H\"older $\text{GL}(d,\R)$-cocycle 
$\mathcal{A}$ over the subshift $\left(\Sigma, \sigma\right)$ by
\begin{equation}\label{definition of cocycle for Ansosov}
    \mathcal{A}(x):=\left.L_k(\sigma (x))^{-1} \circ DT({\pi_1 (x)}) \right|_{E^u} \circ L_j(x),
    \text{whenever $\sigma (x) \in [k]$.}
\end{equation}

 The assumption on the periodic point $p$ guarantees that, defining  
the cocycle $\A : \Sigma \to GL(d,\mathbb R)$ by \eqref{definition of cocycle for Ansosov},
the cocycle $\A^n: \Sigma \to GL(d,\mathbb R)$ over the shift $(\Sigma,\sigma^n)$ is 1-typical. 
Then the proof follows from Theorems ~\ref{thm:Main1} and \ref{thm:Main2}.
This completes the proof of the theorem. 
\end{proof}

\subsection*{Acknowledgements.}
RM is supported by the Swedish Research Council grant 104651320. PV was partially supported by CIDMA under the Portuguese Foundation for Science and Technology (FCT, https://ror.org/00snfqn58)  
Multi-Annual Financing Program for R\&D Units.

\bibliographystyle{acm}
\bibliography{08july2025}

\begin{thebibliography}{10}

\bibitem{avila2007simplicity}
{\sc Avila, A., and Viana, M.}
\newblock Simplicity of {L}yapunov spectra: proof of the
  {Z}orich–{K}ontsevich conjecture.
\newblock {\em Acta Mathematica 189\/} (2007), 1--56.

\bibitem{AV10}
{\sc Avila, A., and Viana, M.}
\newblock Extremal {L}yapunov exponents: an invariance principle and
  applications.
\newblock {\em Inventiones mathematicae 181\/} (2010), 115--178.

\bibitem{baladi2000positive}
{\sc Baladi, V.}
\newblock {\em Positive transfer operators and decay of correlations}, vol.~16
  of {\em Advanced Series in Nonlinear Dynamics}.
\newblock World Scientific, Singapore, 2000.

\bibitem{BarralFeng2011}
{\sc Barral, J., and Feng, D.-J.}
\newblock Non-uniqueness of ergodic measures with full hausdorff dimensions on
  a {G}atzouras-{L}alley carpet.
\newblock {\em Nonlinearity 24}, 9 (2011), 2563--2567.

\bibitem{Bar}
{\sc Barreira, L.}
\newblock {\normalfont Thermodynamic formalism and applications to dimension
  theory}.
\newblock {\em Progress in Mathematics, Birkh\"auser/Springer Basel AG, Basel,
  394\/} (2011).

\bibitem{benoist2016random}
{\sc Benoist, Y., and Quint, J.-F.}
\newblock {\em Random Walks on Reductive Groups}, vol.~62 of {\em Ergebnisse
  der Mathematik und ihrer Grenzgebiete. 3. Folge / A Series of Modern Surveys
  in Mathematics}.
\newblock Springer, Berlin, 2016.

\bibitem{BRV18}
{\sc Bessa, M., Rocha, J., and Varandas, P.}
\newblock Uniform hyperbolicity revisited: periodic points and equidimensional
  cycles.
\newblock {\em Dynamical Systems 33}, 4 (2018), 691--707.

\bibitem{Bonatti2003}
{\sc Bonatti, C., Gómez-Mont, X., and Viana, M.}
\newblock Généricité d’exposants de lyapunov non-nuls pour des produits
  déterministes de matrices.
\newblock {\em Annales de l’Institut Henri Poincaré C, Analyse non linéaire
  20}, 4 (2003), 579--624.

\bibitem{bonatti2004lyapunov}
{\sc Bonatti, C., and Viana, M.}
\newblock Lyapunov exponents with multiplicity 1 for deterministic products of
  matrices.
\newblock {\em Ergodic Theory and Dynamical Systems 24}, 5 (2004), 1295--1330.

\bibitem{Bow}
{\sc Bowen, R.}
\newblock {\normalfont Equilibrium States and the Ergodic Theory of Anosov
  Diffeomorphisms. {L}ecture {N}otes in {M}athematics}.
\newblock {\em Springer, Berlin 470\/} (1975).

\bibitem{Bradley05}
{\sc Bradley, R.}
\newblock Basic properties of strong mixing conditions: a survey and some open
  questions.
\newblock {\em Probability Surveys 2\/} (2005), 107--144.

\bibitem{bradley1983mixing}
{\sc Bradley, R.~C.}
\newblock On the $\alpha$-mixing condition for stationary random sequences.
\newblock {\em Transactions of the American Mathematical Society 276}, 1
  (1983), 55--66.

\bibitem{Call2023}
{\sc Call, B., and Park, K.}
\newblock Bernoulli property of subadditive equilibrium states.
\newblock {\em Mathematische Zeitschrift 305}, 56 (2023).

\bibitem{CFH}
{\sc Cao, Y., Feng, D., and Huang, W.}
\newblock The thermodynamic formalism for sub-additive potentials.
\newblock {\em Discrete Cont. Dynam. Sys. 20}, 3 (2008), 639--657.

\bibitem{CPZ}
{\sc Cao, Y., Pesin, Y., and Zhao, Y.}
\newblock Dimension estimates for non-conformal repellers and continuity of
  sub-additive topological pressure.
\newblock {\em Geometric and Functional Analysis 29}, 5 (2019), 1325--1368.

\bibitem{dolgopyat1998decay}
{\sc Dolgopyat, D.}
\newblock On decay of correlations in {A}nosov flows.
\newblock {\em Annals of Mathematics 147}, 2 (1998), 357--390.

\bibitem{falconer94}
{\sc Falconer, K.~J.}
\newblock Bounded distortion and dimension for non-conformal repellers.
\newblock {\em Mathematical Proceedings of the Cambridge Philosophical Society
  115}, 2 (1994), 315–334.

\bibitem{FH}
{\sc Feng, D., and Huang, W.}
\newblock Lyapunov spectrum of asymptotically sub-additive potentials.
\newblock {\em Communications in Mathematical Physics 297}, 1 (2010), 1--43.

\bibitem{Feng11}
{\sc Feng, D.-J.}
\newblock Equilibrium states for factor maps between subshifts.
\newblock {\em Advances in Mathematics 226}, 3 (2011), 2470--2502.

\bibitem{Friedman70}
{\sc Friedman, N., and Ornstein, D.}
\newblock On isomorphism of weak {B}ernoulli transformations.
\newblock {\em Advances in Mathematics 5\/} (1970), 365--394.

\bibitem{Gatzouras1997}
{\sc Gatzouras, D., and Peres, Y.}
\newblock Invariant measures of full dimension for some expanding maps.
\newblock {\em Ergodic Theory and Dynamical Systems 17}, 1 (1997), 147--167.

\bibitem{GouezelStoyanov2019}
{\sc Gou{\"e}zel, S., and Stoyanov, L.}
\newblock Quantitative pesin theory for anosov diffeomorphisms and flows.
\newblock {\em Ergodic Theory and Dynamical Systems 39}, 1 (2019), 159--200.

\bibitem{HP}
{\sc Hasselblatt, B., and Pesin, Y.}
\newblock Chapter 1 - partially hyperbolic dynamical systems.
\newblock In {\em Handbook of Dynamical Systems}, B.~Hasselblatt and A.~Katok,
  Eds., vol.~1 of {\em Handbook of Dynamical Systems}. Elsevier Science, 2006,
  pp.~1--55.

\bibitem{horn1994topics}
{\sc Horn, R.~A., and Johnson, C.~R.}
\newblock {\em Topics in Matrix Analysis}.
\newblock Cambridge University Press, Cambridge, 1994.

\bibitem{KS}
{\sc Kalinin, B., and Sadovskaya, V.}
\newblock Cocycles with one exponent over partially hyperbolic systems.
\newblock {\em Geometriae Dedicata 167}, 1 (2013), 167--188.

\bibitem{lepage1982theoremes}
{\sc Le~Page, {\'E}.}
\newblock Théorèmes limites pour les produits de matrices aléatoires.
\newblock In {\em Probability Measures on Groups}, H.~Heyer, Ed., vol.~928 of
  {\em Lecture Notes in Mathematics}. Springer, Berlin, Heidelberg, 1982,
  pp.~258--303.

\bibitem{Led74}
{\sc Ledrappier, F.}
\newblock Principe variationnel et systèmes dynamiques symboliques.
\newblock {\em Zeitschrift für Wahrscheinlichkeitstheorie und Verwandte
  Gebiete 30\/} (1974), 185--202.

\bibitem{liverani1995decay}
{\sc Liverani, C.}
\newblock Decay of correlations.
\newblock {\em Annals of Mathematics 142}, 2 (1995), 239--301.

\bibitem{Moh22}
{\sc Mohammadpour, R.}
\newblock Lyapunov spectrum properties and continuity of the lower joint
  spectral radius.
\newblock {\em Journal of Statistical Physics 189}, 3 (2022), 23.

\bibitem{MP-uniform-qm}
{\sc Mohammadpour, R., and Park, K.}
\newblock Uniform quasi-multiplicativity of locally constant cocycles and
  applications.
\newblock {\em Studia Mathematica 275}, 1 (2024), 85--98.

\bibitem{mohammadpour-Quas}
{\sc Mohammadpour, R., and Quas, A.}
\newblock Non-unique equilibrium measures and freezing phase transitions for
  matrix cocycles for negative $t$.
\newblock {\em \url{https://arxiv.org/abs/2507.01148}\/} (2025).

\bibitem{Mohammadpour-Varandas}
{\sc Mohammadpour, R., and Varandas, P.}
\newblock Multifractal formalism for {L}yapunov exponents of fiber-bunched
  linear cocycles.
\newblock {\em \url{https://arxiv.org/abs/2503.17045}\/} (2025).

\bibitem{Morris21}
{\sc Morris, I.}
\newblock Totally ergodic generalised matrix equilibrium states have the
  {B}ernoulli property.
\newblock {\em Communications in Mathematical Physics 387}, 2 (2021),
  995--1050.

\bibitem{Ornstein}
{\sc Ornstein, D.~S.}
\newblock On the root problem in ergodic theory.
\newblock In {\em Proceedings of the {S}ixth {B}erkeley {S}ymposium on
  {M}athematical {S}tatistics and {P}robability ({U}niv. {C}alifornia,
  {B}erkeley, {C}alif., 1970/1971), {V}ol. {II}: {P}robability theory\/}
  (1972), Univ. California Press, Berkeley, CA, pp.~347--356.

\bibitem{Oseledets}
{\sc Oseledets, V.~I.}
\newblock A multiplicative ergodic theorem. lyapunov characteristic numbers for
  dynamical systems.
\newblock {\em Transactions of the Moscow Mathematical Society 19\/} (1968),
  197--231.
\newblock Originally published in Russian in Trudy Moskovskogo Matematicheskogo
  Obshchestva.

\bibitem{park2020quasi}
{\sc Park, K.}
\newblock Quasi-multiplicativity of typical cocycles.
\newblock {\em Communications in Mathematical Physics 376}, 3 (2020),
  1957--2004.

\bibitem{Park-Piraino}
{\sc Park, K., and Piraino, M.}
\newblock Transfer operators and limit laws for typical cocycles.
\newblock {\em Communications in Mathematical Physics 389\/} (2022),
  1475--1523.

\bibitem{Piraino20}
{\sc Piraino, M.}
\newblock The weak {B}ernoulli property for matrix {G}ibbs states.
\newblock {\em Ergodic Theory and Dynamical Systems 40}, 8 (2020),
  2219--–2238.

\bibitem{ruelle2004thermodynamic}
{\sc Ruelle, D.}
\newblock {\em Thermodynamic Formalism: The Mathematical Structures of
  Equilibrium Statistical Mechanics}, 2nd~ed.
\newblock Cambridge Mathematical Library. Cambridge University Press,
  Cambridge, 2004.

\bibitem{Rush}
{\sc Rush, T.}
\newblock On the superadditive pressure for 1-typical, one-step, matrix-cocycle
  potentials.
\newblock {\em Communications in Mathematical Physics 405\/} (2024), 251.

\bibitem{VV}
{\sc Varandas, P., and Viana, M.}
\newblock Existence, uniqueness and stability of equilibrium states for
  non-uniformly expanding maps.
\newblock {\em Annales de l'Institut Henri Poincaré C, Analyse non linéaire
  27}, 2 (2010), 555--593.

\bibitem{viana2016foundations}
{\sc Viana, M., and Oliveira, K.}
\newblock {\em Foundations of Ergodic Theory}, vol.~151 of {\em Cambridge
  Studies in Advanced Mathematics}.
\newblock Cambridge University Press, Cambridge, 2016.

\bibitem{Walters}
{\sc Walters, P.}
\newblock An introduction to ergodic theory.
\newblock {\em Springer New York, NY\/} (1975).

\end{thebibliography}

\end{document}